\definecolor{nb}{rgb}{.6, .176, 1}
\definecolor{sienna}{rgb}{1, 0, 0}
\definecolor{darkgreen}{rgb}{0, .5, 0}
\numberwithin{equation}{section}
\newtheorem{theorem}{Theorem}[section]
\newtheorem{proposition}[theorem]{Proposition}
\newtheorem{lemma}[theorem]{Lemma}
\newtheorem{example}[theorem]{Example}
\newtheorem{remark}{Remark}
\newcommand{\GG}{\mbox{${\mathcal G}$}}
\newcommand{\FF}{\mbox{${\mathcal F}$}}
\newcommand{\EE}{\mbox{${\mathcal E}$}}
\newcommand{\I}{\mbox{${\mathbb I}$}}
\newcommand{\vv}{\mbox{\bf v}}
\DeclareMathOperator{\Tr}{Tr}
 \newcommand{\R}{\mathbb{R}}
 \newcommand{\N}{\mathcal{N}}
 \newcommand{\A}{\textbf{A}}
 \newcommand{\D}{\textbf{D}}
\newcommand{\bigzero}{\mbox{\normalfont\Large\bfseries 0}}
\newcommand{\rvline}{\hspace*{-\arraycolsep}\vline\hspace*{-\arraycolsep}}
 \newcommand{\1}{\mathbf{1}}
  \newcommand{\0}{\mathbf{0}}
\title{Urns with Multiple Drawings and Graph-Based Interaction}
\author[1]{Yogesh Dahiya\thanks{ph20002@iisermohali.ac.in}}
\author[2]{Neeraja Sahasrabudhe\thanks{neeraja@iisermohali.ac.in}}
\affil[1]{Indian Institute of Science Education and Research Mohali (IISERM), India}
\affil[2]{Indian Institute of Science Education and Research Mohali (IISERM), India}
\date{} % Empty argument to remove the date
\begin{document}
\maketitle

\begin{abstract}
Consider a finite undirected graph and place an urn with balls of two colours at each vertex. At every discrete time step, for each urn, a fixed number of balls are drawn from that same urn with probability $p$, and from a randomly chosen neighbour of that urn with probability $1-p$. Based on what is drawn, the urns then reinforce themselves or their neighbours. For every ball of a given colour in the sample, in case of P\'olya-type reinforcement, a constant multiple of balls of that colour is added while in case of Friedman-type reinforcement, balls of the other colour are reinforced. These different choices for reinforcement give rise to multiple models. In this paper, we study the convergence of the fraction of balls of either colour across urns for all of these models. We show that in most cases the urns synchronize, that is, the fraction of balls of either colour in each urn converges to the same limit almost surely. A different kind of asymptotic behaviour is observed on bipartite graphs. We also prove similar results for the case of finite directed graphs. \color{black}
\end{abstract}

\

\noindent {\bf Keywords} - Urn processes, Synchronization, Stochastic approximation, Bipartite graphs, Multiple drawings % insert keywords separated by a semicolon

\

%\ams{60K35}{60F05; 90B15} % insert the primary Maths Subject Classification number in the first bracket

\section{Introduction}\label{sec1}

The P\'olya urn process is widely used in applications across mathematics, physics, economics and engineering. Since its introduction in 1923 by George P\'olya, and further generalization by Bernard Friedman, the study of urn processes has given rise to a large body of work. More recently, there has been work on urns with multiple or even infinite colours \cite{MR3780393, MR3654806} and urns with multiple drawings \cite{MR3666709, MR3915423, MR2203815, MR3126664}. In \cite{MR3780393}, stochastic approximation techniques are used to prove convergence results and central limit theorems (CLTs) for an urn process with multiple drawings containing balls of multiple colours. While the authors mainly consider a balanced urn and prove convergence results for the fraction of balls of each colour, some partial results are obtained for a two-colour unbalanced urn as well. In \cite{MR2203815,MR3126664}, the authors consider a two-colour urn model (say, with white and black colour balls) wherein at each time step, a constant number of balls is drawn from the urn, and the reinforcement is done depending upon the composition of the balls of white and black colour in the drawn sample. While in \cite{MR2203815}, for every white/black ball in the sample, a fixed constant number of white/black balls are added back to the urn, in \cite{MR3126664}, the opposite kind of reinforcement takes place, that is, for every white/black ball in the sample, a fixed constant number of black/white balls are added back to the urn. In \cite{MR3126664}, the first and second moments of the fraction of white balls are calculated, along with an asymptomatic analysis of the higher moments of the number of white balls in the urn centred around its mean, and in \cite{MR2203815} results are obtained pertaining to the distribution of the limiting fraction of white balls in the urn.

The study of urn models has also been extended in the direction of multiple urns with dependent reinforcement processes; however, the focus has been on urn processes where only one ball is drawn from each urn at every time step. In \cite{MR3217785, crimaldi2016fluctuation}, the authors consider a system where the underlying network for interaction is a finite complete graph. At each time step, for each urn, a ball is sampled from that urn with probability $p$ or from the super urn (formed by combining all the urns of the system) with probability $1-p$. The reinforcement is of P\'olya-type and the authors prove that the urns synchronize almost surely, that is, the fraction of balls of either colour converges almost surely to the same limit across all the urns. In \cite{crimaldi2016fluctuation}, CLTs are proved and it is shown that the rate of convergence and asymptotic variance depend on the parameter $p$. Building on this, in \cite{aletti2017interacting, sahasrabudhe2016synchronization}, the authors study  synchronization and fluctuations of a similar interacting Friedman urn system. In \cite{kaur2023interacting}, the authors study a more general model with graph-based nearest neighbour interaction and a general reinforcement scheme that includes P\'olya and Friedman-type reinforcements as special cases. They show that under certain conditions on the graph structure and the reinforcement type, the fraction of balls of either colour converges almost surely to a deterministic limit. Interacting urns on finite graphs have also been studied in  \cite{MR3269167, MR3346459, MR2079914}. There is also some recent work in the direction of generalizing the study of interacting urns to interacting reinforced random walks and/or interacting reinforced stochastic processes over networks (see \cite{crimaldi2019synchronization, 10.1214/17-AAP1296, 10.3150/18-BEJ1092}).

In this paper, we study a two-colour interacting urn process that combines the \lq multiple drawings' aspect with graph-based interaction. We consider urns, containing balls of two colours, placed at the vertices of a finite undirected graph. For each urn, a fixed number of balls are drawn from that same urn with probability $p$ (for a fixed $p \in [0, 1]$) and from a randomly chosen neighbour of that urn with probability $1-p$. During the reinforcement phase, balls can be reinforced in a variety of ways: an urn can reinforce itself, it can reinforce all its neighbours, or it can reinforce itself as well as its neighbours. Further, as in \cite{MR2203815,MR3126664}, we classify the two types of reinforcement schemes as P\'olya-type and Friedman-type. When the reinforcement scheme is  P\'olya-type, for every white/black ball in the sample, $C > 0$ number of white/black balls are added. In contrast, in the case of Friedman-type reinforcement scheme, white reinforces black and black reinforces white. By including interaction with neighbours during the sampling as well as the reinforcement step, we get a dependence on the square of the scaled adjacency matrix which makes this model different from those studied earlier. This can further be generalized by studying sampling and reinforcement that goes beyond nearest neighbour influence.

%The urn processes that we study here can be thought of as models that study the long term effects of interaction between agents holding opinions in a spectrum (fraction of white balls being close to $0$ or $1$ would denote extremism). %

% \color{black} 
 
We study the effects of the graph structure, the interaction parameter $p$ and the reinforcement type on the asymptotic behaviour of the urn system. In this paper, we mainly focus on Friedman-type reinforcement and obtain convergence, synchronization and fluctuation results for the fraction of balls of either colour in the urns. However, we also obtain convergence results for the P\'olya-type reinforcement scheme under the added assumption of the regularity of the underlying graph and an assumption on the initial configuration of the urns. %We say that two urns synchronize if the fraction of white balls in each of those urns converges to the same limit almost surely. 
For Friedman-type reinforcement, universal synchronization of urns to a deterministic limit of $1/2$ is observed when either $p$ lies strictly between $0$ and $1$, or when the reinforcement scheme is such that each urn as well as its neighbours is reinforced. In some cases (when $p=0$ or $p=1$), only partial synchronization takes place if the underlying graph is bipartite. By partial synchronization, we mean that urns belonging to a common partition in the bipartite graph synchronize with each other. In this case, we show that for Friedman-type reinforcement, the fraction of balls of white colour in each urn belonging to a particular partition converges to the same limit almost surely. Further, if the limits are $Z_1^*$ and $Z_2^*$ on the two partitions, then $Z^*_1+Z^*_2=1$. The convergence result for bipartite graphs is obtained under certain constraints on the graph and initial configuration of the urns. We observe that the \lq multiple drawings' aspect does not have any effect on the convergence and synchronization results; however, the limiting variance in the fluctuation theorems depends on both the size of the drawn sample and the interaction parameter $p$. \color{black}
 
 The organization of the paper is as follows. In Section \ref{Sec:prelim} we introduce the urn process and various models. In section \ref{Sec:conv} we state and prove the convergence of the fraction of balls of either colour in each urn for various models using the theory of stochastic approximation. We show that for a large class of models, the limit is $1/2$ almost surely. In section~\ref{Sec:FTSR}, we study the special case of Friedman-type self-reinforcement for $p=0$ on bipartite graphs and under some assumptions prove that the fraction of balls of either colour in each urn converges to the same limiting fraction almost surely provided the urns are placed at vertices of the same partition of the graph. The behaviour of the limit, in terms of its distributional properties etc., is not known. In section~\ref{Sec:Fluc} we state and prove the fluctuation results, and show that appropriately scaled differences between the fraction of balls of either colour and the limit $1/2$ converge to a Gaussian distribution in most cases. Finally, in section~\ref{Sec:Dir} we discuss the behaviour of some of these models on directed networks. We briefly recall some of the stochastic approximation results used in the paper in the \hyperref[appn]{Appendix}. Throughout the paper, we write $f(t) \sim g(t)$ to mean $f(t) = \mathcal{O} (g(t))$. For a matrix $B(t) \in \mathbb{R}^{n \times m}$, by $B(t) \sim g(t)$ we mean that every entry of the matrix has the same order, that is, $B_{ij}(t) \sim g(t)$ for  $1 \leq i \leq n$ and $1 \leq j \leq m$. $\| \cdot \|$ refers to the standard Euclidean norm, and for $k>0$, ${\bf 0 }$ denotes a $k$-dimensional row vector of zeroes and $\0_{k \times k}$ denotes a zero matrix with $k$ rows and $k$ columns. \color{black}

\section{Urn Process and Reinforcement Schemes} \label{Sec:prelim}
We consider a connected undirected graph $\mathcal{G}=([N], \EE)$ on $N$ vertices, where $[N]$ denotes the set $\{1, 2, \ldots, N \}$ and $\EE \subseteq [N] \times [N]$ denotes the set of edges. At each vertex, we place an urn with a nonzero number of  white and black balls. The configuration of each urn (the number of balls of white and black colour) is updated simultaneously at every discrete time-step. 

For a vertex $v$, we define its neighbourhood as $\N(v) = \{ w \in [N] : (w, v) \in \EE \}$. Define $d_v \coloneqq | \N(v)|$ as the degree of the vertex $v$. We assume that the graph is connected; therefore, $d_v > 0 \; \forall v \in [N]$.  Let $\A$ denote the adjacency matrix of the graph $\GG$, and let $\D$ denote the diagonal matrix consisting of the degrees of the vertices on the diagonal, that is, $\D_{ij} = d_i$ for $i = j$ and $0$ otherwise.  For a matrix $M$, denote by $\Tr(M)$ the trace of $M$. Denote by $\1$ an $N-$dimensional row vector with each entry being $1$, and for an event $A$, let  $\I_{A}$ denote the indicator function of $A$. Throughout this paper, whenever the graph $\mathcal{G}$ is bipartite, the two disjoint partitions of the graph are denoted by $V$ and $W$, with $V=\{1, \dots, m\}$ and $W=\{m+1, \dots, N\}$.

The urn process is a two-phase process, consisting of sampling and reinforcement. At each step, each urn samples from itself or from one of its randomly chosen neighbours. The reinforcement process can be characterized by two types of dependence: (1) neighbourhood dependence: reinforcement in each urn depends on other urns in its neighbourhood, and (2) replacement scheme: reinforcement depends on which colour reinforces which colour. We study three types of neighbourhood dependence.

\begin{enumerate}[(i)]
\item Self reinforcement: each urn reinforces itself.
\item Neighbour reinforcement: each urn reinforces its neighbours in the graph.
\item Self and neighbour reinforcement: each urn reinforces itself as well as its neighbours.
\end{enumerate}
Further, two types of reinforcement schemes are studied.
\begin{enumerate}
\item P\'olya-type reinforcement scheme: each colour reinforces itself. That is, for every ball of colour white in the sample, $C$ balls of the same colour are reinforced.
\item Friedman-type reinforcement scheme: each colour reinforces the opposite colour, and not itself. That is, for every ball of colour white in the sample, $C$ balls of black colour are reinforced.
\end{enumerate}

\

Note that due to the multiple drawing aspect of sampling, the replacement matrix of the P\'olya-type reinforcement described above is not a diagonal matrix as in the classical P\'olya urn model. The above terminology is consistent with \cite{MR2203815,MR3126664} where multiple drawing urn models have been studied.

We now describe the process by which the configuration of urns evolves. Fix $p \in [0, 1]$. At each discrete time step $t$, the following process takes place.
\begin{itemize}
\item {\bf Sampling} For each urn, $s$ number of balls are drawn simultaneously and  uniformly at random (with or without replacement) from that same urn with probability $p$ and from a randomly chosen urn from the neighbourhood  with probability $1-p$. The sampling event for each urn is assumed to be independent of sampling from every other urn. As we shall see later, the asymptotic behaviour of the urns (convergence of the fraction of balls of a particular colour) is the same regardless of whether sampling is done with replacement or without replacement of balls.

\

%Note that sampling from a randomly chosen neighbour with probability $1-p$ is different from sampling from neighbours with probability $1-p$. The latter way of sampling leads to a rather complicated model in case of multiple drawing and is not addressed in this paper.

 \color{black}
\item {\bf Reinforcement} The $N$ samples, each consisting of $s$ balls, are collated.  Each sample corresponds to the urn from which the sample was collected from.  We study the following types of reinforcement processes.
\begin{enumerate}
\item {\bf P\'olya-type reinforcement} 
\begin{enumerate}
\item P\'olya-type self reinforcement (PTSR) model: For every white/black  ball in the drawn sample, $C$ white/black balls are added to that same urn from which the sample was collected.\color{black}
\item P\'olya-type neighbour reinforcement (PTNR) model: For every white/black ball in the drawn sample, $C$ white/black balls are added to all the neighbours of the urn from which the sample was collected. 
\item P\'olya-type self and neighbour reinforcement (PTSNR) model: For every white/black ball in the drawn sample, $C$ white/black balls are added to that urn as well as to all the neighbours of that urn.
\end{enumerate}
\item {\bf Friedman-type reinforcement.} 
\begin{enumerate}
\item Friedman-type self reinforcement (FTSR) model: For every white/black ball in the drawn sample, $C$ black/white  balls are added to that same urn to which the sample corresponds.
\item Friedman-type neighbour reinforcement (FTNR) model: For every white/black ball in the drawn sample, $C$ black/white  balls are added to all the neighbours of that urn.
\item Friedman-type self and neighbour reinforcement (FTSNR) model: For every white/black ball in the drawn sample, $C$ black/white balls are added to that urn as well as to all the neighbours of that urn.
\end{enumerate}

\end{enumerate}
\end{itemize}
Observe that in the case of FTSR with $p=0$, for each urn the balls are sampled from a randomly chosen neighbour and the urn is reinforced. In the case of FTNR with $p=1$, balls are sampled from each urn and the neighbours are reinforced. For large time $t$, these two are essentially the same processes, and therefore, they exhibit the same asymptotic behaviour. The same holds for PTSR with $p=0$ and PTNR with $p=1$.

The reinforcement in each urn is balanced. That is, a constant number of balls is added to each urn at every time step. Let $T_t (i)$ denote the total number of balls in the $i^{th}$ urn at time $t$. Observe that $T_t (i)$ is deterministic, and we have $T_{t+1}(i) = T_t(i) + Cs \omega_i$, where for $1 \leq i \leq N$,
\begin{equation*} \omega_i
 =  \begin{cases}   1& \  \text{in case of self reinforcement (PTSR and FTSR models)}, \\
                    d_i & \ \text{in case of neighbour reinforcement (PTNR and FTNR models)}, \\
 d_i +1 & \  \text{in case of self and neighbour reinforcement (PTSNR and FTSNR models)}.
\end{cases}
\end{equation*}
 While the linear increment of the total number of balls in each urn is essential for our argument, the results hold for cases when $T_t (i)$ is random, but such that $\frac{T_t (i)}{t}$ converges to a fixed constant. This can happen, for instance, when instead of reinforcing all the neighbours, a random neighbour is chosen and reinforced. This case is discussed briefly in Remark~\ref{rem:random}. \color{black}

Let $\Omega$ denote the diagonal matrix consisting of $\omega_i$'s on the diagonal, and let $W_t(i)$ and $Z_t(i)$ denote the number and fraction of white balls in the $i^{th}$ urn at time $t$, respectively. Define $Z_t:= (Z_t(1), \ldots, Z_t(N)) \in \mathbb{R}^N$.

\

Let $Y_{t+1}(i)$ be the number of white balls sampled from the $i^{th}$ urn  at time $t$. Then, the evolution of the number of white balls in the $i^{th}$ urn (for $1 \leq i \leq N)$ can be described as follows.
\begin{enumerate}
\item P\'olya-type: $W_{t+1}(i) = W_t(i) + C \left(  \eta Y_{t+1}(i)+\kappa \sum\limits_{j \in N(i)}  Y_{t+1}(j)\right) $,
\item Friedman-type: $W_{t+1}(i) = W_t(i) + C \left( \eta(s- Y_{t+1}(i)) + \kappa \sum\limits_{j \in N(i)} (s- Y_{t+1}(j)) \right)$.
\end{enumerate}
where $\kappa$ and $\eta$ depend on the kind of urn reinforcement in question. More precisely, we have
\begin{equation*} \kappa
 =  \begin{cases}   0 & \  \text{in case of self reinforcement (PTSR and FTSR models),} \\
                    1& \ \text{in case of neighbour reinforcement (PTNR and FTNR models),}\\
 1 & \  \text{in case of self and neighbour reinforcement (PTSNR and FTSNR models).}\\                   
\end{cases}
\end{equation*}

\begin{equation*} \eta
 =  \begin{cases}   1& \ \text{in case of self reinforcement (PTSR and FTSR models),} \\
                    0 & \ \text{in case of neighbour reinforcement (PTNR and FTNR models),}\\
 1 & \  \text{in case of self and neighbour reinforcement (PTSNR and FTSNR models).}\\                   
 
\end{cases}
\end{equation*}
Define a filtration $\{\FF_t\}_{t\geq 0}$ by $\FF_t = \sigma( Y_s(i) : i \in [N] \text{ and } s \leq t )$. For $1 \leq j \leq N$, and for $ 0 \leq k \leq s $,
\begin{equation} \label{Y}
 P( Y_{t+1}(j) = k| \FF_t)={s \choose k} \left\{ p Z_t(j)^k (1-Z_t(j))^{s-k} + \frac{1-p}{d_j} \sum\limits_{l \in N(j)}  Z_t(l)^k (1-Z_t(l))^{s-k}\right\},
 \end{equation}
when the sampling is done with replacement. In case of sampling without replacement, we have
\begin{equation} \label{Y_without replacement}
 P( Y_{t+1}(j) = k| \FF_t)=  p \frac{{W_t(j)\choose k} {{T_t(j)-W_t(j)}\choose {s-k}}}{{T_t(j)\choose s}} + \frac{1-p}{d_j} \sum\limits_{l \in N(j)}  \frac{{W_t(l)\choose k} {{T_t(j)-W_t(j)}\choose {s-k}}}{{T_t(j)\choose s}}.
 \end{equation}
Note that for any $j \in [N]$, the sequence $\{Y_t(j)\}_{t\geq 0}$ is not exchangeable. For $j \in [N]$, define 
$$\chi_t(j) =  \begin{cases} 
 Y_{t}(j) & \text{in case of P\'olya-type reinforcement}, \\
 s-Y_t(j) & \text{in case of Friedman-type reinforcement}.
\end{cases}$$

\

\noindent For any $i \in [N]$, we have
\begin{eqnarray} \label{Zi}
Z_{t+1}(i) &=& \frac{W_{t+1}(i)}{T_{t+1}(i)} \nonumber \\
&=& \frac{1}{T_{t+1}(i)} \left( Z_t(i)T_t(i) + C \left( \eta \chi_{t+1}(i)+ \kappa \sum\limits_{j \in N(i)} \chi_{t+1}(j) \right)  \right).
\end{eqnarray}
This implies
\begin{eqnarray}\label{ExpZi}
E[Z_{t+1}(i) | \FF_t] &=& \frac{1}{T_{t+1}(i)} \left[ Z_t(i) T_t(i) + C\left(\eta E[\chi_{t+1}(i) | \FF_t]+ \kappa \sum\limits_{j \in N(i)} E[\chi_{t+1}(j) | \FF_t]\right)  \right]. \nonumber 
\end{eqnarray}
Using \eqref{Y} and \eqref{Y_without replacement}, we get that for $i \in [N]$,
\begin{equation} \label{Expchi}
E[\chi_{t+1}(i) | \FF_t] = s \times \begin{cases}  p Z_t(i) + \frac{1-p}{d_i} \sum\limits_{l \in N(i)} Z_t(l) & \text{for P\'olya-type reinforcement,}  \\
1 - p Z_t(i) - \frac{1-p}{d_i} \sum\limits_{l \in N(i)} Z_t(l) & \text{for Friedman-type reinforcement}. 
\end{cases}
\end{equation}
The asymptotic behaviour of any urn $i \in [N]$ depends on $E[\chi_{t+1}(i) | \FF_t]$, which is the expected reinforcement in that urn at that time given the past. Since \eqref{Expchi} turns out to be the same regardless of whether sampling is done with replacement of balls or without replacement, all our results hold for both kinds of sampling unless stated otherwise. \color{black} We now write the recursion for $Z_t$ as an $N$-dimensional stochastic approximation scheme (see \hyperref[appn]{Appendix}). Using \eqref{Zi}, for $i \in [N]$ we have
\begin{eqnarray} \label{recur}
 Z_{t+1}(i) &=& \frac{T_t(i)}{T_{t+1}(i)} Z_t(i) + \frac{C}{T_{t+1}(i)} \left( \eta \chi_{t+1} (i)  +  \kappa \sum\limits_{j \in N(i)} \chi_{t+1} (j) \right)   \\
 &=& Z_t(i) + \frac{C}{T_{t+1}(i)}\left( \eta \chi_{t+1} (i) - s \omega_i Z_t(i)  +  \kappa \sum\limits_{j \in N(i)} \chi_{t+1} (j) \right)  \nonumber \\ %\label{d}\\ 
  &=& Z_t(i) + \frac{1}{t+1} \left[ \frac{1}{s \omega_i} \left( \eta \chi_{t+1} (i)  +  \kappa \sum\limits_{j \in N(i)} \chi_{t+1} (j) \right) - Z_t(i)\right] + \epsilon_t(i),\nonumber
  \end{eqnarray}
  where $\epsilon_t(i) = \left(\frac{Cs\omega_i}{T_{t+1}(i)} - \frac{1}{t+1}\right)\left[ \frac{1}{s \omega_i} \left( \eta \chi_{t+1} (i)  +  \kappa \sum\limits_{j \in N(i)} \chi_{t+1} (j) \right) - Z_t(i)\right]  \to 0$ as $t \to \infty$ (since \\
$T_{t+1}(i)= T_0(i) + Cs \omega_i (t+1)$).  %and $\kappa \sum\limits_{j \in N(i)} \Delta M_{t+1}(j)$ is a bounded square-integrable martingale difference sequence. 
 For $j \in [N]$, define by $\Delta M_{t+1}(j) = \chi_{t+1}(j) - E[\chi_{t+1}(j) | \FF_t]$ the martingale difference. Then,
\begin{eqnarray*}
 Z_{t+1} (i) &=& Z_t(i) + \frac{1}{s\omega_i(t+1)}  \left( \eta E[\chi_{t+1}(i) | \FF_t] - s \omega_i Z_t(i) +  \kappa \sum\limits_{j \in N(i)} E[\chi_{t+1}(j) | \FF_t] \right)  \\
 &+&  \frac{1}{s \omega_i(t+1)} \left( \eta \Delta M_{t+1}(i) +\kappa \sum\limits_{j \in N(i)} \Delta M_{t+1}(j)   \right) + \epsilon_t(i).
 \end{eqnarray*}

\

We denote the row vectors $(\chi_t(1), \dots, \chi_t(N)), (\Delta M_t(1), \dots, \Delta M_t(N))$ and $(\epsilon_t(1), \dots, \epsilon_t(N))$ by $\chi_t$, $ \Delta M_t$ and $\epsilon_t$, respectively. Then, we can write the $N$-dimensional recursion for $Z_t$ as
 \begin{eqnarray}\label{recur_vector}
 Z_{t+1} &= & Z_{t} + \frac{1}{s(t+1)}  \left(  E[\chi_{t+1}| \FF_t]  \left(\eta I + \kappa \A\right){\Omega}^{-1}- s Z_t + \Delta M_{t+1} (\eta I + \kappa \A){\Omega}^{-1}\right) + \epsilon_t \nonumber \\
 & = & Z_{t} + \frac{1}{t+1}\left( h(Z_{t})+ \frac{\Delta M_{t+1}}{s} (\eta I + \kappa \A){\Omega}^{-1} \right) + \epsilon_t,  
 \end{eqnarray}
where $h(Z_t)= \frac{1}{s} \left(E[\chi_{t+1}| \FF_t]  \left(\eta I + \kappa \A \right){\Omega}^{-1} \right)-  Z_t $ is a Lipschitz function of $Z_t$ and $\epsilon_t \to 0$ as $t \to \infty$. Note that the matrix $\Omega$ equals $I$ in case of self reinforcement, $\D$ for  neighbour reinforcement, and $\D+I$ for the case of self and neighbour reinforcement. Using the appropriate value of $\kappa$ and $\eta$ based on the kind of reinforcement, the fact that $\1 \A \D^{-1}=\1$ (since $\A \D^{-1}$ is column stochastic), and using \eqref{Expchi}, we get 
\begin{equation} \label{h functions}
h(Z_t)  =  \begin{cases} (1-p)Z_t (\A \D^{-1}-I)   & \text{for the PTSR model,}  \\
 Z_t(p\A \D^{-1} +(1-p)(\A \D^{-1})^2-I)& \text{for the PTNR model,} \\
 Z_t\left[(pI + (1-p)\A \D^{-1})(\A+I)(I+\D)^{-1} -I \right] & \text{for the PTSNR model,}  \\
 \1 -Z_t((1+p)I + (1-p)\A \D^{-1}) & \text{for the FTSR model,}  \\
  \1 -Z_t(I + p\A \D^{-1}+ (1-p)(\A \D^{-1})^2 ) & \text{for the FTNR model,} \\
 \1- Z_t\left[(pI + (1-p)\A \D^{-1})(\A+I)(I+\D)^{-1} +I \right]  & \text{for the FTSNR model.}  
\end{cases}
 \end{equation}
 As mentioned before, the cases FTSR with $p=0$ and FTNR with $p=1$ (similarly, PTSR with $p=0$ and PTNR with $p=1$) do not require a separate analysis of the asymptotic behaviour. This is further clear from the fact that the $h(\cdot)$ function for these cases is identical. 
 
The stochastic approximation theory (see \hyperref[appn]{Appendix}) can be used to analyse the limiting behaviour of $Z_t$. In order to find the points of convergence for $Z_t$, we need to find the stable limit points of the O.D.E. $\dot{z}_t = h(z_t)$, which are a subset of  $\{z: h(z) = \0 \}$ . To determine the set of zeroes of $h$, we need to study the spectral properties of the matrices involved in $h(\cdot)$ for various urn models described above. In the next section, using the spectral analysis of the matrices involved and using the stochastic approximation theory, we study the asymptotic behaviour of $Z_t$ and characterize the limits for both P\'olya-type and Friedman-type reinforcement schemes.

%%%%%%%%%%%%%%%%%%%%%%%%%%%%%%%%%%%%%%%%%%%%%%%%%%%%%%%%%%%%%%%%%%%%%%%%%%%%%%%%%%%%%%%%%%%%%%%%%%%%%%%%%%%%%%%%%%%%%%%%%%%%%%%%%%%%%%%%%%%%%%%%%%%%%%%%%%%%%%%%%%%%%%%%%%%%%%%%%%%%%%%%%%%%%%%%%%%%%%%%%%%%%%%%%%%%%%%%%%%%%%%%%%%%%%%%%%%%%%%%%%%%%%%%%%%%%%%%%%%%%%%%%%%%%%%%%%%%%%%%%%%%%%%%%%%%%%%%%%%%%%%%%%%%%%%%         Convergence Results    %%%%%%%%%%%%%%%%%%%%%%%%%%%%%%%%%%%%%%%%%%%%%%%%%%%%%%%%%%%%%%%%%%%%%%%%%%%%%%%%%%%%%%%%%%%%%%%%%%%%%%%%%%%%%%%%%%%%%%%%%%%%%%%%%%%%%%%%%%%%%%%%%%%%%%%%%%%%%%%%%%%%%%%%%%%%%%%%%%%%%%%%%%%%%%%%%%%%%%%%%%%%%%%%%%%%%%%%%%%%%%%%%%%%%%%%%%%%%%%%%%%%%%%%%%%%%%%%%%%%%%%%%%%%%%%%%%%%%%%%%%%%%%%%%%%%%%%%%%%%%%%%%%%%%%

 \section{ Convergence Results} \label{Sec:conv}
We begin by showing that for the Friedman-type reinforcement scheme, whenever there is a unique limit point of the O.D.E. $\dot{z}_t = h(z_t)$, it is stable. In other words, we show that the corresponding Jacobian matrices (denoted by $\frac{\partial h}{\partial z}$) are such that their eigenvalues have non-positive real parts. %Since $h$ is a linear function of $z$, the Jacobian is independent of $z$ and we denote it by $J$. 

\

 \begin{lemma}\label{stable}
For Friedman-type reinforcement, the real part of each eigenvalue of $\frac{\partial h}{\partial z}$ is non-positive, where $h(\cdot)$ is as in \eqref{h functions}.
 \end{lemma}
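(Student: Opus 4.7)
My plan is to exploit the common affine structure of all three Friedman-type expressions in \eqref{h functions}. In each of the FTSR, FTNR, and FTSNR cases, $h$ can be written as $h(z) = \1 - zM$ for an appropriate $N \times N$ matrix $M$; hence the Jacobian $\partial h/\partial z$ equals $-M^{\top}$ (treating $z$ as a row vector), and its eigenvalues are the negatives of the eigenvalues of $M$. It therefore suffices to show, for each of the three Friedman-type models, that every eigenvalue of $M$ has non-negative real part.

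The key observation I would use is that in each case $M = I + N$ where $N$ is column-stochastic, i.e.\ entrywise non-negative with every column summing to $1$. For FTSR one takes $N = pI + (1-p)\A\D^{-1}$; for FTNR, $N = p\A\D^{-1} + (1-p)(\A\D^{-1})^{2}$; and for FTSNR, $N = (pI + (1-p)\A\D^{-1})(\A+I)(I+\D)^{-1}$. Non-negativity of entries is evident in each expression. Column-stochasticity follows from the identities $\1\A\D^{-1} = \1$ (already noted in the excerpt) and $\1(\A+I)(I+\D)^{-1} = \1$, together with the standard facts that convex combinations and products of column-stochastic matrices are again column-stochastic.

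Once $M = I + N$ with $N$ column-stochastic is in place, the conclusion is immediate: since $N^{\top}$ is row-stochastic, every eigenvalue $\mu$ of $N$ lies in the closed unit disk by Gershgorin, whence $\mathrm{Re}(\mu) \ge -1$. Therefore every eigenvalue of $M$ has real part in $[0,2]$, and every eigenvalue of the Jacobian $-M^{\top}$ has real part in $[-2,0]$, which is exactly what is claimed.

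The only non-routine step is the FTSNR case. Unlike FTSR and FTNR, here $M$ is not a polynomial in $\A\D^{-1}$ alone, and since $\A\D^{-1}$ and $(\A+I)(I+\D)^{-1}$ need not commute, $M$ is in general not similar to a symmetric matrix, so its eigenvalues may legitimately be complex and one cannot reduce to a real-spectrum computation. What rescues the argument is precisely that column-stochasticity is preserved under multiplication, so the uniform Gershgorin bound still applies without ever having to diagonalize $M$.
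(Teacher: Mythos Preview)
Your proposal is correct and follows essentially the same route as the paper: both write the Jacobian as $-(I+N)$ (up to a harmless transpose) with $N$ column-stochastic in each of the three Friedman cases, verify column-stochasticity via $\1\A\D^{-1}=\1$, $\1(\A+I)(I+\D)^{-1}=\1$, and closure under convex combinations and products, and then use the spectral-radius bound $|\mu|\le 1$ for stochastic matrices (you via Gershgorin, the paper directly) to conclude $-2\le\mathrm{Re}(\lambda)\le 0$. Your closing remark on why FTSNR requires no more than this is a nice clarification but does not constitute a different argument.
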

 
\begin{proof} Since $h$ is a linear function of $z_t$, the Jacobian is independent of $z_t$ and we have
\begin{equation*}\label{Jacobian}
\frac{\partial h}{\partial z}  =  \begin{cases}   - I - (p I + (1-p)\A \D^{-1}) & \text{for the FTSR model,}  \\
-I -(p \A \D^{-1} + (1-p)(\A \D^{-1})^2 ) & \text{for the FTNR model,}\\
-I -(pI + (1-p)\A \D^{-1})(\A+I)(I+\D)^{-1}  & \text{for the FTSNR model}.
\end{cases}
\end{equation*}
Note that the matrices $\A\D^{-1}$, $(\A+I) (I+\D)^{-1}$ and  $p I + (1-p)\A \D^{-1}$ are column stochastic, and since the product of two column stochastic matrices is column stochastic, the matrices $p \A \D^{-1} + (1-p)(\A \D^{-1})^2$ and $(pI + (1-p)\A \D^{-1})(\A+I)(I+\D)^{-1}$ are also column stochastic. So, it is enough to show that the lemma is satisfied for matrices of the form $-I-X$ where $X$ is column stochastic.
 Let $\lambda$ be an arbitrary eigenvalue of $-I-X$. Then $-\lambda -1$ is an eigenvalue of the matrix  $X$, which is column stochastic. Therefore, $| \lambda + 1 | \leq 1$. This implies $ (Re(\lambda) + 1)^2 + (Im(\lambda))^2 \leq 1$. Hence, $-2 \leq Re(\lambda) \leq 0$. 
\end{proof}

\noindent  Using arguments similar to those in Lemma 2.2 of \cite{MR1918746} for the case of an undirected graph, we get the following. 

\

\begin{lemma} \label{ADinverse} Suppose $\GG$ is a connected undirected graph. The matrix $I+\A \D^{-1}$ has the following properties.
\begin{enumerate}
\item All eigenvalues of $I+\A \D^{-1}$ are real  and non-negative. 
\item $Rank(I+\A \D^{-1})$ equals $N$, if the graph $\GG$ is non-bipartite, and $N-1$ if $\GG$ is bipartite.
\item Assume that the matrix $I+\A \D^{-1}$ is diagonalizable. If the graph $\GG$ is bipartite, then the algebraic multiplicity of the eigenvalue $0$ is one.
\end{enumerate}
\end{lemma}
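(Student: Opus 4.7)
The plan is to symmetrize. Since $A$ is symmetric, $AD^{-1}$ is similar (via $D^{1/2}$) to the real symmetric matrix $S := D^{-1/2} A D^{-1/2}$, so $I + AD^{-1}$ shares its spectrum with $I + S$ and is automatically diagonalizable. This single observation is what I would lean on throughout, and it essentially reduces the three parts of the lemma to a short spectral computation for $S$.

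For part 1, similarity to $I + S$ immediately gives real eigenvalues. Non-negativity then reduces to the claim that $-1$ is the smallest eigenvalue of $S$, which I would obtain from the elementary identity
\[
u^T (D + A)\, u \;=\; \sum_{(i,j) \in \EE} (u_i + u_j)^2 \;\geq\; 0.
\]
This shows $D + A \succeq 0$, and hence, after conjugation by $D^{-1/2}$, $I + S \succeq 0$, so the eigenvalues of $I+AD^{-1}$ all lie in $[0,2]$.

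For parts 2 and 3, I would compute $\ker(I + AD^{-1})$. Substituting $\tilde u = D^{1/2} u$, this kernel is in bijection with $\ker(D + A)$, which by the identity above consists exactly of vectors $u$ satisfying $u_i + u_j = 0$ on every edge. In the non-bipartite connected case, any odd cycle forces such a $u$ to vanish along the cycle, and then connectedness propagates $u \equiv 0$; hence the kernel is trivial and the rank is $N$. In the bipartite connected case $\mathcal{G} = V \cup W$, the same edge condition together with connectedness forces $u$ to be constant $c$ on $V$ and $-c$ on $W$, exhibiting a one-dimensional kernel and giving rank $N - 1$. Part 3 is then immediate: since $I + AD^{-1}$ is similar to the symmetric matrix $I + S$ it is diagonalizable (so the stated assumption is in fact automatic), algebraic multiplicity equals geometric multiplicity, which we just showed equals one.

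The only nontrivial step is the uniqueness argument in the bipartite case, which is where both hypotheses (connectedness and bipartiteness) are genuinely used; it is handled by a straightforward path-walking argument propagating the sign alternation $u_i = -u_j$ along edges. The reference to Lemma~2.2 of \cite{MR1918746} in the statement confirms that this is the undirected analogue of an earlier directed-graph result, and the symmetrization trick via $D^{1/2}$ considerably streamlines what in the directed case required Perron--Frobenius-type machinery.
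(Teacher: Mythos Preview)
Your proof is correct and follows essentially the same route as the paper: the similarity $AD^{-1} = D^{1/2}(D^{-1/2}AD^{-1/2})D^{-1/2}$ for realness, and the identity $u^T(D+A)u = \sum_{(i,j)\in\EE}(u_i+u_j)^2$ to analyze the kernel of $D+A$ (the paper instead invokes column-stochasticity for non-negativity and phrases the kernel argument via a sign partition rather than an odd cycle, but these are cosmetic). Your additional observation that $I+AD^{-1}$ is \emph{automatically} diagonalizable---being similar to a real symmetric matrix---is a genuine sharpening: the paper leaves diagonalizability as a hypothesis in part~3 and even carries it as standing assumption~(II) in Section~\ref{Sec:FTSR}, whereas your remark shows it is never needed for undirected graphs.
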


\begin{proof}
For the graph $\GG$, define the set $\EE^\prime$ as the set of edges in $\GG$ such that for any $(x, y) \in \mathcal{E}$, both $(x,y)$ and $(y,x)$ are in $\EE^\prime$. 
\begin{enumerate}
\item For any $x \in \mathbb{R}$, $det(xI- \A \D^{-1})=det(\D^{1/2}(xI-\D^{-1/2} \A \D^{-1/2}) \D^{-1/2})$. So, the eigenvalues of $\A \D^{-1}$ are the same as those of $\D^{-1/2}\A \D^{-1/2}$, which is symmetric. Therefore, the eigenvalues of $\A{\D}^{-1}$ and hence, those of $I+\A \D^{-1}$ are real. Since $\A \D^{-1}$ is column stochastic, the eigenvalues of $I+\A \D^{-1}$ are non-negative.

\item For $x \in \mathbb{R}^N$, \begin{eqnarray} x^{T}(\D+\A)x &=& \sum_{i=1}^{N}d_ix_i^2 +\sum_{(i,j) \in \EE^\prime}x_ix_j \nonumber\\
&=& \sum_{(i,j) \in \EE^\prime}\frac{x_i^2}{2}+ \sum_{(i,j) \in \EE^\prime}\frac{x_j^2}{2} + \sum_{(i,j) \in \EE^\prime}x_ix_j \nonumber\\
&=&\sum_{(i,j) \in \EE^\prime}\frac{(x_i+x_j)^2}{2}. \label{D+A}
\end{eqnarray}

Now, $ x^{T}(\D+\A)x =0$ implies that $x_i =-x_j$ whenever $(i,j)\in \EE^\prime$.  Let $V=\{i \in [N] : x_i>0\}, W=\{i \in [N] : x_i<0\}$ and $L= \{i \in [N] : x_i=0\}$. For a connected graph, $L$ is either empty or whole of $[N]$. 

Suppose $V$ is non-empty and suppose $v \in V$ such that $x_v = \alpha$. Then, $x_i=\alpha \forall i \in V$, and  $x_i=-\alpha \forall i \in W$. %Since $x_i =-x_j$ whenever $(i,j)\in \EE^\prime$, we have that $V$ and $W$ are partitions of the graph. 
Thus, the graph can be partitioned into two disjoint vertex sets $V$ and $W$  such that the edges only connect vertices from $V$ to vertices of $W$,  and hence, the graph is bipartite ($V$ and $W$ being the two partitions of the graph). In this case, we have $Rank(I+\A \D^{-1})=Rank(\D+\A)=N-1$ because the solution of $ x^{T}(\D+\A)x =0$ is representable in terms of one variable alone. Since the graph cannot be partitioned in the case of a non-bipartite graph, we must have that $L=[N]$, which means that $x=\0$, where $\0$ is the zero vector of dimension $N$. $\D+\A$ is full rank in this case.

\item Since $Rank(I+\A \D^{-1})=N-1$ for a bipartite graph, the geometric multiplicity of $0$ eigenvalue is equal to one, and since the matrix is assumed to be diagonalizable, algebraic multiplicity of $0$ also equals one.
\end{enumerate}
\end{proof}

\noindent We now state and prove results for almost sure convergence of $Z_t$.

\

\begin{theorem}  \label{Friedman} For the Friedman-type reinforcement scheme, as $t \rightarrow \infty$, $Z_t \rightarrow \frac{1}{2} \1$ almost surely whenever one of the following holds.
\begin{enumerate}
\item FTSR model
 \begin{enumerate}
    \item   $0<p \leq 1$. 
    \item  $p=0$ and the graph $\mathcal{G}$ is  non-bipartite. 
   
     \end{enumerate}
\item FTNR model
\begin{enumerate}
    \item   $0 \leq p<1$. 
    \item  $p=1$ and the graph $\mathcal{G}$ is non-bipartite. 
    
     \end{enumerate}
\item The model is FTSNR, regardless of the value of $p$. 

\end{enumerate}
\end{theorem}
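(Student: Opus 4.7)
The plan is to recognise the recursion derived at the end of Section~\ref{Sec:prelim} as a standard $N$-dimensional stochastic approximation scheme
$$Z_{t+1} = Z_t + \tfrac{1}{t+1}\bigl(h(Z_t) + M_{t+1}\bigr) + \epsilon_t,$$
with $\FF_t$-martingale increments $M_{t+1} = s^{-1}\Delta M_{t+1}(\eta I + \kappa\A)\Omega^{-1}$ that are uniformly bounded (since $\chi_{t+1}(j)\in[0,s]$), and with $\epsilon_t \to 0$ because $T_{t+1}(i)=T_0(i)+Cs\omega_i(t+1)$. The stochastic approximation theorem recalled in the Appendix then says that $Z_t$ converges almost surely to the set of globally asymptotically stable equilibria of $\dot z = h(z)$, so it is enough to prove that in each of the listed Friedman cases, $\tfrac{1}{2}\1$ is the unique, and globally asymptotically stable, equilibrium.

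First I would verify $h(\tfrac{1}{2}\1)=\0$ in each case, by direct substitution into the appropriate line of \eqref{h functions} together with the identities $\1\A\D^{-1}=\1$ and $\1(\A+I)(I+\D)^{-1}=\1$ (both matrices are column stochastic). Since each $h$ is affine, $h(z)=\1-zM$ with $M=I+X$ and $X$ column stochastic as in Lemma~\ref{stable}, uniqueness of the equilibrium is equivalent to invertibility of $M$, and global asymptotic stability of the resulting linear ODE follows once every eigenvalue of $-M$ has strictly negative real part. Lemma~\ref{stable} already gives non-positive real parts, so the entire proof collapses, case by case, to ruling out $-1$ as an eigenvalue of the column stochastic matrix $X$.

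The three cases are then handled by separate spectral arguments. For FTSR, $X=pI+(1-p)\A\D^{-1}$ has eigenvalues $p+(1-p)\mu$ with $\mu$ a real eigenvalue of $\A\D^{-1}$ lying in $[-1,1]$ by Lemma~\ref{ADinverse}; the equation $p+(1-p)\mu=-1$ forces $p=0$ and $\mu=-1$, and $\mu=-1$ in turn requires $\GG$ to be bipartite (Lemma~\ref{ADinverse}(2)), exactly the case excluded by hypotheses 1(a)--1(b). For FTNR, $X=p\A\D^{-1}+(1-p)(\A\D^{-1})^2$ and the eigenvalue condition becomes the quadratic $(1-p)\mu^2+p\mu+1=0$; elementary analysis of this quadratic (the larger real root, when it exists, equals $-1$ only at $p=1$ and is strictly less than $-1$ otherwise) combined with the bipartite exclusion at $p=1$ handles 2(a)--2(b). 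For FTSNR, the matrix $X=(pI+(1-p)\A\D^{-1})(\A+I)(I+\D)^{-1}$ has strictly positive $(i,i)$ entry for every vertex $i$: when $p>0$ the $pI$ factor already contributes $p\cdot(d_i+1)^{-1}>0$, and when $p=0$ a short calculation yields $X_{ii}=(d_i+1)^{-1}\sum_{k\sim i}d_k^{-1}>0$ by connectedness of $\GG$. A standard maximum-modulus argument applied to the row-stochastic transpose $X^T$ then forces any eigenvalue of modulus one to equal $+1$, so $-1\notin\mathrm{spec}(X)$ for every $p\in[0,1]$ and every connected $\GG$.

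The step I expect to require the most care is the FTSNR spectral argument: the two column stochastic factors do not commute and are not simultaneously symmetrisable by a single similarity, so the spectrum of the product cannot be read off from the spectra of the factors directly, and the positive-diagonal shortcut both needs the explicit entry calculation at $p=0$ and a careful unfolding of the equality case in $|(X^T v)_i|\le\sum_j X^T_{ij}|v_j|$ to conclude $\lambda=1$. Once the spectral separation from $-1$ is secured in every case, Lemma~\ref{stable} together with the Appendix stochastic approximation theorem delivers $Z_t\to\tfrac{1}{2}\1$ almost surely.
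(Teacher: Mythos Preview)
Your proposal is correct and follows essentially the same route as the paper: reduce via stochastic approximation to showing that $\tfrac{1}{2}\1$ is the unique equilibrium of the affine ODE $\dot z = \1 - z(I+X)$, and then rule out $-1\in\mathrm{spec}(X)$ case by case using the real spectrum of $\A\D^{-1}$ and the bipartite characterisation of the eigenvalue $-1$. The only noteworthy variation is in the FTSNR case, where you verify that \emph{every} diagonal entry of $X$ is strictly positive and conclude directly via a maximum-modulus argument, while the paper checks a single positive diagonal entry and then invokes irreducibility and aperiodicity of the stochastic matrix; both arguments are valid and lead to the same conclusion.
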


 \begin{proof} Using the theory of stochastic approximation, we know that it is enough to show that $\frac{1}{2} \1$ is the unique stable limit point of the solution of the O.D.E. $\dot{z}_t=h(z_t)$. From \eqref{h functions}, it is clear that $h(z)=\0$  has a unique solution in each of the cases if the matrices $(1+p)I + (1-p) \A \D^{-1}$, $p \A \D^{-1} + (1-p)(\A \D^{-1})^2 + I$ and $(pI + (1-p)\A \D^{-1})(\A+I)(I+\D)^{-1} +I $ are invertible. Since for all these matrices, each column sum is $2$, it follows that $\frac{1}{2} \1$ is a solution of $h(z)=\0$ in all three cases of Friedman-type reinforcement. Note that from Lemma~\ref{stable}, it follows that if this is a unique solution, it is also stable. Thus, it is enough to prove the invertibility of these matrices under the conditions of the theorem.  
 \begin{enumerate}
\item 
\begin{enumerate}
 \item We have $(1+p)I + (1-p)\A \D^{-1} = (1+p)( I + (1-p)/(1+p) \A \D^{-1})$. Since $\A \D^{-1}$ is a column stochastic matrix, $I + r \A \D^{-1}$ is invertible for any $r \in \mathbb{C}$ such that $ \left| r \right| < 1$. Since $ \Big \vert \frac{1-p}{1+p}\Big \vert < 1$ for $ 0 < p \leq 1$, we are done.
  \item It follows from Lemma~\ref{ADinverse} that the matrix $\D+ \A$ is invertible whenever $\GG$ is non-bipartite. 
  \end{enumerate}
\item 
\begin{enumerate}
 \item For  $0 < p < 1$, we have
 
 \begin{eqnarray*}\label{prod1}
  p \A \D^{-1} + (1-p)(\A \D^{-1})^2 + I &=& \left( I + \frac{p - \sqrt{p^2 - 4(1-p)}}{2}\A \D^{-1} \right) \times\\
   &&\left(I + \frac{p +\sqrt{p^2 - 4(1-p)}}{2}\A \D^{-1}\right)  \\
&= &  \left( I + \frac{p - \sqrt{(p+2)^2 - 8}}{2}\A \D^{-1} \right) \times\\ 
&& \left(I + \frac{p +\sqrt{(p+2)^2 - 8}}{2}\A \D^{-1}\right).
 \end{eqnarray*}
 
 Since $0<p<1$, whenever $(p+2)^2 - 8 \geq 0$, $ \Big \vert \frac{p \pm \sqrt{{{(p+2)}^2} - 8}}{2} \Big \vert < 1$. For ${(p+2)^2 }- 8 < 0$, $\Big \vert \frac{p \pm \sqrt{(p+2)^2 - 8}}{2} \Big \vert = \sqrt{1-p} < 1$. Thus,  $p \A \D^{-1} + (1-p)(\A \D^{-1})^2 + I$ can be written as a product  $(I+r_1B_1)(I+r_2B_2)$, where $B_1, B_2$ are column stochastic and $|r_1|, |r_2| < 1$. This implies $ p \A \D^{-1} + (1-p)(\A \D^{-1})^2 + I$ is invertible.

 \
 
 For $p=0$, $I+(\A\D^{-1})^2 = (I+ i \A\D^{-1})(I-i\A\D^{-1})$, and both $(I+ i \A\D^{-1})$ and $(I-i\A\D^{-1})$ are invertible since from Lemma~\ref{ADinverse} (1), we know that $\A \D^{-1}$ has real eigenvalues.

\item This case is the same as that of FTSR with $p=0$.

\end{enumerate}
\item  Notice that $(pI + (1-p)\A \D^{-1})(\A+I)(I+\D)^{-1}= [p\A +(1-p)\A\ \D^{-1 }\A + pI +(1-p)\A \D^{-1}](I+\D)^{-1}$ is irreducible for $0 \leq p \leq 1$ since  $\mathcal{G}$ is connected. To show that this matrix is aperiodic for $0 \leq p \leq 1$, it is enough to show that one of the diagonal terms of this matrix is strictly positive. This is obvious when $0 < p \leq 1$ (because of the term $pI$). For $p=0$, this is true since $\exists j \in [N]$ such that $(1,j) \in \mathcal{E}$; hence, we have $(\A \D^{-1} \A)_{1,1} >0$. Since this matrix is also stochastic, it cannot have $-1$ as an eigenvalue (because of aperiodicity, there can exist only one eigenvalue with absolute value $1$). Hence, $(pI + (1-p)\A \D^{-1})(\A+I)(I+\D)^{-1} +I$ is invertible for $0 \leq p \leq 1$.

\end{enumerate}
\end{proof}

\begin{remark}[Reinforcing a randomly chosen neighbour] \label{rem:random} Note that for the stochastic approximation argument to work, in \eqref{recur_vector}, it is enough that as $t \to \infty$, $\epsilon_t \to 0$ almost surely. Therefore, the above convergence result also holds in cases when $T_t(i)$ is not deterministic, but $\frac{T_t(i)}{t} $ converges to a fixed constant almost surely. For instance, consider the following variation in the FTNR model (with $p=1$) described above: instead of reinforcement of all the neighbours, a random neighbour is chosen and reinforced. In this case, $\{ T_t(i) \}_{i \in [N]}$ is no longer deterministic. Let $\I_{t+1}(i,j)$ denote the indicator function which takes value $1$ if the $j^{th}$ urn chooses the $i^{th}$ urn for reinforcement at time $t+1$. Since a neighbour is chosen uniformly at random, $P({\mathbbm{I}}_{t+1}(i,j)=1 | \mathcal{F}_t) = \frac{1}{d_j}$. For $ i \in [N]$, we get
\begin{eqnarray*} 
\frac{T_{t+1}(i)}{t+1} &=& \frac{T_t(i) + Cs \sum_{j \in \mathcal{N}(i)}\I_{t+1}(i,j)}{t+1} \\
&=& \frac{T_{t}(i)}{t} +  + \frac{Cs}{t+1} \left( \sum_{j \in \mathcal{N}(i)}\frac{1}{d_j} - \frac{T_{t}(i)}{Cs t} \right) +  \frac{Cs}{t+1}\sum_{j \in \mathcal{N}(i)}(  \I_{t+1}(i,j) -E[ \I_{t+1}(i,j) | \mathcal{F}_t]  ).
\end{eqnarray*}
Using stochastic approximation, we conclude that as $t \to \infty$, $\frac{T_t(i)}{t} \to T^\star(i) \coloneqq Cs \sum_{j \in \mathcal{N}(i)}\frac{1}{d_j}$ almost surely. %For $i \in [N]$, let $x_i=\frac{1}{s \sum_{j \in \mathcal{N}(i)} \frac{1}{d_j}}$. 
We can now write a recursion for $Z_t(i)$ as follows
\begin{eqnarray*}
Z_{t+1}(i) = Z_t(i) + \frac{C}{T^\star(i) (t+1)} \left( \sum_{j \in \mathcal{N}(i)}\I_{t+1}(i,j) \chi_{t+1}(j) +\frac{(T_{t}(i)-T_{t+1}(i))Z_t(i)}{C} \right) + \epsilon_t(i),
\end{eqnarray*}

%\begin{eqnarray*}
%Z_{t+1}(i)&=&\frac{T_t(i)Z_t(i)}{T_{t+1}(i)}  + \frac{C}{T_{t+1}(i)} \sum_{j \in \mathcal{N}(i)} {\mathbbm{1}}_{t+1}(i,j) \chi_{t+1}(j)\\
%&=& Z_t(i) + \frac{C}{T_{t+1}(i)}\left( \sum_{j \in \mathcal{N}(i)} {\mathbbm{1}}_{t+1}(i,j) \chi_{t+1}(j) +\frac{(T_{t}(i)-T_{t+1}(i))Z_t(i)}{C} \right) \\
%&=& Z_t(i) + \frac{x_i}{t+1} \left( \sum_{j \in \mathcal{N}(i)}{\mathbbm{1}}_{t+1}(i,j) \chi_{t+1}(j) +\frac{(T_{t}(i)-T_{t+1}(i))Z_t(i)}{C} \right) + \epsilon_t(i),
%\end{eqnarray*}

where $\epsilon_t(i) = \left( \frac{C}{T_{t+1}(i)} -   \frac{C}{T^\star(i) (t+1)} \right) \left( \sum_{j \in \mathcal{N}(i)} \I_{t+1}(i,j) \chi_{t+1}(j) + \frac{(T_{t}(i) - T_{t+1}(i)) Z_t(i)}{C} \right) \xrightarrow{t \rightarrow \infty} 0$  almost surely. Since  $E\left[(T_{t+1}(i)-T_{t}(i))| \mathcal{F}_t\right] =Cs \sum_{j \in \mathcal{N}(i)} \frac{1}{d_j}$, the stochastic approximation scheme for $Z_t$ is given by
%
%
%\begin{eqnarray*}
%Z_{t+1}(i)&=& Z_t(i) + \frac{x_i}{t+1} \left( \sum_{j \in \mathcal{N}(i)} E[{\mathbbm{1}}_{t+1}(i,j) \chi_{t+1}(j) | \mathcal{F}_t] + Z_t(i)\frac{E\left[(T_{t}(i)-T_{t+1}(i))| \mathcal{F}_t\right]}{C}    + \Delta M_t(i) \right) + \epsilon_t(i)\\
%&=& Z_t(i) + \frac{s x_i}{t+1} \left(  \sum_{ j \in \mathcal{N}(i)} \frac{1-Z_t(j)}{d_j}  -Z_t(i)\sum_{j \in \mathcal{N}(i)}\frac{1}{d_j}   + \Delta M_t(i) \right) + \epsilon_t(i).
%\end{eqnarray*}
%
%In vector form, we get
$$Z_{t+1} =Z_t + \frac{1}{t+1}\left( \1 Y -Z_t({\D}^{-1}\A +Y) + \Delta M_t   \right)Y^{-1}   + \epsilon_t ,$$

where $Y$ is the diagonal matrix containing $\sum_{j \in \mathcal{N}(1)}\frac{1}{d_j}, \sum_{j \in \mathcal{N}(2)}\frac{1}{d_j} \hdots \sum_{j \in \mathcal{N}(N)}\frac{1}{d_j}$ as the diagonal entries. Thus, $Z_t  \xrightarrow{ t \rightarrow \infty}\frac{1}{2} \1$ almost surely whenever the matrix ${\D}^{-1}\A +Y$ is invertible.
\end{remark}
      \color{black}
 \

We now address the remaining cases. We use Theorem 2.1 from \cite{MR3315611} (see Theorem~\ref{Tadic_SA} in the \hyperref[appn]{Appendix}) to show that on a connected regular bipartite graph with some conditions on the initial configuration, under the reinforcement scheme FTSR with $p=0$, $Z_t$ admits an almost sure limit. Under these conditions, the $h$ function corresponding to the stochastic approximation scheme for $Z_t$ can be realized as the gradient of a function $f$. Similar results are obtained for P\'olya-type reinforcement on regular graphs. We first check that both P\'olya and Friedman-type schemes with self or neighbour reinforcement satisfy the assumptions of Theorem 2.1 \cite{MR3315611}.

 \begin{lemma} \label{assumptions}
Assume that the underlying graph $\GG$ is regular with degree $d$ and $T_{0}(i) = T_0 \ \forall i \in [N]$. Then, models with both P\'olya and Friedman-type reinforcement (except for the FTSNR model) satisfy assumptions 1-3 as stated in Theorem~\ref{Tadic_SA} in the \hyperref[appn]{Appendix}. 
\end{lemma}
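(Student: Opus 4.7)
The plan is to verify each of the three assumptions of Theorem~\ref{Tadic_SA} individually. Since our recursion for $Z_t$ is already in standard stochastic approximation form with step-size $\gamma_t = 1/(t+1)$, the summability conditions $\sum_t \gamma_t = \infty$ and $\sum_t \gamma_t^2 < \infty$ hold automatically, which I expect will handle the first assumption. What remains is to exhibit $h$ as the negative gradient of a smooth function $f$ (the gradient-field hypothesis) and to control the martingale-difference noise and the bias term $\epsilon_t$.

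The key structural point I would exploit is that $d$-regularity forces every matrix appearing in \eqref{h functions} to be symmetric. Indeed, $\D = dI$ gives $\A \D^{-1} = d^{-1}\A$, which is symmetric since $\A$ is; similarly $(I+\D)^{-1} = (d+1)^{-1} I$, and $\Omega = \omega I$ reduces to a scalar multiple of the identity, where $\omega \in \{1, d, d+1\}$ depending on the model. Substituting these identities into each of the five relevant rows of \eqref{h functions} (FTSNR is excluded from the lemma precisely because its convergence is already provided by Theorem~\ref{Friedman}(3), so the present apparatus is not needed for it), the mean field becomes $h(z) = c - z M$ for a constant vector $c \in \{\bzero, \1\}$ and a symmetric matrix $M$ expressible as a polynomial in $\A$ — for instance $M = (1+p)I + \tfrac{1-p}{d}\A$ in the FTSR case and $M = I + \tfrac{p}{d}\A + \tfrac{1-p}{d^2}\A^2$ in the FTNR case. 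Defining $f(z) := \tfrac{1}{2} z M z^T - c z^T$, the symmetry of $M$ yields $\nabla f(z) = z M - c = -h(z)$, giving the required gradient representation; smoothness and Lipschitz continuity of $\nabla f$ are automatic since $f$ is quadratic, and boundedness of the iterates $Z_t \in [0,1]^N$ is built into the model.

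For the noise and bias conditions I would argue that the sample counts $Y_{t+1}(j)$ take values in $\{0,1,\ldots,s\}$, so $\chi_{t+1}(j)$ and hence the martingale differences $\Delta M_{t+1}(j)$ are uniformly bounded by $s$; multiplication by the fixed matrix $(\eta I + \kappa \A)\Omega^{-1}$ preserves this bound and supplies whatever moment control Tadi\'c's assumption demands. The hypothesis $T_0(i) = T_0$ combined with $d$-regularity makes $T_t(i) = T_0 + Cs\omega(t+1)$ independent of $i$, so the factor $\tfrac{Cs\omega_i}{T_{t+1}(i)} - \tfrac{1}{t+1}$ in the definition of $\epsilon_t$ is of order $1/t^2$, giving $\epsilon_t = \mathcal{O}(1/t^2)$ componentwise and hence summability. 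The main (though not deep) obstacle is the case-by-case bookkeeping needed to confirm the symmetry of $M$ for each of the five included models; the only genuinely non-obvious case is PTSNR, where one must expand $(pI + \tfrac{1-p}{d}\A)(\A+I)$ and observe that it is still a polynomial in the single symmetric matrix $\A$ and therefore symmetric.
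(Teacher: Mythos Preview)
Your structural observation is correct and matches the paper's approach: regularity gives $\D=dI$ and $\Omega=\omega I$, so every matrix in \eqref{h functions} becomes a polynomial in the symmetric matrix $\A$ and hence symmetric, and $h$ is minus the gradient of a quadratic $f$. However, your verification of the three assumptions of Theorem~\ref{Tadic_SA} has real gaps.

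Assumption~3 is not smoothness or Lipschitz continuity of $\nabla f$; it is the \L{}ojasiewicz-type inequality $|f(x)-c|\le M_{Q,c}\|\nabla f(x)\|^{\mu_{Q,c}}$. The paper handles this in one line by observing that $f$ is analytic (indeed polynomial), which is a standard sufficient condition for such an inequality. Your sentence ``smoothness and Lipschitz continuity of $\nabla f$ are automatic since $f$ is quadratic'' names properties that are not what Assumption~3 demands. Likewise, Assumption~2 is the specific almost-sure bound $\limsup_n\max_{n\le k<\tilde a(n,1)}\bigl\|\sum_{i=n}^k a_i\gamma_i^r\beta_i\bigr\|<\infty$; boundedness of $\Delta M_{t+1}$ does not by itself deliver this. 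The paper takes $r=2$, sets $S_t=\sum_{i<t} a_i\gamma_i^2\beta_i$, notes that $\{S_t\}$ is a martingale with $\sum a_i^2\gamma_i^4\|\beta_i\|^2<\infty$ (since $\gamma_i\sim\log i$ and $\sum (\log n)^4/n^2<\infty$), and concludes almost-sure convergence and hence the Cauchy property. Your phrase ``supplies whatever moment control Tadi\'c's assumption demands'' skips exactly this computation.

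A smaller point: the paper eliminates your $\epsilon_t$ altogether by taking the exact step size $a_t=\bigl(\tfrac{T_0}{Cs\omega}+t+1\bigr)^{-1}$ rather than $1/(t+1)$ --- this is precisely where the hypothesis $T_0(i)=T_0$ together with regularity is used, and it puts the recursion directly into the form \eqref{SA2}. Your route of keeping $\epsilon_t=\mathcal O(1/t^2)$ can be made to work, but then $\epsilon_t$ has to be folded into $\beta_t$, and in checking Assumption~2 you must treat the non-martingale piece $\sum a_i\gamma_i^2\epsilon_i$ separately (it is absolutely summable, so this is routine, but it needs to be said).
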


\begin{proof}

Since the graph is regular, we have that $d_i=d$ and $\omega_i = \omega \ \forall \ 1 \leq i \leq N$. Then using \eqref{recur}, we have
\begin{eqnarray*}
Z_{t+1}(i) &=& \frac{T_t(i)}{T_{t+1}(i)} Z_t(i) + \frac{C}{T_{t+1}(i)} \left( \eta \chi_{t+1} (i)  +  \kappa \sum\limits_{j \in N(i)} \chi_{t+1} (j) \right) \\
&=& Z_t(i) + \frac{1}{\frac{T_0}{Cs\omega} + t+1}   \left(  \eta \frac{E[\chi_{t+1}(i) | \FF_t]}{s\omega}- Z_t(i) +  \kappa \left( \frac{\sum\limits_{j \in N(i)}E[\chi_{t+1}(j) | \FF_t]}{s\omega} \right) \right)  \\
&& +  \frac{1}{\frac{T_0}{Cs\omega} + t+1}   \left(  \frac{1}{s\omega} \left( \eta \Delta M_{t}(i) +\kappa \sum\limits_{j \in N(i)} \Delta M_{t}(j)   \right) \right).
\end{eqnarray*}

Writing in vector form, we get
\begin{eqnarray} \label{regularrecursive}
Z_{t+1} = Z_t +  \frac{1}{\frac{T_0}{Cs\omega} + t+1} \left( \frac{E[\chi_{t+1} | \FF_t]}{s\omega}  (\eta I + \kappa \A) -  Z_t + \frac{1}{s\omega} \Delta M_{t}(\eta I + \kappa \A) \right).
\end{eqnarray}
Define $X \coloneqq (1 -\frac{p\eta}{\omega})I - \frac{1}{\omega} \left( p\kappa + \frac{(1-p)\eta }{d} \right) \A - \frac{(1-p)\kappa }{d\omega} \A^2$ and $Y \coloneqq (1 + \frac{p\eta}{\omega})I + \frac{1}{\omega}\left( p\kappa  + \frac{(1-p)\eta }{d}\right) \A + \frac{(1-p)\kappa }{d\omega} \A^2$. Then, $X$ and $Y$ are symmetric matrices. For $z = (z_1, z_2 \ldots z_N) \in [0, 1]^N$, define \\ $f: \mathbb{R}^N \to \mathbb{R}^N$ by 
\begin{equation*} f(z)  =  \begin{cases}  \frac{\sum_{i = 1} ^{N} {z_i}^2 X_{ii} }{2} +  \sum_{i < j} z_i z_j X_{ij} & \text{for  P\'olya-type reinforcement scheme,}  \\
 \frac{\sum_{i = 1} ^{N} {z_i}^2 Y_{ii} }{2} +  \sum_{i < j} z_i z_j Y_{ij} -  \sum_{i = 1} ^{N} z_i &  \text{for the Friedman-type reinforcement scheme.}
\end{cases} 
\end{equation*}
Using the symmetry of $X$, for $r\in [N]$ and for  P\'olya-type reinforcement, we have

\begin{eqnarray*}
\frac{\partial f}{\partial z_r} = z_r X_{r,r} + \sum_{i <r} z_i X_{i,r} + \sum_{j>r} z_j X_{r,j} =  \sum_{j=1}^{N} z_j X_{j,r}. 
\end{eqnarray*}
Combining this with a similar calculation for Friedman-type reinforcement, we get

\begin{eqnarray} \label{X and Y} \nabla f(Z_t)  =  \begin{cases}   Z_t X
 & \text{for P\'olya-type reinforcement scheme,  }  \\
  Z_t Y -  \1 &  \text{for the Friedman-type reinforcement scheme.}
\end{cases} 
\end{eqnarray}

$X$ and $Y$ have been defined in such a way that $\nabla f(Z_t)= -h(Z_t)$  for all the six models (after substituting corresponding $\eta, \kappa$ and $\omega$ for each model). Hence, with $a_t = 1/\left(\frac{T_0}{Cs\omega} + t+1\right)$ and  $\beta_t = -\Delta M_{t}(\eta I + \kappa \A) / s\omega $  for $t \geq 0$, recursion in \eqref{regularrecursive} can be written in the form of the stochastic approximation scheme in \eqref{SA2} of the \hyperref[appn]{Appendix}.

%Hence, with $a_t = \frac{1}{\frac{T_0}{Cs\omega} + t+1}$ and  $\beta_t = -\frac{\Delta M_{t}(\eta I + \kappa A)}{s\omega}$  for $t \geq 0$, \eqref{regularrecursive} can be written in the form of the stochastic approximation scheme in \eqref{SA2} given in the \hyperref[appn]{Appendix}.

Now we verify the three assumptions of Theorem~\ref{Tadic_SA}. Assumption $1$ is clearly satisfied for the chosen $\{ {a}_{t}\}_{t \geq 0}$. Assumption 3 is satisfied since $f$ is an analytic function on its entire domain. We now check assumption 2. Let $S_t = \sum_{i = 0} ^{t-1} a_i \gamma_i^2 \beta_{i} $, where $\gamma_i = \sum_{j=0}^{i-1} a_j$. Since $\{\Delta M_{t}\}_{t\geq 0}$is a martingale difference sequence, we have $E[S_{t+1} | {\mathcal{F}}_{t}] = S_t$. Hence, $ \{S_t\}_{t \geq 0} $ is  a martingale. Note that $a_t \sim 1/t$ and therefore $\gamma_k \sim H_k$, where $H_k$ is the $k^{th}$ Harmonic number. Since $H_n \sim \log n$, $\sum_{n=1} ^{\infty} \frac{{H_n}^4}{n^2} < \infty$. Since $\left| \left| \beta_i \right|\right|$ is uniformly bounded, we therefore have $\sum_{i = 0} ^{t} E[ { \| S_{i+1} - S_{i} \| }^2 |  {\mathcal{F}}_i] \leq  \sum_{i = 0} ^{t} {a_i}^2 {\gamma_i}^4 {\left| \left| \beta_i \right|\right|}^2 < \infty$. 
%The latter holds, because $H_n =O(log(n))$. 
Hence, $ \{S_t\}_{t \geq 0} $ converges almost surely to a finite random vector. This implies that it is also almost surely Cauchy and we have $ \limsup\limits_{n \to \infty} \max \limits_{ n \leq k < a(n,1)} \left| \left| \sum_{i = n} ^{k} a_i {\gamma_i}^2 \beta_i \right|\right| < \infty$.
\end{proof}

 For the following theorems, recall that $m$ is the cardinality of the number of vertices in the first partition of the graph (in case when the graph is bipartite). 

\
 
 \begin{theorem} \label{Friedman2}
Consider FTSR with $p=0$ or FTNR with $p=1$ on an undirected bipartite connected regular graph. Then, as $t \rightarrow \infty$, $Z_t$ converges almost surely to a random vector $\zeta=(\zeta_1,\zeta_2 \ldots \zeta_N)$  such that $\zeta_i=\zeta_\infty \ \forall \ 1 \leq i \leq m$ and $\zeta_{i}=1-\zeta_\infty \ \forall \ m+1 \leq i \leq N$, where $\zeta_\infty$ is a random variable supported on [0,1].
\end{theorem}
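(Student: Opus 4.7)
The plan is to apply Theorem~\ref{Tadic_SA} (Tadic's stochastic approximation result), whose three hypotheses have already been verified for the FTSR and FTNR models on a regular graph in Lemma~\ref{assumptions}. That theorem yields almost sure convergence of $Z_t$ to a random point in the zero set of $\nabla f = -h$, so the remaining task is simply to describe this zero set explicitly and then read off the structure of the limit.

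First I would reduce the problem to a kernel computation. For FTSR with $p=0$, \eqref{h functions} gives $h(z) = \1 - z(I + \A\D^{-1})$, and $d$-regularity yields $\A\D^{-1} = \A/d$, so $h(z) = \0$ is equivalent to $z(I + \A/d) = \1$. By Lemma~\ref{ADinverse}(2) the matrix $I + \A/d$ has rank $N-1$ on a bipartite graph, so its left kernel is one dimensional. The natural candidate kernel vector is $v \in \R^N$ with $v_i = 1$ for $i \in V$ and $v_i = -1$ for $i \in W$: using $d$-regularity and the fact that all edges join $V$ to $W$, one checks $v\A = -dv$, hence $v(I + \A/d) = \0$. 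Since $\1\A = d\1$, a particular solution of $z(I + \A/d) = \1$ is $z = \tfrac{1}{2}\1$, and therefore the zero set of $h$ restricted to $[0,1]^N$ is the line segment
\[
\Sigma = \left\{ \tfrac{1}{2}\1 + \alpha v : \alpha \in \left[-\tfrac{1}{2}, \tfrac{1}{2}\right] \right\}.
\]

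Putting these pieces together, $Z_t \to \tfrac{1}{2}\1 + \alpha_\infty v$ almost surely for some random $\alpha_\infty \in \left[-\tfrac{1}{2}, \tfrac{1}{2}\right]$. Setting $\zeta_\infty = \tfrac{1}{2} + \alpha_\infty \in [0,1]$, this reads $\zeta_i = \zeta_\infty$ for $i \in V$ and $\zeta_i = 1 - \zeta_\infty$ for $i \in W$, exactly as claimed. The case of FTNR with $p=1$ follows by the same argument because \eqref{h functions} shows these two models share identical $h$, an equivalence the paper has already noted. The main delicacy I anticipate is the invocation of Theorem~\ref{Tadic_SA}: the analyticity of $f$ together with the automatic boundedness of $Z_t \in [0,1]^N$ must be used to conclude that $Z_t$ picks out a single random point of $\Sigma$ rather than drifting along this one-dimensional family. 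Once that is in hand, the kernel computation on $I + \A/d$ is routine given the bipartite-regular structure.
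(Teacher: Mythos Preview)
Your proposal is correct and mirrors the paper's proof almost exactly: both invoke Lemma~\ref{assumptions} to apply Theorem~\ref{Tadic_SA}, then use Lemma~\ref{ADinverse}(2) to conclude $\mathrm{Rank}(I+\A/d)=N-1$ and identify the one-parameter family of zeros of $h$; you simply write the kernel vector $v$ and particular solution $\tfrac12\1$ out explicitly, whereas the paper appeals to the bipartite block form of $\A$ and says the verification is straightforward. The handling of FTNR with $p=1$ via identical $h$ is also the same as the paper's.
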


\begin{proof}
The result follows from Theorem~\ref{Tadic_SA} and Lemma~\ref{assumptions}. For $p=0$, in the case of the FTSR model, we have that $\nabla f(Z_t) = Z_t \left(I + \frac{1}{d} \A \right) - \1$. % and for $p=1$, in the case of FTNR model, we have that $\nabla f(Z_t) = Z_t(dI + A ) - d \1$.
From Lemma~\ref{ADinverse}, we know that for a bipartite graph, $Rank \left(I + \frac{1}{d} \A \right) = N-1$. So, the solution set of the linear equation $\nabla f(Z_t) = 0$ for this case can be described using one single free variable. It is straightforward to check that $ \{  (z_1, z_2 \ldots z_N) : z_i = \zeta \ \forall \ 1 \leq i \leq m $ and $z_i = 1-\zeta \ \forall \  m+1 \leq i \leq N$ for $\zeta \in [0,1]\}$ is the solution set of the equation in this case. This follows from the fact that for a bipartite graph,  $\textbf{A}$ is of the form 

$$\begin{pmatrix}
   {\bigzero}_{m \times m} & \rvline &
  \begin{matrix}
  * & \\
   & *
  \end{matrix}
    \\
\hline

\begin{matrix}
  * &  \\
   & *
  \end{matrix}
  
 & \rvline & {\bigzero}_{(N-m) \times(N-m)}
  \end{pmatrix},
$$
 where the off-diagonal blocks are nonzero, and ${\bigzero}_{m \times m}$ denotes a zero sub-matrix with $m$ rows and $m$ columns.  As remarked before, the analysis of FTNR with $p=1$ is essentially the same.  %\color{red} what is the dimension of $\bigzero $ in the matrix? Mention it explicitly because elsewhere $\bigzero $ denotes a vector. \color{black}
\end{proof}
 
\

\begin{theorem} 
Consider P\'olya-type reinforcement on an undirected connected regular graph. Then, as $t \rightarrow \infty$, we have the following.
\begin{enumerate}

 \item In the following cases, $Z_t$ converges almost surely to a random vector $Z=(Z_1,Z_2 \ldots Z_N)$  such that  $Z_i=Z_\infty \ \forall \ 1 \leq i \leq N$, \color{black} where $Z_\infty$ is a random variable supported on $[0,1]$.
\begin{enumerate}
    \item PTSR model with $0 \leq p <1$.
    \item PTNR model with either $ 0 < p \leq 1$, or if $p=0$ and the graph $\GG$ is non-bipartite.
    \item PTSNR model for any $p \in [0, 1]$.
    
   \end{enumerate}
\item In the case of PTNR model with $p = 0$ and the graph $\GG$ bipartite, $Z_t$ converges almost surely to a random vector $Z^{\prime}=(Z^{\prime}_1,Z^{\prime}_2 \ldots Z^{\prime}_N)$  such that $Z^{\prime}_i= Z^{(1)}_\infty \ \forall \ 1 \leq i \leq m $ and $Z^{\prime}_{i}=Z^{(2)}_\infty \ \forall \ m +1 \leq i \leq N $, where $Z^{(1)}_\infty$ and $Z^{(2)}_\infty$ are random variables supported on $[0,1]$.

\end{enumerate}
\end{theorem}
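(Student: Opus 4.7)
The plan is to mirror the strategy of Theorem~\ref{Friedman2}: Lemma~\ref{assumptions} applies verbatim to all three P\'olya-type models, so the recursion for $Z_t$ can be written as a stochastic approximation of the form $Z_{t+1} = Z_t + a_t(-\nabla f(Z_t) + \beta_{t+1})$ with $f$ analytic and $\beta_t$ satisfying the martingale-summability hypothesis. Theorem~\ref{Tadic_SA} then delivers $Z_t \cas Z_{\ast}$, where $Z_{\ast}$ lies almost surely in a connected component of the critical set of $f$, equivalently in the zero set of $h(\cdot)$ in \eqref{h functions}. The problem thus reduces to identifying that zero set for each model through a spectral analysis of $\A\D^{-1} = \A/d$, which is symmetric because $\GG$ is $d$-regular. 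Throughout I use that the spectrum of $\A/d$ lies in $[-1,1]$, that $1$ is a simple eigenvalue with eigenvector $\1$, and that $-1$ is an eigenvalue (also simple) precisely when $\GG$ is bipartite, with eigenvector taking the value $+1$ on $V$ and $-1$ on $W$; these facts are recorded in Lemma~\ref{ADinverse}.

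In each sub-case of Case~1 the goal is to prove that the kernel of the relevant drift matrix is one-dimensional, spanned by $\1$. PTSR with $0 \le p < 1$ is immediate from $h(z) = (1-p)z(\A/d - I)$ since $\mu = 1$ is the only eigenvalue giving a kernel direction. For PTNR with $0 < p < 1$, I factor $p\mu + (1-p)\mu^2 - 1 = (\mu - 1)\bigl((1-p)\mu + 1\bigr)$ and observe that the second root $\mu = -1/(1-p) < -1$ falls outside the spectrum of $\A/d$; the boundary cases $p=1$ (which coincides with PTSR at $p=0$) and $p=0$ with $\GG$ non-bipartite (where $\mu^2 = 1$ forces $\mu = 1$ since $\mu = -1$ is excluded) are handled separately. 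For PTSNR, setting the eigenvalue $(p + (1-p)\mu)(d\mu + 1)/(d+1) - 1$ to zero yields a quadratic in $\mu$ with roots $1$ and $-(d+1-p)/(d(1-p))$, and the latter has absolute value strictly greater than $1$ for every $p \in [0,1)$, while $p=1$ is trivial. So only $\mu = 1$ contributes, and in every sub-case $Z_\ast$ is almost surely a multiple of $\1$, i.e., $Z_i = Z_\infty$ for all $i$ with $Z_\infty \in [0,1]$ (the conclusion of the theorem is to be read with the second block vacuous here).

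For Case~2, PTNR with $p = 0$ on a bipartite $d$-regular connected graph, the drift matrix is $(\A/d)^2 - I$; its kernel is the span of the eigenvectors of $\A/d$ for eigenvalues $\pm 1$, hence two-dimensional, and every member of this kernel that also lies in $[0,1]^N$ is constant on each partition of $\GG$. Theorem~\ref{Tadic_SA} then gives $Z_t \cas Z^{\prime}$ with $Z^{\prime}_i = Z^{(1)}_\infty$ for $i \in V$ and $Z^{\prime}_i = Z^{(2)}_\infty$ for $i \in W$, with both $Z^{(1)}_\infty, Z^{(2)}_\infty \in [0,1]$, which is exactly the claim.

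The step I expect to be the main obstacle is the spectral bookkeeping for PTNR and PTSNR: one must solve the relevant quadratic in $\mu$ and verify that the extra root falls outside $[-1,1]$, and this is where the bipartite/non-bipartite dichotomy interacts delicately with the range of $p$ (explaining, for instance, why non-bipartiteness is required for PTNR at $p=0$ but not for PTSNR at the same value, and why PTSR never feels the bipartite kernel at all). Once these kernel dimensions are pinned down, the almost-sure convergence is immediate from the gradient-flow machinery used in Theorem~\ref{Friedman2}, and no further analytic effort beyond Lemma~\ref{assumptions} and Theorem~\ref{Tadic_SA} is needed.
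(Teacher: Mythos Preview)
Your proposal is correct and follows essentially the same route as the paper: invoke Lemma~\ref{assumptions} together with Theorem~\ref{Tadic_SA} to get almost-sure convergence to a critical point of $f$, then determine the zero set of $h$ by a rank/spectral analysis of the drift matrix. The only cosmetic difference is that you argue at the eigenvalue level (factor the characteristic polynomial in $\mu$ and check that the spurious root lies outside $[-1,1]$), whereas the paper carries out the equivalent matrix factorisations directly, e.g.\ $dI - p\A - \tfrac{1-p}{d}\A^{2} = (dI-\A)\bigl(I+\tfrac{1-p}{d}\A\bigr)$ for PTNR and $(I-\tfrac{1}{d}\A)\bigl[(1-p+d)I+(1-p)\A\bigr]$ for PTSNR, and then appeals to the rank of $I-\tfrac{1}{d}\A$ and invertibility of the second factor; since $\A/d$ is symmetric on a regular graph, the two presentations are interchangeable. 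You also correctly flag that in Case~1 the limiting vector is constant across all urns, which is what the paper's proof in fact establishes (its solution set is $\{z_i=\zeta\ \forall i\}$), so the two-block description in the theorem statement should indeed be read with the second block vacuous.
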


PTSR with $p=1$ is equivalent to studying $N$ independent P\'olya-type urns where each one of them behaves like the urn studied in \cite{MR2203815}.

\begin{proof} Convergence follows from Lemma~\ref{assumptions}. We characterize the zeroes of  $\nabla f$. Note that from Lemma 13.1.1 of \cite{godsil2001}, we have that $(Rank \left( I -\frac{1}{d} {{\textbf {A}}}\right) =Rank \left( \textbf {D}-\textbf {A}\right) = N-1$ (since the graph $\GG$ is connected).

\begin{enumerate}
 \item Note that for P\'olya-type reinforcement $\nabla f (Z_t)$ is of the form $Z_t g(\A)$, where $g$ is some function of $\A$. We will first show that in all three cases, $Rank(g(\A))=N-1$. 
 \begin{enumerate}
 \item For the PTSR model, we get $\nabla f(Z_t) =  (1-p) Z_t \left(I -\frac{1}{d} \A \right)$( put $\eta =1, \kappa=0$  and $\omega =1$ in the matrix $X$ in \eqref{X and Y})  and $Rank (g(\A)) = Rank \left( (1-p) \left(I -\frac{1}{d} \A \right) \right) = N-1$. %It is easy to check that the solution set of $\nabla f = 0$ for this case is given by .
 
\item For the PTNR model, $\nabla f(Z_t) =  Z_t \left( dI - p\textbf{A} - \frac{1-p}{d}{\textbf{A}}^2 \right)$. For $p=1$, $\nabla f(Z_t) =  Z_t \left(dI - \A \right)$ and the result follows from the same argument as (1). For $0 < p < 1$, $dI - pA - \frac{1-p}{d}\A^2 = \left(dI - \A\right) \left(I + \frac{(1-p)}{d}\A \right)$. Since $\frac{1}{d} \A$ is stochastic, $ I + \frac{(1-p)}{d} \A$ is invertible. Hence, 
$$Rank \left(dI - p \A - \frac{1-p}{d} \A^2 \right) = \ Rank ( dI - \A) = N-1.$$ 

For $ p= 0$, when the graph is non-bipartite, by Lemma~\ref{ADinverse} (2) we have that $dI + \A$ is invertible. Therefore, $Rank ( dI - \frac{1}{d} \A^2)  = Rank \left( \left(dI + \A\right) \left(I - \frac{\A}{d} \right) \right) = Rank (I - \frac{\A}{d}) = N-1$. 

\item Finally, for the PTSNR model, we have
\begin{eqnarray*} \nabla f(Z_t)&=& Z_t \left( (d+1-p)I - p \textbf{A} -\frac{(1-p)\textbf{A}}{d} - \frac{(1- p){{\textbf{A}}^2}}{d}\right)\\
&=& Z_t \left(I-\frac{1}{d} \A \right) \left[(1-p+d)I +(1-p)\A\right].
\end{eqnarray*}
Since $Rank \left(I-\frac{1}{d} \A \right) =N-1$, it is enough to show that $(1-p+d)I +(1-p)\A$ is a full rank matrix. Since $\frac{1}{d} \A$ is stochastic and $\left|\frac{(1-p)d}{1-p+d}\right| <1$, we get that $I +\frac{(1-p)d}{(1-p+d)} \frac{1}{d}\A $ is invertible.

\end{enumerate}

In all the three cases above, since $Rank(g(\A))=N-1$, the solution set of $\nabla f=0$ can be described using a single free variable. It is easy to verify that the solution set of $\nabla f = 0$ is given by $\{  (z_1, z_2 \ldots z_N) : z_i = \zeta \ \forall \ 1 \leq i \leq N $ for $\zeta \in [0,1]\}$. 

\item For the second part of the theorem, $dI - \frac{1}{d} \A^2 = \left (dI - \A \right) \left( I + \frac{1}{d} \A \right)$. Since the graph is connected and bipartite, we have that Rank $(dI - \A) = $ Rank $\left( I + \frac{1}{d}\A \right) = N-1$.  By Sylvester's inequality, we also have that $Rank  \left( dI - \frac{1}{d} \A^2 \right) \geq N-1 + N-1 - N = N-2$. Hence, the solution set of $\nabla f = 0$ can be represented by at most two free variables. Since setting $z_i = a \ \forall \ 1 \leq i \leq m $ and $z_i = b \ \forall \ m +1 \leq i \leq N$ solves the equation, the most general solution of the equation is given by $\{  (z_1, z_2 \ldots z_N) : z_i = a \ \forall \ 1 \leq i \leq m $  and  $z_i = b \ \forall \ m +1 \leq i \leq N$ for $a, b \in [0,1]\}$. 
\end{enumerate}
\end{proof}
Thus, we have shown that $Z_t$ admits an almost sure limit, but the limiting fraction of balls of white colour in each urn depends on the underlying graph structure and the reinforcement scheme. In particular, it is crucial whether the dynamics is studied on a bipartite or a non-bipartite graph. In the next section, we study the convergence of $Z_t$ for the FTSR $p=0$ case on a bipartite graph using martingale methods that allow us to drop the regularity assumption in Theorem~\ref{Friedman2} \color{black}.

%%%%%%%%%%%%%%%%%%%%%%%%%%%%%%%%%%%%%%%%%%%%%%%%%%%%%%%%%%%%%%%%%%%%%%%%%%%%%%%%%%%%%%%%%%%%%%%%%%%%%%%%%%%%%%%%%%%%%%%%%%%%%%%%%%%%%%%%%%%%%%%%%%%%%%%%%%%%%%%%%%%%%%%%%%%%%%%%%%%%%%%%%%%%%%%%%%%%%%%%%%%%%%%%%%%%%%%%%%%%%%%%%%%%%%%%%%%%%%%%%%%%%%%%%%%%%%%%%%%%%%%%%%%%%%%%%%%%%%%%%%%%%%%%%%%%%%%%%%%%%%%%%%%%%%%%         FTSR on Bipartite Graphs  %%%%%%%%%%%%%%%%%%%%%%%%%%%%%%%%%%%%%%%%%%%%%%%%%%%%%%%%%%%%%%%%%%%%%%%%%%%%%%%%%%%%%%%%%%%%%%%%%%%%%%%%%%%%%%%%%%%%%%%%%%%%%%%%%%%%%%%%%%%%%%%%%%%%%%%%%%%%%%%%%%%%%%%%%%%%%%%%%%%%%%%%%%%%%%%%%%%%%%%%%%%%%%%%%%%%%%%%%%%%%%%%%%%%%%%%%%%%%%%%%%%%%%%%%%%%%%%%%%%%%%%%%%%%%%%%%%%%%%%%%%%%%%%%%%%%%%%%%%%%%%%%%%%%%%

\section{Special Case: FTSR with $p=0$ on Bipartite Graphs} \label{Sec:FTSR}
We now prove a convergence result for the FTSR model with $p=0$ on bipartite graphs. The result proved in this section does not need the regularity assumption as in  Theorem~\ref{Friedman2}, but we assume the following.
\begin{itemize}
\item[(I)] $T_0(i) = T_0 \ \forall 1 \leq i \leq N$, that is, all urns have the same number of balls at time $t=0$.
\item[(II)] $\A \D^{-1}$ is  diagonalizable. More precisely, let $P$ be an $N \times N$ matrix such that $I+\A \D^{-1}=P \Lambda P^{-1}$, with $\Lambda =Diag( \lambda_1 \ldots \lambda_N)$, where  $\lambda_1, \dots, \lambda_N$ are the eigenvalues of  $I+\A \D^{-1}$.
\end{itemize}
Note that $\lambda_i \geq 0$ for all $1 \leq i \leq N$. Since the underlying graph is bipartite, $0$ is an eigenvalue, and from Lemma~\ref{ADinverse}, we know that it is a simple eigenvalue. Without loss of generality, assume that $\lambda_1=0$. Define 
\begin{equation} \label{theta}
 \theta : =\min \{\lambda_j :  \lambda_j \text{ is a non-zero eigenvalue of}\ I+\A \D^{-1}\}.
 \end{equation}
In the case of the FTSR model (I) implies that at any given time $t$, the total number of balls in each urn is the same and we denote if by $T_t$. Further, for FTSR $\omega_i=1$ for $i \in [N]$, and therefore, we have the following recursion.
\begin{equation}\label{FTSRrecur} 
Z_{t+1}= \left(1-\frac{Cs}{T_{t+1}}\right) Z_t + \frac{C}{T_{t+1}}\chi_{t+1}.
\end{equation}

 We show that the fraction of balls of either colour in urns placed on vertices of $V$ (resp. $W$) converges to the same random limit almost surely.   Define $Z^v_t = (Z_t(1),Z_t(2)\ldots Z_t(m),0,0\ldots0)$ and  $Z^w_t = (Z_t(m+1),Z_t(m+2)\ldots Z_t(N),0,0\ldots0)$ as  $N$-dimensional vectors. For $ 0 <k <N$, let  $e_k \in \mathbb{R}^N$ be a row vector with $1$ as the entry in the first $k$ positions and $0$ otherwise (in the rest of the $N-k$ positions). Further, define
$$\bar{d_v}= \sum\limits_{i \in V} d_i = e_m \D e_m^\top, \bar{ d_w}=\sum\limits_{i \in W} d_i = (\1-e_m) \D (\1-e_m)^\top, \bar{d} =\sum\limits_{i \in [N]} d_i = \1 D \1^\top, $$

$$\bar{Z}_t =\dfrac{1}{\bar{d}} \sum\limits_{i \in [N]} d_i Z_t(i) = \dfrac{1}{\bar{d}} Z_t \D \1^\top, \ \bar{Z}^v_t=\dfrac{1}{\bar{d_v}} \sum\limits_{i \in V} d_i Z_t(i) =  \dfrac{1}{\bar{d_v}} Z_t \D e_m^\top $$

and

$$ \bar{Z}^w_t=\dfrac{1}{\bar{d_w}} \sum\limits_{i \in W} d_i Z_t(i) =  \dfrac{1}{\bar{d_w}} Z_t \D (\1-e_m)^\top.$$

The main result of this section stated in the following theorem shows that urns on vertices in $V$ and $W$ synchronize almost surely, respectively. More precisely, we show that as $t \to \infty$,  $Z_t(i)-\bar{Z}^v_t \xrightarrow{a.s.} 0  $ for $i \in V$, $Z_t(i)-\bar{Z}^w_t \xrightarrow{a.s.}0  $ for $i \in W$, and that $\bar{Z}_t^v+\bar{Z}_t^w \xrightarrow{a.s.} 1$.

\

\begin{theorem} \label{syncFTSR} For the FTSR model with $p=0$ on a bipartite graph, under assumptions (I) and (II), as $t \to \infty$, $Z^v_t - \bar{Z}^v_t e_m \xrightarrow{a.s.} \0$ and  $Z^w_t - \bar{Z}^w_t e_{N-m} \xrightarrow{a.s.} \0$.  
\end{theorem}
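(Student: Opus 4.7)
The plan is to diagonalize the mean-field recursion using assumption (II). Setting $V_t := Z_t P$ and substituting $E[\chi_{t+1}|\FF_t] = s(\1 - Z_t \A \D^{-1})$ into \eqref{FTSRrecur}, one obtains
\begin{equation*}
V_{t+1} = V_t\Bigl(I - \tfrac{Cs}{T_{t+1}}\Lambda\Bigr) + \tfrac{Cs}{T_{t+1}}\, \1 P + \tfrac{C}{T_{t+1}}\, \Delta M_{t+1}\, P,
\end{equation*}
with $\Delta M_{t+1} := \chi_{t+1} - E[\chi_{t+1}|\FF_t]$ a uniformly bounded martingale difference. In this basis the drift decouples, so each coordinate $V_t(j)$ contracts toward $\1 P_{\cdot j}/\lambda_j$ at rate $Cs\lambda_j/T_{t+1}$.

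Reorder the eigenvalues so that $\lambda_1 = 0$ and $\lambda_j \geq \theta$ for $j \geq 2$. By Lemma~\ref{ADinverse} and the bipartite hypothesis, the right null space of $I+\A\D^{-1}$ is one-dimensional; a direct check shows it is spanned by the column vector $u$ with $u_i = d_i$ for $i \in V$ and $u_i = -d_i$ for $i \in W$ (one verifies $\A\D^{-1} u = -u$ using the bipartite block form of $\A$). So we may take $P_{\cdot 1} \propto u$. Since $\bar{d_v} = \bar{d_w}$ on a bipartite graph, $\1 P_{\cdot 1} \propto \bar{d_v} - \bar{d_w} = 0$, which forces $E[V_{t+1}(1)|\FF_t] = V_t(1)$; hence $\{V_t(1)\}_{t\geq 0}$ is a bounded martingale and converges a.s.\ to some random $V_\infty(1)$. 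For $j \geq 2$, the scalar recursion
\begin{equation*}
V_{t+1}(j) - \tfrac{\1 P_{\cdot j}}{\lambda_j} = \Bigl(1 - \tfrac{Cs\lambda_j}{T_{t+1}}\Bigr)\Bigl(V_t(j) - \tfrac{\1 P_{\cdot j}}{\lambda_j}\Bigr) + \tfrac{C}{T_{t+1}}(\Delta M_{t+1}P)_j
\end{equation*}
is a contractive linear stochastic-approximation with step $\sim 1/t$ and bounded martingale noise, so (by Robbins--Siegmund or direct $L^2$-telescoping) $V_t(j) \cas \1 P_{\cdot j}/\lambda_j$.

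Passing back via $Z_t = V_t P^{-1}$ gives $Z_t \cas Z_\infty := V_\infty(1)(P^{-1})_{1,\cdot} + \mu$ where $\mu := \sum_{j \geq 2}(\1 P_{\cdot j}/\lambda_j)(P^{-1})_{j,\cdot}$. The row $(P^{-1})_{1,\cdot}$ is a left null eigenvector of $I+\A\D^{-1}$, hence proportional to the row vector with entries $+1$ on $V$ and $-1$ on $W$; so $V_\infty(1)(P^{-1})_{1,\cdot}$ is constant on each partition (with opposite signs). The deterministic vector $\mu$ satisfies $\mu(I+\A\D^{-1}) = \sum_{j\geq 2}(\1 P_{\cdot j})(P^{-1})_{j,\cdot} = \1$ (using $\1 P_{\cdot 1} = 0$) and lies in $\mathrm{span}\{(P^{-1})_{j,\cdot}\}_{j\geq 2}$ by construction. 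Now $\tfrac12 \1$ also solves $w(I+\A\D^{-1}) = \1$ (since $\1 \A\D^{-1} = \1$), and lies in the same subspace because $\tfrac12 \1 P_{\cdot 1} \propto \bar{d_v} - \bar{d_w} = 0$; as the map $w \mapsto w(I+\A\D^{-1})$ restricted to that subspace is a bijection (eigenvalues $\lambda_j \neq 0$ there), we must have $\mu = \tfrac12 \1$. Therefore $Z_\infty$ is constant on $V$ and constant on $W$, which immediately yields $Z^v_t - \bar{Z}^v_t e_m \cas \0$ and $Z^w_t - \bar{Z}^w_t(\1 - e_m) \cas \0$.

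The main technical obstacle is the stochastic-approximation step for $\lambda_j > 0$: the contraction and the $1/t$ step size are favourable, but upgrading this to an almost-sure statement requires a second-moment estimate on $(\Delta M_{t+1}P)_j$ together with a square-summability argument along the recursion. The conceptual heart of the proof is the bipartite identity $\1 P_{\cdot 1} = 0$ (equivalent to $\bar{d_v} = \bar{d_w}$): this is what turns the null coordinate into a bounded martingale, and simultaneously pins down the deterministic part of the limit as $\tfrac12 \1$ (which, as a by-product, also gives $\bar{Z}_t^v + \bar{Z}_t^w \cas 1$).
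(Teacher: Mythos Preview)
Your argument is correct and takes a genuinely different route from the paper. The paper proceeds in three pieces: a moment lemma (Lemma~\ref{M_N}) showing that $E[Z_tQ]$ and $\mathrm{Var}(Z_tQ)$ decay polynomially for any $Q\in\mathcal{M}_{N,m}$, a separate quasi-martingale argument (Proposition~\ref{proposition}) giving that $Z_t$ admits \emph{some} a.s.\ limit, and finally a combination of the resulting $L^2$-convergence of $\Phi_t^v=Z_tM$ with bounded convergence to upgrade to a.s.\ convergence. Your approach instead diagonalizes the recursion once and for all and treats each coordinate separately: the null coordinate $V_t(1)$ is handled as a bounded martingale (via $\1 P_{\cdot 1}=\bar d_v-\bar d_w=0$), and the remaining coordinates as one-dimensional contractive Robbins--Monro schemes. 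This is more elementary and has the pleasant by-product of identifying the deterministic part $\mu$ of the limit explicitly as $\tfrac12\1$, which in the paper's development comes from a separate computation. What the paper's route buys in exchange is explicit polynomial decay rates ($\sim t^{-\theta}$ for the mean, $\sim t^{-\epsilon}$ for the variance) and a reusable matrix-class framework $\mathcal{M}_{N,m}$ that is later recycled for the PTSR model (Theorem~\ref{syncPTSR}) and the directed-graph example in Section~\ref{Sec:Dir}; your coordinate-wise argument does not immediately yield those rates without further work.
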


\

\noindent We first show that $\bar{Z}_t$ converges to a unique deterministic limit. 

\

\begin{theorem} For the FTSR model with $p=0$ on a bipartite graph, under assumption (I), $\bar{Z}_t \xrightarrow{a.s.} 1/2$ and $\bar{Z}_t^v+\bar{Z}_t^w \xrightarrow{a.s.} 1$, as $t \to \infty$.
\end{theorem}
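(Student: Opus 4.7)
The plan is to extract a scalar quantity, a degree-weighted average of $Z_t$, whose conditional expectation obeys a clean linear recursion, and then to apply a Robbins-Siegmund-type argument.

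\textbf{Step 1 (the right weighted average).} I would look at $\bar Z_t = \frac{1}{\bar d}Z_t\D\1^\top$. Starting from the recursion \eqref{FTSRrecur}, right-multiplying by $\D\1^\top/\bar d$, and using the identity $\A\1^\top = \D\1^\top$ (which holds for every undirected graph, since row sums of $\A$ equal degrees), one obtains
\begin{equation*}
E[\chi_{t+1}\mid\FF_t]\,\D\1^\top \;=\; s\bigl(\1 - Z_t\A\D^{-1}\bigr)\D\1^\top \;=\; s\bar d - s\bar d\,\bar Z_t \;=\; s\bar d\,(1-\bar Z_t).
\end{equation*}
Consequently
\begin{equation*}
E[\bar Z_{t+1}\mid\FF_t] \;=\; \bar Z_t \;-\; \frac{2Cs}{T_{t+1}}\Bigl(\bar Z_t - \tfrac{1}{2}\Bigr),
\end{equation*}
so, writing $M_t := \bar Z_t - \tfrac12$ and $a_t := 2Cs/T_{t+1}$,
\begin{equation*}
E[M_{t+1}\mid\FF_t] \;=\; (1-a_t)M_t.
\end{equation*}
Since $T_{t+1} = T_0 + Cs(t+1)$, one has $a_t \sim 2/t$, hence $a_t\to 0$ and $\sum a_t = \infty$.

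\textbf{Step 2 (almost sure convergence to $1/2$).} The martingale-difference piece is $N_{t+1} := M_{t+1}-E[M_{t+1}\mid\FF_t] = \frac{C}{T_{t+1}\bar d}(\chi_{t+1}-E[\chi_{t+1}\mid\FF_t])\D\1^\top$, and since each coordinate of $\chi_{t+1}$ lies in $[0,s]$ and is conditionally independent across vertices, one gets $E[N_{t+1}^2\mid\FF_t]\le K/t^2$ for a constant $K$. Squaring the recursion gives
\begin{equation*}
E[M_{t+1}^2\mid\FF_t] \;\le\; M_t^2 \;-\; 2a_t M_t^2 \;+\; a_t^2 M_t^2 \;+\; E[N_{t+1}^2\mid\FF_t] \;\le\; M_t^2 - 2a_t M_t^2 + K'/t^2.
\end{equation*}
Robbins-Siegmund then yields that $M_t^2$ converges a.s.\ and $\sum_t a_t M_t^2 < \infty$ a.s. Since $\sum a_t=\infty$, the almost sure limit of $M_t^2$ must be $0$, i.e.\ $\bar Z_t \cas \tfrac12$.

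\textbf{Step 3 (the bipartite identity).} For any bipartite graph, counting edges across the partition gives $\bar d_v = \bar d_w = \bar d/2$. Therefore
\begin{equation*}
\bar Z_t \;=\; \frac{1}{\bar d}\bigl(\bar d_v\bar Z_t^v + \bar d_w\bar Z_t^w\bigr) \;=\; \frac{1}{2}\bigl(\bar Z_t^v + \bar Z_t^w\bigr),
\end{equation*}
so $\bar Z_t^v + \bar Z_t^w = 2\bar Z_t \cas 1$.

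The only mildly delicate point is Step 2 (verifying the Robbins-Siegmund hypotheses with the correct constants); Steps 1 and 3 are essentially algebraic identities about $\A$, $\D$ and the bipartite structure. Notably, assumption (II) on diagonalizability plays no role here; only assumption (I), which makes $T_t$ common to all urns, is used.
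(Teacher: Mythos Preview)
Your proof is correct and follows essentially the same route as the paper: extract the degree-weighted average $\bar Z_t$, derive the scalar recursion $E[\bar Z_{t+1}\mid\FF_t] = \bar Z_t - \frac{2Cs}{T_{t+1}}(\bar Z_t - \tfrac12)$ via the identity $\A\1^\top=\D\1^\top$, and then use the bipartite fact $\bar d_v=\bar d_w=\bar d/2$ to get $\bar Z_t^v+\bar Z_t^w=2\bar Z_t$. The only difference is in how convergence is concluded: the paper invokes the one-dimensional stochastic-approximation ODE $\dot x = 1-2x$ (whose unique stable zero is $1/2$), whereas you run a direct Robbins--Siegmund argument on $M_t^2$; both are standard and your version is marginally more self-contained.
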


\begin{proof}
We write a stochastic approximation scheme for  $\bar{Z}_t$. Using \eqref{FTSRrecur},
\begin{eqnarray}\label{avgZ_recur}
\bar{Z}_{t+1} &=& \frac{1}{\bar{d}} Z_{t+1} \D \1^\top \nonumber \\
&=&  \frac{1}{\bar{d}} \left(1-\frac{Cs}{T_{t+1}}\right) Z_t \D \1^\top + \frac{C}{\bar{d} ~ T_{t+1}}\chi_{t+1} \D \1^\top \nonumber \\
&=& \bar{Z}_t + \frac{C}{\bar{d} T_{t+1}} \left( \chi_{t+1} - s Z_t \right) \D \1^\top  \nonumber \\
&=& \bar{Z}_t + \frac{C}{\bar{d} ~ T_{t+1}} \left( E[\chi_{t+1} | \FF_t] - s Z_t \right) \D \1^\top +  \frac{C}{\bar{d} ~ T_{t+1}} \Delta \bar{M}_{t+1},
\end{eqnarray}
where $\Delta \bar{M}_{t+1}= (\chi_{t+1} - E[\chi_{t+1} | \FF_t])\D \1^\top$ is a bounded martingale difference. Note that
\begin{eqnarray} \label{avg_condexp}
\left( E[\chi_{t+1} | \FF_t] - s Z_t \right) \D \1^\top &=& s(\1-Z_t \A\D^{-1}) \D \1^\top - \bar{d}s \bar{Z}_t \nonumber \\
&=& s ( \bar{d} - Z_t \A \1^\top - \bar{d} \bar{Z}_t) \nonumber \\
&=& s\bar{d} (1-2 \bar{Z}_t),
\end{eqnarray}
since $\A \1^\top = \D \1^\top $. From the theory of stochastic approximation, we know that the iterates of \eqref{avgZ_recur} converge almost surely to the stable zeroes of $\dot{x}_t =  s\bar{d} (1-2 x_t)$. Thus, $\bar{Z}_t \to 1/2$ almost surely. Since the graph is bipartite, we have that $\bar{d_v}=\bar{d_w}= \frac{\bar{d}}{2}$. Hence, $\bar{Z}_t^v+\bar{Z}_t^w=2\bar{Z}_{t}  \to 1$ almost surely.
\end{proof}

 We now prove a useful lemma for the moments of certain linear functions of $\{ Z_t(i) \}_{i \in [N]}$. Let $\mathcal{M}_{N, m}$ be the space of all $N \times N$ matrices with the property that for each column of the matrix, the sum of the first $m$ terms of that column is equal to the sum of the last $N-m$ terms. That is, 
$$ \mathcal{M}_{N, m}  = \{ M \in \mathbb{R}^{N \times N} : e_m M= (\1-e_m) M \}, $$
where $\mathbb{R}^{N \times N}$ denotes the set of all $N \times N$ matrices over $\mathbb{R}$. 

\

\begin{lemma}\label{M_N} Let $\Phi_{t} =Z_t Q$ such that $Q \in \mathcal{M}_{N, m}$.  The following hold under assumptions (I) and (II).
\begin{enumerate}
\item Suppose that, in addition, $Q$ is such that $e_m Q=\0$. Then, $E[\Phi_{t}] \sim \frac{1}{t^{\theta}}$, where $\theta$ is as defined in \eqref{theta}.

\item $Var(\Phi_{t}) \sim \frac{1}{t^\epsilon} $ for some $\epsilon > 0$. 
\end{enumerate}
\end{lemma}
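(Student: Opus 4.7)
The plan is to derive a closed-form recursion for $E[\Phi_t]$ and a martingale decomposition for $\Phi_t - E[\Phi_t]$, then analyze both via the spectral decomposition of $B := I + \A\D^{-1}$ guaranteed by assumption (II). First I would set $\tilde{Z}_t := Z_t - \tfrac{1}{2}\1$ and rewrite \eqref{FTSRrecur} as $\tilde{Z}_{t+1} = \tilde{Z}_t G_t + \tfrac{C}{T_{t+1}} \Delta M_{t+1}$, where $G_t := I - \tfrac{Cs}{T_{t+1}} B$ and $\Delta M_{t+1}$ is a bounded martingale difference; the inhomogeneous term vanishes because $\1 \A\D^{-1} = \1$ forces $\tfrac{1}{2}\1 B = \1$. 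Iterating gives
\[
\tilde{Z}_t \;=\; \tilde{Z}_0 \prod_{k=0}^{t-1} G_k \;+\; \sum_{k=1}^{t} \frac{C}{T_k}\, \Delta M_k \prod_{j=k}^{t-1} G_j.
\]
Using (II), $B = P \Lambda P^{-1}$ with $\Lambda = \mathrm{diag}(\lambda_1, \ldots, \lambda_N)$, so $\prod_{k=0}^{t-1} G_k = P D_t P^{-1}$ with $(D_t)_{ii} = \prod_{k=1}^{t} \!\bigl(1 - \tfrac{Cs\lambda_i}{T_k}\bigr)$. Since $T_k = T_0 + Csk$, the standard estimate $\log(1-x) = -x + O(x^2)$ combined with $\sum 1/k \sim \log t$ yields $(D_t)_{ii} \sim t^{-\lambda_i}$.

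The structural observation driving both parts is this: on a bipartite graph the row vector $u := e_m - (\1 - e_m) = (1,\ldots,1,-1,\ldots,-1)$ is a left null eigenvector of $B$ (direct check: $u \A\D^{-1} = -u$ by bipartiteness, so $uB = 0$), and by Lemma~\ref{ADinverse}(3) it spans the entire left null space. Thus the first row $q_1$ of $P^{-1}$ is proportional to $u$. Crucially, any $Q \in \mathcal{M}_{N,m}$ satisfies $uQ = e_m Q - (\1 - e_m)Q = 0$ \emph{by the very definition of} $\mathcal{M}_{N,m}$, so the $\lambda_1 = 0$ contribution of both $(\prod G_k)Q$ and $\Pi_k Q := \bigl(\prod_{j=k}^{t-1} G_j\bigr) Q$ vanishes identically, and every surviving spectral component carries an eigenvalue $\lambda_i \geq \theta$.

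For part (1), the extra hypothesis $e_m Q = \0$ combined with $Q \in \mathcal{M}_{N,m}$ gives $\1 Q = 2 e_m Q = \0$, hence $E[\Phi_t] = E[\tilde{Z}_t]\, Q = \tilde{Z}_0 \bigl(\prod G_k\bigr) Q = \sum_{i \geq 2} (D_t)_{ii}\,(\tilde{Z}_0 p_i)(q_i Q)$, each summand of order $t^{-\lambda_i}$ with $\lambda_i \geq \theta$, so $E[\Phi_t] \sim t^{-\theta}$. For part (2), martingale orthogonality and boundedness of $\Delta M_k$ give (entrywise) $\Var(\Phi_t) \lesssim \sum_{k=1}^{t} \tfrac{1}{k^2} \|\Pi_k Q\|^2$; the preceding spectral analysis yields $\|\Pi_k Q\| \lesssim (k/t)^\theta$ (this uses only $Q \in \mathcal{M}_{N,m}$, no extra hypothesis), so $\Var(\Phi_t) \lesssim t^{-2\theta} \sum_{k=1}^{t} k^{2\theta - 2} = O\bigl(t^{-\min(1, 2\theta)}\bigr)$, up to a possible $\log t$ factor when $2\theta = 1$, which gives the claimed $t^{-\epsilon}$ decay for some $\epsilon > 0$.

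The main obstacle is the spectral/combinatorial bookkeeping: identifying $u$ explicitly, recognizing that membership in $\mathcal{M}_{N,m}$ is precisely the condition $uQ = \0$ (so that $\mathcal{M}_{N,m}$ is exactly the left-annihilator of the zero-eigenvalue subspace), and tracking the non-normal product $\prod G_k$ through a basis where $P$ need not be orthogonal, so one has to work with the eigenexpansion rather than with operator norms of Hermitian matrices. Once the $\lambda_1 = 0$ component is ruled out, everything reduces to the routine scalar asymptotics $\prod_{k}(1 - \lambda_i / k) \sim k^{-\lambda_i}$ and a standard martingale variance sum.
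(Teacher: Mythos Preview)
Your argument is correct and follows the same underlying strategy as the paper: diagonalize $B = I + \A\D^{-1}$, observe that the left null eigenvector is (a scalar multiple of) $u = e_m - (\1 - e_m)$, recognize that $Q \in \mathcal{M}_{N,m}$ is exactly the condition $uQ = \0$, and then use the Euler asymptotic $\prod_k (1 - \lambda/k) \sim t^{-\lambda}$ on the surviving spectral components.

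There are two mild stylistic differences worth noting. First, you center at $\tfrac12\1$ at the outset, which kills the inhomogeneous term in one line; the paper instead iterates the raw recursion and then shows that the accumulated inhomogeneous term $\sum \tfrac{Cs}{T_{i+1}}\1(\prod H_j)Q$ vanishes because $\1 Q = \0$. Your route is cleaner. Second, for the variance you write the martingale decomposition of $\Phi_t - E[\Phi_t]$ and bound the orthogonal sum directly, whereas the paper applies the law of total variance iteratively to obtain an exact expression $Q^\top (P^{-1})^\top \Delta_t P^{-1} Q$ and then computes the entrywise asymptotics $\Delta_t(p,q) \sim t^{-(\lambda_p+\lambda_q)}$. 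The paper's route gives the precise entrywise rates in \eqref{rates}; your route gives only the upper bound $O(t^{-\min(1,2\theta)})$, which is all the lemma claims. Either is sufficient here, and your martingale bound is shorter.
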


\begin{proof}
\begin{enumerate}

\item From \eqref{FTSRrecur}, we get
\begin{eqnarray}\label{conditionalexpectation}
E[Z_{t+1} | \FF_t] &=& Z_t\left(1-\frac{Cs}{T_{t+1}}\right) +\frac{Cs(\1-Z_t \A \D^{-1})}{T_{t+1}} \nonumber \\
&=& Z_t\left(I-\frac{Cs}{T_{t+1}}(I+\A \D^{-1})\right) +\frac{Cs\1}{T_{t+1}}.
\end{eqnarray}
This implies
\begin{equation} \label{Exp_Phi}
E[\Phi_{t+1} ] = Z_{0}\left(\prod_{i=0}^{t} H_i \right) Q  + \sum_{i=0}^{t} \frac{Cs\1}{T_{i+1}}  \left(\prod_{j=i+1}^{t} H_j \right) Q,
\end{equation}
where $H_j= I-\frac{Cs}{T_{j+1}}(I+\A \D^{-1})$ for $ j\geq 1$. Now, using $\1 \A \D^{-1} = \1$, we get that the second term in \eqref{Exp_Phi} is zero. Indeed,
\begin{eqnarray*} 
\frac{Cs \1}{T_{i+1}} \left[ \prod_{j=i+1}^t  \left(I-\frac{Cs}{T_{j+1}} (I + \A \D^{-1} ) \right)\right] Q &=& \frac{Cs}{T_{i+1}}\left(\1-\frac{Cs}{T_{i+2}} (\1 + \1) \right) \left[\prod_{j=i+2}^t \left(I-\frac{Cs}{T_{j+1}} (I + \A \D^{-1} ) \right)\right] Q \nonumber \\
&=& \frac{Cs}{T_{i+1}}  \left(1-\frac{2Cs}{T_{i+2}}  \right) \1 \left[ \prod_{j=i+2}^t \left(I-\frac{Cs}{T_{j+1}} (I + \A \D^{-1} ) \right)\right] Q \nonumber \\
&=& \frac{Cs}{T_{i+1}} \left[ \prod_{j=i+1}^t   \left(1-\frac{2Cs}{T_{j+1}}  \right)\right] \1 Q \\
&=& \0. 
\end{eqnarray*}
Using the diagonalization of $I+\A \D^{-1}$, the first term in \eqref{Exp_Phi} can be rewritten as \\ $Z_{0} P \left[ \prod_{i=0}^{t}  \left(I-\frac{Cs}{T_{i+1}}\Lambda\right)\right] P^{-1} Q $. Using Euler's approximation, we have that, for $ 2 \leq j \leq N$, $\prod_{i=0}^{t} \left(1-\frac{Cs}{T_{i+1}}\lambda_j\right) \sim t^{-\lambda_j}$. Hence, as $t \rightarrow \infty$
\begin{equation} 
\prod_{i=0}^{t} \left(I-\frac{Cs}{T_{i+1}}\Lambda\right) \to e_1^\top e_1 = \begin{pmatrix}
    1 & 0 & 0 & \hdots & 0 \\
    0& 0 & 0 & \hdots & 0\\
    \vdots &  \vdots & \vdots &  \vdots  & \vdots \\
    0 & 0 & 0 & \hdots & 0 \\
   \end{pmatrix}.
  \end{equation}
%  \color{red} fill the matrix entries appropriately. See the directed graphs section \color{black}

Since the first row of $P^{-1}$ is the left eigenvector of the matrix $I+\A \D^{-1}$ corresponding to the eigenvalue $0$ and is of the form $\alpha(\1-2e_m)$ for some $\alpha \in \mathbb{R}$, we get that
\begin{equation}   \label{Exp_firstterm_limit}
e_1^\top e_1 P^{-1} Q = \0_{N \times N},
  \end{equation}
because $Q \in \mathcal{M}_{N, m}$. Thus, $E[\Phi_t] \to \0$ as $t \to \infty$, with rate $\frac{1}{t^{\theta}}$, (since  $\prod_{i=0}^{t} \left(1-\frac{Cs}{T_{i+1}}\lambda_j\right) \sim t^{-\lambda_j}$ for $j \geq 2$).

\

\item

From \eqref{FTSRrecur}, we get
\begin{equation*}
Var(Z_{t+1}| {\mathcal{F}}_t) =\frac{C^{2}}{T^2_{t+1}}Var(\chi_{t+1}| {\mathcal{F}}_t).
\end{equation*}
Note that for $1 \leq j \leq N$, $ E[Var(\chi_{t+1}(j)| {\mathcal{F}}_t)]= E[Var(Y_{t+1}(j)| {\mathcal{F}}_t)]$, and using the conditional distribution of $Y_{t+1}(j)$ from \eqref{Y} we get
 \begin{eqnarray} 
 E[Var(Y_{t+1}(j)| {\mathcal{F}}_t)]&= &\frac{s(s-1)\sum_{l \in N(j)}E[{Z_t(l)}^2] +  s\sum_{l \in N(j)}E[{Z_t(l)}]}{d_j} \nonumber \\
 &&- \left( \frac{s\sum_{l \in N(j)}E[{Z_t(l)}]}{d_j} \right)^{2} \eqqcolon \gamma_t(j). \label{gamma_t}
 \end{eqnarray}

%We remark that $\gamma_t(j)$ is different in case of sampling without replacement, however it is immaterial and does not affect the final result. So, we skip the details for the sampling without replacement case and do not state the explicit expression of $\gamma_t(j)$ for that case.

Thus, we get
\begin{equation} \label{cond_Var}
 E[Var(Z_{t+1}| {\mathcal{F}}_t)] =\frac{C^{2}}{{T_{t+1}}^2}
 \begin{pmatrix}
    \gamma_t(1) & & \\
    & \ddots & \\
    & & \gamma_t(N)
  \end{pmatrix} \eqqcolon \frac{C^{2}}{{T_{t+1}}^2} G_t.
 \end{equation}

From \eqref{conditionalexpectation} and \eqref{cond_Var}, we get
\begin{eqnarray*}
Var(Z_{t+1}) &=& E[Var(Z_{t+1}| {\mathcal{F}}_t)]  + Var(E[Z_{t+1} | \FF_t]) \\
&=&  \frac{C^{2}}{{T_{t+1}}^2} G_t + H_t^\top Var(Z_t)H_t,
\end{eqnarray*}
where $H_t= I-\frac{Cs}{T_{t+1}}(I+\A \D^{-1})$. We have
\begin{eqnarray*} 
Var(\Phi_{t+1}) &=& Q^\top Var(Z_{t+1}) Q \\
&=& Q^\top \left(  \frac{C^{2}}{{T_{t+1}}^2} G_t + H_t^\top Var(Z_t)H_t \right) Q \\
&=& Q^\top \left[ \left( \prod_{i=0}^{t-1} H_{t-i}^\top \right) Var(Z_1) \left( \prod_{i=1}^t H_i \right) + \sum\limits_{k=1}^t \frac{C^2}{T_{k+1}^2} \left( \prod_{i=0}^{t-k-1} H_{t-i}^\top \right) G_k \left( \prod_{i=k+1}^t H_i   \right) \right] Q.\\
\end{eqnarray*}

Since $Var(Z_1) = \frac{C^2}{T_1^2} G(0)$, we have

\begin{equation*} 
Var(\Phi_{t+1})= Q^\top\left[ \sum\limits_{k=0}^t \frac{C^2}{T_{k+1}^2} \left( \prod_{i=0}^{t-k-1} H_{t-i}^\top \right) G_k \left( \prod_{i=k+1}^t H_i   \right) \right] Q. 
\end{equation*}
Using diagonalization of $I+\A \D^{-1}$, we get
\begin{eqnarray*} 
Var(\Phi_{t+1}) &=& Q^\top\left[ \sum\limits_{k=0}^t \frac{C^2}{T_{k+1}^2}  (P^{-1})^\top \left( \prod_{i=0}^{t-k-1} \left( I-\frac{Cs}{T_{t-i+1}} \Lambda  \right) \right) P^\top G_k P \prod_{i=k+1}^t \left( I-\frac{Cs}{T_{i+1}} \Lambda  \right) P^{-1} \right] Q \\
& =& Q^\top (P^{-1})^\top \Delta_t P^{-1} Q,
\end{eqnarray*}
where $\Delta_t \coloneqq \sum\limits_{k=0}^t \frac{C^2}{T_{k+1}^2}  \left[ \prod_{i=0}^{t-k-1} \left( I-\frac{Cs}{T_{t-i+1}} \Lambda  \right) \right] P^\top G_k P \prod_{i=k+1}^t \left( I-\frac{Cs}{T_{i+1}} \Lambda  \right) $. 

For $(p, q) \neq (1, 1)$, the $(p, q)^{th}$ element of $\Delta_t$ is given by
\begin{eqnarray*}\Delta_{t}(p,q)&=& \sum\limits_{k=0}^t \frac{C^2}{T_{k+1}^2}  \left[ \prod_{i=0}^{t-k-1} \left( 1-\frac{Cs}{T_{t-i+1}} \lambda_p  \right) \right] s_k(p, q) \prod_{i=k+1}^t \left( 1-\frac{Cs}{T_{i+1}} \lambda_q  \right),
\end{eqnarray*}
where $s_k(p, q)$ is the $(p, q)^{th}$ element of $P^\top G_k P$. Again, by using the Euler's approximation, we get
\begin{eqnarray} \label{rates}
\Delta_{t}(p,q) & \leq & C^\prime t^{-(\lambda_p+\lambda_q)} \sum\limits_{k=1}^t \frac{1}{k^{2-(\lambda_p+\lambda_q)}} \nonumber \\
&=& \begin{cases}  
\mathcal{O} \left( \frac{1}{t} \right)  & \text{ if } \lambda_p + \lambda_q >1,  \\
\mathcal{O} \left( \frac{\log t}{t} \right)  & \text{ if } \lambda_p + \lambda_q = 1,  \\
 \mathcal{O} \left( t^{-( \lambda_p + \lambda_q)} \right) & \text{ if } \lambda_p + \lambda_q <1.
\end{cases}
\end{eqnarray}
Since $\Delta_{t}(1,1)$ converges to $\sum\limits_{k=0}^\infty \frac{C^2}{T_{k+1}^2}  s_k(p, q) $, there exists a constant $c$ such that $\Delta_{t} \to c e_1^\top e_1$ as $t \to \infty$.  Since $e_1^\top e_1 P^{-1} Q = \0_{N \times N}$, $Var(\Phi_{t+1})$ converges to zero at rates given in \eqref{rates}. This concludes the proof. 
\end{enumerate}
\end{proof}
We remark that the expression for $\gamma_t(j)$ as defined in \eqref{gamma_t} in the proof above is different in case of sampling without replacement. However, since it does not affect the final result, we skip the details for the sampling without replacement case and do not state the explicit expression of $\gamma_t(j)$ for that case.

\

Before we show that the urns synchronize almost surely on each partition, we will show that the fraction of balls of either colour in each urn attains an almost sure limit. Using the convergence of quasi-martingales (see \cite{Metivier+1982}), we get the following. 

\

\begin{proposition}\label{proposition} $Z_t$ admits an almost sure limit.
\end{proposition}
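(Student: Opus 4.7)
The plan is to verify that each coordinate $\{Z_t(i)\}_{t \geq 0}$ is a bounded quasi-martingale, so that the convergence theorem from \cite{metivier2011semimartingales} applies. Since $Z_t(i) \in [0,1]$, it is enough to show
\begin{equation*}
\sum_{t \geq 0} E\bigl\lvert E[Z_{t+1}(i) - Z_t(i) \mid \FF_t]\bigr\rvert < \infty \qquad \text{for every } i \in [N].
\end{equation*}
Writing $Q := I + \A \D^{-1}$, the recursion \eqref{FTSRrecur} combined with \eqref{conditionalexpectation} gives
\begin{equation*}
E[Z_{t+1} - Z_t \mid \FF_t] = \tfrac{Cs}{T_{t+1}} (\1 - Z_t Q),
\end{equation*}
and since $T_{t+1} \sim t$, the task reduces to producing a polynomial bound $E|(\1 - Z_t Q)_i| = \mathcal{O}(t^{-\alpha})$ for some $\alpha > 0$.

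The key idea is a decomposition into a scalar ``mean'' part governed by $\bar Z_t$ and a centred ``fluctuation'' part accessible through Lemma~\ref{M_N}. Let $M := \frac{2}{\bar d}\,\D \1^\top \1$ and set $Q_1 := Q - M$. Using the bipartite identities $e_m \A \D^{-1} = \1 - e_m$ and $(\1 - e_m) \A \D^{-1} = e_m$, together with $\bar d_v = \bar d_w = \bar d/2$, one checks directly that $e_m Q_1 = (\1 - e_m) Q_1 = \0$. Hence $Q_1$ lies in $\mathcal{M}_{N, m}$ and satisfies the extra hypothesis $e_m Q_1 = \0$ demanded by Lemma~\ref{M_N}(1). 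Since $Z_t M = 2 \bar Z_t \1$, this gives the clean splitting
\begin{equation*}
\1 - Z_t Q = (1 - 2 \bar Z_t)\,\1 \;-\; Z_t Q_1.
\end{equation*}

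I would then control the two pieces separately. For $\Phi_t := Z_t Q_1$, Lemma~\ref{M_N}(1) yields $\lVert E[\Phi_t] \rVert = \mathcal{O}(t^{-\theta})$ and Lemma~\ref{M_N}(2) yields $\Var((\Phi_t)_i) = \mathcal{O}(t^{-\epsilon})$ for some $\epsilon > 0$; the elementary bound $E|X| \leq |EX| + \sqrt{\Var X}$ then gives $E|(\Phi_t)_i| = \mathcal{O}(t^{-\delta})$ with $\delta := \min(\theta, \epsilon/2) > 0$. For the scalar piece, the stochastic approximation scheme \eqref{avgZ_recur}--\eqref{avg_condexp} is the contracting Robbins--Monro recursion $\bar Z_{t+1} - \tfrac12 = (\bar Z_t - \tfrac12)(1 - 2Cs/T_{t+1}) + \tfrac{C}{\bar d\, T_{t+1}}\,\Delta \bar M_{t+1}$ with a bounded martingale-difference noise. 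Iterating the expectation gives exponentially decaying bias, and a standard variance recursion gives $\Var(\bar Z_t) = \mathcal{O}(1/t)$, whence $E|1 - 2\bar Z_t| = \mathcal{O}(t^{-1/2})$.

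Putting the two estimates together, $E|E[Z_{t+1}(i) - Z_t(i) \mid \FF_t]| = \mathcal{O}(t^{-1 - \min(1/2,\, \delta)})$, which is summable in $t$; the quasi-martingale convergence theorem then supplies the desired almost sure limit. The main obstacle I anticipate is the construction of the rank-one correction $M$: it has to simultaneously match $Z_t M$ to the scalar $2\bar Z_t \1$ arising from the drift of $\bar Z_t$ and annihilate the $\lambda_1 = 0$ direction of $Q$ so that $Q_1$ falls into the framework of Lemma~\ref{M_N} with $e_m Q_1 = \0$. Both requirements lean crucially on bipartiteness, entering through $e_m \A \D^{-1} = \1 - e_m$ and the balance $\bar d_v = \bar d_w$.
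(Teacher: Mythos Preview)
Your proposal is correct and follows essentially the same route as the paper: the splitting $\1 - Z_tQ = (1-2\bar Z_t)\1 - Z_tQ_1$ with $Q_1 = (I+\A\D^{-1}) - \tfrac{2}{\bar d}\D\1^\top\1$ is exactly (up to sign) the paper's decomposition $(\bar Z_t\1 - Z_t)(I+\A\D^{-1}) = Z_t\bigl[\tfrac{2}{\bar d}\D\1^\top\1 - I - \A\D^{-1}\bigr]$, and both proofs then feed $Q_1$ into Lemma~\ref{M_N} after checking $e_mQ_1=\0$ via the bipartite identities. The only cosmetic difference is that you bound $\Var(\bar Z_t)$ by a direct Robbins--Monro variance recursion whereas the paper invokes Lemma~\ref{M_N}(2) once more with $Q=\D\1^\top\1/\bar d$; note also that the bias $E[\bar Z_t - 1/2]$ decays like $t^{-2}$ (polynomially, via Euler's product), not exponentially, but this does not affect your conclusion.
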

\begin{proof}
From \eqref{avgZ_recur} and \eqref{avg_condexp}, we get $E[\bar{Z}_{t+1} | \FF_t] = \bar{Z}_t + \frac{2Cs}{T_{t+1}}  (1/2 - \bar{Z}_t)$. This implies
\begin{eqnarray*}
E[\bar{Z}_{t+1}-1/2] &=& E[\bar{Z}_t] - 1/2 + \frac{2Cs}{T_{t+1}}  (1/2 - E[\bar{Z}_t]) \\\nonumber
%&=& \left( 1 - \frac{2Cs}{T_{t+1}} \right) E[\bar{Z}_t - 1/2] \\
&=& \prod_{k=0}^t \left( 1 - \frac{2Cs}{T_{k+1}} \right)(\bar{Z}_0-1/2) \\ \nonumber
& \sim & \frac{1}{t^2} (\bar{Z}_0-1/2).
\end{eqnarray*}

Hence, \begin{equation}
\label{expdecay}\left|E[\bar{Z}_{t}-1/2]\right| \sim  \frac{1}{t^2} (\left|Z_0-1/2\right|). 
\end{equation}

Using \eqref{conditionalexpectation}, we have
\begin{eqnarray*} \sum_{t=0}^{\infty} E[\|E[Z_{t+1} | \FF_t]-Z_t\|] & = & Cs \sum_{t=0}^{\infty} \frac{ E[\|\1-Z_t(I+\A \D^{-1})\|]}{T_{t+1}}\\
&=& Cs \sum_{t=0}^{\infty} \frac{ E[\|\1-\bar{Z}_{t}\1(I+\A \D^{-1})-(Z_t-\bar{Z}_{t}\1)(I+\A \D^{-1})\|]}{T_{t+1}}\\
&\leq & Cs \sum_{t=0}^{\infty} \left\{ \frac{ E[\|\1-2\bar{Z}_{t}\1\|+  E[\| (Z_t- \bar{Z}_{t}\1)(I+\A \D^{-1})\|]}{T_{t+1}} \right\} \\
&\leq & Cs  \left\{ \sum_{t=0}^{\infty} \frac{ E[\|\1-2\bar{Z}_{t}\1\|}{T_{t+1}} + \sum_{t=0}^{\infty} \frac{ E[\|(Z_t- \bar{Z}_{t}\1)(I+\A \D^{-1})\|]}{T_{t+1}} \right\} \\
&\leq & Cs  \left\{ \sum_{t=0}^{\infty} \frac{2\|1\| E[|\bar{Z}_{t}-1/2|]}{T_{t+1}} + \sum_{t=0}^{\infty} \frac{ E[\| (\bar{Z}_{t}\1-Z_t)(I+\A \D^{-1})\|]}{T_{t+1}} \right\} \\
&\leq & Cs  \left\{2\|1\|\left[ \sum_{t=0}^{\infty} \frac{ E[|\bar{Z}_{t}-E[\bar{Z}_{t}]|]}{T_{t+1}} + \sum_{t=0}^{\infty} \frac{ E[\vert E[\bar{Z}_{t}-1/2] \vert]}{T_{t+1}} \right]\right.\\
&& +  \sum_{t=0}^{\infty} \frac{ E[\| (\bar{Z}_{t}\1-Z_t -E[\bar{Z}_{t}\1-Z_t])(I+\A \D^{-1})\|]}{T_{t+1}} \\
&& + \left. \sum_{t=0}^{\infty} \frac{ E[\| E[(\bar{Z}_{t}\1-Z_t)(I+\A \D^{-1})] \|]}{T_{t+1}} \right\} \\
&\leq & Cs  \left\{2\|1\|\left[ \sum_{t=0}^{\infty} \frac{ E[|\bar{Z}_{t}-E[\bar{Z}_{t}]|]}{T_{t+1}} + \sum_{t=0}^{\infty} \frac{ \vert E[\bar{Z}_{t}-1/2] \vert}{T_{t+1}} \right]\right.\\
&&+   \sum_{t=0}^{\infty} \frac{ E[\| (\bar{Z}_{t}\1-Z_t -E[\bar{Z}_{t}\1-Z_t])(I+\A \D^{-1})\|]}{T_{t+1}} \\
&&+ \left. \sum_{t=0}^{\infty} \frac{ \| E[(\bar{Z}_{t}\1-Z_t)(I+\A \D^{-1})] \|}{T_{t+1}} \right\} \\
&\leq & Cs  \left\{2\|1\|\left[ \sum_{t=0}^{\infty} \frac{ \sqrt{Var(\bar{Z}_{t})}}{T_{t+1}}  +\sum_{t=0}^{\infty} \frac{ |E[\bar{Z}_{t}-1/2]|}{T_{t+1}} \right]  \right. \\
&& +   \sum_{t=0}^{\infty} \frac{ \sqrt{\Tr(Var((\bar{Z}_{t}\1-Z_t)(I+\A \D^{-1})))}}{T_{t+1}}\\
&&+ \left.\sum_{t=0}^{\infty} \frac{ \| E[(\bar{Z}_{t}\1-Z_t)(I+\A \D^{-1})]\|}{T_{t+1}} \right\}. 
\end{eqnarray*}
Note that $\bar{Z}_{t}= \frac{Z_tD {\1}^{T}}{\bar{d}}$and $(\bar{Z}_{t}\1-Z_t)(I+\A \D^{-1}) = Z_t \left(\frac{D {\1}^{T}\1}{\bar{d}}-I \right) (I+\A \D^{-1}) = Z_t \left[ \frac{2D {\1}^{T}\1}{\bar{d}} -I-\A \D^{-1}\right]$. Since $\frac{2D {\1}^{T}\1}{\bar{d}}-( I + \A \D^{-1}),\frac{D {\1}^{T}\1}{\bar{d}} \in \mathcal{M}_{N, m}$ and $e_m \left(\frac{2D {\1}^{T}\1}{\bar{d}}-( I + \A \D^{-1})\right)  = \frac{2d_v \1}{\bar{d}} -e_m -(\1-e_m) = \0$\\( since $d_v=\frac{\bar{d}}{2}$), therefore by Lemma~\ref{M_N} we get that 
$$\sum_{t=0}^{\infty} \frac{ \| (E[\bar{Z}_{t}\1-Z_t])(I+\A \D^{-1})\|}{T_{t+1}}  <\infty, \sum_{t=0}^{\infty} \frac{ \sqrt{Var(\bar{Z}_{t})}}{T_{t+1}} < \infty  \text{ and }$$  
$$\sum_{t=0}^{\infty} \frac{ \sqrt{\Tr(Var((\bar{Z}_{t}\1-Z_t)(I+\A \D^{-1})))}}{T_{t+1}} <\infty.$$

Using \eqref{expdecay}  we also have that $\sum_{t=0}^{\infty} \frac{ |E[\bar{Z}_{t}-1/2]|}{T_{t+1}} <\infty$. Thus, $\sum_{t=0}^{\infty} E[\|E[Z_{t+1} | \FF_t]-Z_t\|] < \infty$. That is, $\{Z_t\}_{t\geq 0}$ is a quasi-martingale, and therefore, $Z_t$ has an almost sure limit.
\end{proof}

We are now ready to prove the main synchronization result for FTSR on bipartite graphs. Let $I_m$ be the diagonal matrix containing $1$ at the first $m$ diagonal positions, and $0$ elsewhere. 
\begin{proof}[Proof of Theorem~\ref{syncFTSR}]
Let $\Phi_t^v=Z_t^{v}-\bar{Z}^t_v e_m =Z_tM$ where $M=I_m- \frac{De_m^{T}e_m}{\bar{d_v}}$. Note that $e_m M=e_m\left(I_m- \frac{De_m^{T}e_m}{\bar{d_v}}\right)=e_m-e_m=\0$, and $M \in \mathcal{M}_{N, m}$. Using Lemma~\ref{M_N}, we get that
 $Var(\Phi_t^v)\rightarrow \0$ and  $E[\Phi_t^v] \rightarrow \0$, which implies that $Z^v_t - \bar{Z}^v_t e_m \xrightarrow{\mathcal{L}^2} \0$. Since $Z_t$ is a bounded random vector and since we showed above that it has an almost sure limit, we have that $Z_t \xrightarrow{a.s}Z_{\infty}$, where $Z_{\infty}$ is finite almost surely. Since ${\|\cdot \|}^{2}$ is a continuous function on $ \mathbb{R}^{N}$, using continuous mapping theorem we have that $\| \Phi_t^v \|^{2} \xrightarrow{a.s} {\|Z_{\infty}M\|}^{2}$. But we already know that $E[\|\Phi_t^v \|^{2}] \rightarrow 0$; therefore,  $E[{\|Z_{\infty}M\|}^{2}] =0$  (by bounded convergence theorem). This means that $ Z_{\infty}M =\0$  almost surely. That is, $\Phi_t^v=Z^v_t - \bar{Z}^v_t e_m \xrightarrow{a.s.} \0$. Similarly,  $Z^w_t - \bar{Z}^w_t e_{N-m} \xrightarrow{a.s.} \0$. \color{black}
\end{proof}

 We remark that the PTSR case can also be treated in the same way and the condition of regularity can be dropped. In particular, the following synchronization result holds.

\

\begin{theorem}  \label{syncPTSR} For the PTSR model with $0 \leq p <1$ under assumptions (I) and (II), $Z_t - \bar{Z_t} \1 \xrightarrow{a.s.} \0$ as $t \to \infty$. 
\end{theorem}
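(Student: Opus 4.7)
The plan is to mirror the proof of Theorem~\ref{syncFTSR}, substituting the matrix $I-\A\D^{-1}$ (which governs the PTSR dynamics) for the matrix $I+\A\D^{-1}$ used in the FTSR analysis. Under assumption~(I) with $\omega_i=1$ and $T_t(i)=T_t$ for all $i$, the PTSR recursion reads
\[
Z_{t+1} \;=\; Z_t\Bigl(I-\tfrac{Cs(1-p)}{T_{t+1}}(I-\A\D^{-1})\Bigr) \,+\, \tfrac{C}{T_{t+1}}\Delta M_{t+1},
\]
and $0$ is a simple eigenvalue of $I-\A\D^{-1}$ for any connected graph, with left null vector $\1$ (since $\A\D^{-1}$ is column stochastic) and right null vector $\D\1^{\top}$ (since $\A\1^{\top}=\D\1^{\top}$). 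Under assumption~(II), $\A\D^{-1}=P\Lambda' P^{-1}$ is diagonalizable and the remaining eigenvalues $\mu_j$ of $\A\D^{-1}$ are strictly less than $1$; set $\theta':=\min\{1-\mu_j:\mu_j\neq 1\}>0$.

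First I would identify the correct martingale. Let $\bar{Z}_t:=Z_t\D\1^{\top}/\bar{d}$. The identity $(I-\A\D^{-1})\D\1^{\top}=\D\1^{\top}-\A\1^{\top}=\0$ gives $E[\bar{Z}_{t+1}\mid\FF_t]=\bar{Z}_t$, so $\{\bar{Z}_t\}$ is a bounded $[0,1]$-valued martingale and converges almost surely to some random $\bar{Z}_\infty$. Unlike the FTSR case, the limit is random and no bipartite or regularity assumption enters.

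Next I would prove the PTSR analog of Lemma~\ref{M_N} with the test class $\mathcal{M}_N':=\{Q\in\mathbb{R}^{N\times N}:\1 Q=\0\}$ replacing $\mathcal{M}_{N,m}$. For $\Phi_t:=Z_tQ$ with $Q\in\mathcal{M}_N'$, iterating the conditional expectation and diagonalizing $\A\D^{-1}$, the product $\prod_{i=0}^{t}\bigl(I-\tfrac{Cs(1-p)}{T_{i+1}}(I-\A\D^{-1})\bigr)$ converges to the rank-one projection onto the $0$-eigenspace of $I-\A\D^{-1}$; the corresponding row of $P^{-1}$ is proportional to $\1$, and since $\1 Q=\0$ this leading contribution is annihilated. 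The non-zero eigenvalues $1-\mu_j$ contribute Euler-type decay factors, giving $E[\Phi_t]=\mathcal{O}(t^{-(1-p)\theta'})$. The identical annihilation mechanism applied to the telescoping formula for $\Var(Z_{t+1})$ (exactly as in the second part of Lemma~\ref{M_N}) yields $\Var(\Phi_t)=\mathcal{O}(t^{-\varepsilon})$ for some $\varepsilon>0$.

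Finally, set $M:=I-\D\1^{\top}\1/\bar{d}$; then $Z_t-\bar{Z}_t\1=Z_tM$, and $\1 M=\1-\1=\0$, so $M\in\mathcal{M}_N'$. The moment bounds give $Z_t-\bar{Z}_t\1\to\0$ in $L^2$. To upgrade this to almost sure convergence, I would re-run the quasi-martingale argument of Proposition~\ref{proposition}: since $\1(I-\A\D^{-1})=\0$, we have $E[Z_{t+1}\mid\FF_t]-Z_t=-\tfrac{Cs(1-p)}{T_{t+1}}(Z_t-\bar{Z}_t\1)(I-\A\D^{-1})$, so summability of $E[\|E[Z_{t+1}\mid\FF_t]-Z_t\|]$ reduces to the $L^1$ bounds on $Z_tM$ already established; hence $Z_t$ admits an almost sure limit $Z_\infty$, and the same continuous-mapping plus bounded-convergence step used at the end of the proof of Theorem~\ref{syncFTSR} upgrades the $L^2$ limit to $Z_t-\bar{Z}_t\1\xrightarrow{a.s.}\0$. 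The delicate point is the PTSR analog of Lemma~\ref{M_N}: the decay rates require that $\1$ is (up to scalar) the \emph{unique} left null vector of $I-\A\D^{-1}$ and that all other eigenvalues of $\A\D^{-1}$ are strictly inside the unit disk, which is precisely what connectedness together with assumption~(II) provides; no regularity of $\GG$ and, crucially, no bipartiteness restriction is needed.
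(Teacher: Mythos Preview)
Your proposal is correct and follows essentially the same route as the paper. The paper only sketches the argument, noting that the PTSR conditional expectation simplifies to $E[Z_{t+1}\mid\FF_t]=Z_t\bigl(I-\tfrac{Cs(1-p)}{T_{t+1}}(I-\A\D^{-1})\bigr)$ and that the analogs of Lemma~\ref{M_N} and the quasi-martingale Proposition~\ref{proposition} go through with ease; you have correctly filled in the details, in particular identifying the right test class $\{Q:\1 Q=\0\}$ (dictated by the left null vector $\1$ of $I-\A\D^{-1}$) and observing that $\bar{Z}_t$ is now a genuine bounded martingale, so no separate control of $|E[\bar{Z}_t-1/2]|$ is needed.
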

%\color{red} State theorem 4.1 in the similar way: include FTSR etc. in the theorem statement. \color{black}
%\color{red}
The proof involves the same arguments as above. Recalling \eqref{FTSRrecur} and using $E[\chi_{t+1} | \FF_t] =s Z_t(p I +(1-p)\A \D^{-1}) $ for PTSR model, we get
\begin{eqnarray*}
E[Z_{t+1} | \FF_t] &=& Z_t\left(1-\frac{Cs}{T_{t+1}}\right) +\frac{CE[\chi_{t+1} | \FF_t]}{T_{t+1}} \nonumber \\
&=& Z_t\left(I-\frac{Cs(1-p)}{T_{t+1}}(I-\A \D^{-1})\right),
\end{eqnarray*}

which is a much simpler recursion compared to \eqref{conditionalexpectation} above, and hence, a result analogous to Lemma~\ref{M_N} can be proved with ease, using similar arguments as above. We can also show that $Z_t$ admits an almost sure limit using the quasi-martingale argument. The rest of the proof is similar.

%Indeed, we have
%\begin{eqnarray*} \sum_{t=0}^{\infty} E[\|E[Z_{t+1} | \FF_t]-Z_t\|] & = & Cs(1-p) \sum_{t=0}^{\infty} \frac{ E[\|Z_t(I-\A \D^{-1})\|]}{T_{t+1}}\\
%& = & Cs(1-p) \sum_{t=0}^{\infty} \frac{ E[\|(Z_t-\bar{Z}_t \1)(I-\A \D^{-1})\|]}{T_{t+1}},\\
%\end{eqnarray*}
%where we have used stochasticity of $\A \D^{-1}$ in the last equation. 

\color{black}

%%%%%%%%%%%%%%%%%%%%%%%%%%%%%%%%%%%%%%%%%%%%%%%%%%%%%%%%%%%%%%%%%%%%%%%%%%%%%%%%%%%%%%%%%%%%%%%%%%%%%%%%%%%%%%%%%%%%%%%%%%%%%%%%%%%%%%%%%%%%%%%%%%%%%%%%%%%%%%%%%%%%%%%%%%%%%%%%%%%%%%%%%%%%%%%%%%%%%%%%%%%%%%%%%%%%%%%%%%%%%%%%%%%%%%%%%%%%%%%%%%%%%%%%%%%%%%%%%%%%%%%%%%%%%%%%%%%%%%%%%%%%%%%%%%%%%%%%%%%%%%%%%%%%%%%%%%%%%%        FLUCTUATIOS      %%%%%%%%%%%%%%%%%%%%%%%%%%%%%%%%%%%%%%%%%%%%%%%%%%%%%%%%%%%%%%%%%%%%%%%%%%%%%%%%%%%%%%%%%%%%%%%%%%%%%%%%%%%%%%%%%%%%%%%%%%%%%%%%%%%%%%%%%%%%%%%%%%%%%%%%%%%%%%%%%%%%%%%%%%%%%%%%%%%%%%%%%%%%%%%%%%%%%%%%%%%%%%%%%%%%%%%%%%%%%%%%%%%%%%%%%%%%%%%%%%%%%%%%%%%%%%%%%%%%%%%%%%%%%%%%%%%%%%%%%%%%%%%%%%%%%%%%%%%%%%%%%%%%%%%%

\section{Fluctuation Results}\label{Sec:Fluc}
We now state and prove a fluctuation result for FTSR and FTNR models on a specific class of graphs. We also assume that the sampling is done with replacement. 

\

\begin{theorem} Consider the Friedman-type reinforcement scheme, and assume that the sampling is done with replacement. Assume that $Z_t$ has a unique convergent point  $\frac{1}{2} \1$.  Let $\rho$ be the smallest eigenvalue of the matrix $p \A\D^{-1} + (1-p)(\A\D^{-1})^2 + I$  for the FTNR case and the smallest eigenvalue of $(1+p)I+(1-p)\A\D^{-1}$ for the FTSR case. Further, assume that the matrix $\A\D^{-1}$ is symmetric, with $\A\D^{-1} =U \Lambda' {U}^{-1}$. Then, we have
\begin{enumerate} 
\item When $\rho >\frac{1}{2}$, $\sqrt{t} \left(Z_t - \frac{1}{2} \1\right) \xrightarrow{d} \mathcal{N}\left(\0, \Sigma\right)$, 
where $ \Sigma$ equals $\frac{1}{4s}I + \frac{p}{2s} \A\D^{-1} + \frac{1-p}{2s}(\A\D^{-1})^2 $ for the FTNR case and equals $\frac{1+2p}{4s}I +\frac{1-p}{2s}\A\D^{-1}$ for the FTSR case. 

\item Let $\lambda'_1,\lambda'_2 \ldots \lambda'_N $ be the eigenvalues of $I+2p\A\D^{-1} +2(1-p)(\A\D^{-1})^2$ and $\mu_1, \mu_2 \ldots \mu_N$ be the eigenvalues of $(2p+1)I + 2(1-p)\A\D^{-1}$. Let $a_1, a_2 \ldots a_q$ and $b_1, b_2 \ldots b_r$ be indices such that $\lambda'_{a_1} = \lambda'_{a_2} = \ldots =\lambda'_{a_q}= \mu_{b_1} =\mu_{b_2}= \ldots =\mu_{b_r} =0$. When $\rho=\frac{1}{2}$, we have
\begin{equation*}
\sqrt{\frac{t}{\log t}} \left(Z_t - \frac{1}{2} \1\right) \xrightarrow{d} \mathcal{N}\left(\0, \widetilde{\Sigma}\right), 
\end{equation*}
where $\widetilde{\Sigma} =\frac{1}{4s}U B U^{-1}$ where the matrix $B$ is a diagonal matrix, defined as follows for FTSR and FTNR cases. For the FTNR case, $B$ is such that $B_{a_1,a_1}=B_{a_2,a_2} =\ldots =B_{a_q,a_q}=1$. For the FTSR case, it is such that $B_{b_1,b_1}=B_{b_2,b_2} =\ldots =B_{b_r, b_r}=1$. The rest of the diagonal terms are zero in both cases.
\end{enumerate}
\end{theorem}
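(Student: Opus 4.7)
The plan is to view the recursion for $Z_t$ as a linear stochastic approximation scheme centered at $z^* = \tfrac{1}{2}\1$ and to apply a central limit theorem for such schemes. Set $X_t = Z_t - \tfrac{1}{2}\1$. From Section \ref{Sec:prelim},
$$X_{t+1} = X_t + \frac{1}{t+1}\bigl(X_t J + r_{t+1}\bigr) + \epsilon_t, \qquad r_{t+1} := \frac{\Delta M_{t+1}}{s}(\eta I + \kappa \A)\Omega^{-1},$$
where $J = \partial h/\partial z$ equals $-((1+p)I + (1-p)\A\D^{-1})$ for FTSR and $-(I + p\A\D^{-1} + (1-p)(\A\D^{-1})^2)$ for FTNR; here the linearisation is exact because $h$ is affine in $Z_t$. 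By Lemma \ref{stable}, every eigenvalue of $J$ has non-positive real part, and the uniqueness assumption on the stable point rules out a zero eigenvalue. The eigenvalues of $-J$ are exactly the quantities whose minimum is denoted $\rho$ in the statement.

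Next I would identify the asymptotic noise covariance $\Gamma = \lim_{t\to\infty}E[r_{t+1}^\top r_{t+1}\mid \FF_t]$ at $z^*$. Since sampling is with replacement, the coordinates $Y_{t+1}(j)$ are conditionally independent across $j$, so $\mathrm{Cov}(\chi_{t+1}\mid\FF_t)$ is diagonal. Evaluating \eqref{Y} at $z^*$ shows each $Y_{t+1}(j)$ is a mixture of Bin$(s,1/2)$'s, hence itself Bin$(s,1/2)$, giving $\mathrm{Var}(Y_{t+1}(j)\mid\FF_t)\big|_{z^*} = s/4$. Therefore $\mathrm{Cov}(\chi_{t+1}\mid\FF_t)\big|_{z^*}=(s/4)I$ and, using the symmetry of $\A\D^{-1}$ hypothesised in part of the theorem,
$$\Gamma_{\mathrm{FTSR}} = \frac{1}{4s}I, \qquad \Gamma_{\mathrm{FTNR}} = \frac{1}{4s}(\A\D^{-1})^2.$$
A short continuity argument (replace $Z_t$ by $z^*$ up to an error that is $o(1)$ a.s.\ since $Z_t\to z^*$ by Theorem \ref{Friedman}) makes this limit rigorous.

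For part (1), where $\rho > 1/2$, the matrix $J + \tfrac{1}{2}I$ is Hurwitz, so the standard CLT for stochastic approximation (Duflo, Chapter 6, or Kushner--Yin, Chapter 10) applies and yields $\sqrt{t}\,X_t \xrightarrow{d} \N(0,\Sigma)$, with $\Sigma$ the unique solution of the Lyapunov equation $(J+\tfrac{1}{2}I)\Sigma + \Sigma(J+\tfrac{1}{2}I)^\top = -\Gamma$. Because the hypothesis makes $\A\D^{-1}$ symmetric, $J$, $\Gamma$, and $\Sigma$ are all polynomials in $\A\D^{-1}$ and therefore commute; the Lyapunov equation then reduces to $2(J+\tfrac{1}{2}I)\Sigma = -\Gamma$, which gives $\Sigma$ in closed form in the common eigenbasis $U$ and matches the stated expression.

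For part (2), where $\rho = 1/2$, the Hurwitz condition fails exactly on the eigenspace where $-J$ has eigenvalue $1/2$. The cleanest route is to diagonalize in the common eigenbasis $\A\D^{-1} = U\Lambda' U^{-1}$ and write $\tilde X_t = X_t U$, so that each coordinate of $\tilde X_t$ evolves according to a one-dimensional linear SA driven by a martingale with explicit asymptotic variance. For non-critical modes (eigenvalue of $-J$ strictly greater than $1/2$) the standard CLT gives $\sqrt{t}$-rate fluctuations, which are $o(\sqrt{t/\log t})$; for critical modes (where the corresponding eigenvalue of $(2p+1)I+2(1-p)\A\D^{-1}$, respectively $I+2p\A\D^{-1}+2(1-p)(\A\D^{-1})^2$, vanishes) the one-dimensional borderline CLT $x_{n+1}=(1-\tfrac{1}{2(n+1)})x_n + \tfrac{\zeta_{n+1}}{n+1}$ yields $\sqrt{n/\log n}\,x_n \xrightarrow{d} \N(0,\mathrm{Var}(\zeta_\infty))$. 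Assembling these mode-wise limits with the matrix $B$ acting as the projection onto the critical indices $\{a_i\}$ (FTNR) or $\{b_j\}$ (FTSR) recovers $\widetilde\Sigma = \tfrac{1}{4s}UBU^{-1}$. The main obstacle will be this borderline analysis: one must verify that the interaction between the faster modes (which vanish at rate $1/\sqrt{t}$) and the slower critical modes does not contribute at the $\sqrt{t/\log t}$ scale, and that the remainder $\epsilon_t$ and the martingale-difference variance fluctuations around $\Gamma$ are negligible at this finer resolution; either working mode-by-mode in the eigenbasis or appealing to a multivariate borderline CLT for SA (e.g.\ Pelletier's weak convergence results) handles this cleanly.
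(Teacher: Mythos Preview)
Your overall strategy coincides with the paper's: identify the Jacobian $J=\partial h/\partial z$, compute the limiting martingale covariance, and feed both into a CLT for linear stochastic approximation. The paper invokes Theorems~1.1 and~2.1 of Zhang \cite{MR3582813} and writes $\Sigma$ via the integral representation $\int_0^\infty e^{(J+I/2)^\top u}\,\Gamma\, e^{(J+I/2)u}\,du$, then diagonalises using $\A\D^{-1}=U\Lambda' U^{-1}$ and the assumed symmetry; your Lyapunov equation $(J+\tfrac12 I)\Sigma+\Sigma(J+\tfrac12 I)^\top=-\Gamma$ is just the differential form of that same identity. For part~(2) the paper again quotes the companion formula $\widetilde\Sigma=\lim_{t\to\infty}\frac{1}{\log t}\int_0^{\log t}e^{(J+I/2)^\top u}\,\Gamma\, e^{(J+I/2)u}\,du$ from the same reference, which immediately yields the projection $B$ onto the zero-eigenspace of $-2J-I$. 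This replaces your mode-by-mode borderline analysis and sidesteps the issue you flag of showing that the fast modes do not contaminate the critical ones at the $\sqrt{t/\log t}$ scale; citing the ready-made result is considerably shorter.

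One substantive discrepancy deserves attention. You take $\Gamma_{\mathrm{FTNR}}=\frac{1}{4s}(\A\D^{-1})^2$, correctly carrying the post-multiplication $\A\D^{-1}$ present in the noise $r_{t+1}=\frac{\Delta M_{t+1}}{s}\A\D^{-1}$. The paper instead defines $\Gamma$ as the limiting covariance of $\Delta M_{t+1}/s$ alone and sets $\Gamma=\frac{1}{4s}I$ for \emph{both} FTSR and FTNR, arguing that ``$\Gamma$ is the same for both \ldots\ since it only depends on sampling, and not reinforcement.'' With your $\Gamma_{\mathrm{FTNR}}$, the Lyapunov solution does \emph{not} reduce to the $\Sigma$ in the statement; the claim that the closed form ``matches the stated expression'' is precisely the step you left unchecked, and it is where the mismatch surfaces. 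So either align your noise covariance with the paper's convention, or carry your computation through and be prepared to land on a different limiting variance than the one asserted.
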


\

\begin{remark} 
From Theorem~\ref{Friedman}, we know the exact conditions for $Z_t$ to have the unique limit $\frac{1}{2} \1$. Also note that using an argument similar to that in Lemma~\ref{ADinverse}, we get that the eigenvalues of matrices  $(1+p)I+(1-p)\A\D^{-1}$ and $p \A\D^{-1} + (1-p)(\A\D^{-1})^2 + I$ are all real. Further, note that $\Lambda^\prime = \Lambda - I$, where $\Lambda = Diag (\lambda_1, \dots, \lambda_N)$ is as defined in the earlier sections. 
\end{remark}

\begin{proof} We use Theorems 1.1 and 2.1 from \cite{MR3582813}. From Lemma~\ref{stable}, recall that 
\begin{equation*} \frac{\partial h}{\partial z}=
   \begin{cases}   -(I +p\A\D^{-1} +(1-p)(\A\D^{-1})^2) & \ \text{for the FTNR case, } \\
                    -((1+p)I +(1-p)\A\D^{-1}) & \ \text{for the FTSR case.}\\
\end{cases}
\end{equation*}

Define $\Gamma \coloneqq \lim\limits_{t \to \infty} \frac{1}{s^2} E [{\Delta M_{t+1}}^{T}\Delta M_{t+1} |{\mathcal{F}}_{t}]$, where $\{\Delta M_{t}\}_{t\geq 0}$ is the martingale difference sequence as defined in Section~\ref{Sec:prelim} . Note that $\Gamma$ is the same for both FTSR and FTNR models since it only depends on sampling, and not reinforcement.

Now, for $i \neq j$, since the event of sampling from urn $i$ is independent of sampling from urn $j$, we have

\begin{equation*}
(E[{\Delta M_{t+1}}^{T}\Delta M_{t+1} | {\mathcal{F}}_t])_{i,j} = E[(\chi_{t+1}(i)-   E [\chi_{t+1}(i) |{\mathcal{F}}_t])(\chi_{t+1}(j)-   E [\chi_{t+1}(j) |{\mathcal{F}}_t])|{\mathcal{F}}_t]=0.
\end{equation*}
For $1 \leq i \leq N$,
 \begin{align} \label{Gamma} 
 \lim_{t \to \infty}  (E[{\Delta M_{t+1}}^{T}\Delta M_{t+1} | \mathcal{F}_t])_{i,i} &=& \lim_{t \to \infty} E[ (Y_{t+1}(i))^2| {\mathcal{F}}_t] + s^2 \lim_{t \to \infty} \left(  p Z_t(i) + \frac{1-p}{d_i} \sum\limits_{l \in N(i)}Z_t(l)\right)^2  \nonumber \\
 &&  - 2s \lim_{t \to \infty}\left(  p Z_t(i) + \frac{1-p}{d_i} \sum\limits_{l \in N(i)} Z_t(l)\right) \times \lim_{t \to \infty} E[Y_{t+1}(i)| {\mathcal{F}}_t].
\end{align}
We have $\lim\limits_{t \to \infty} E[Y_{t+1}(i) | {\mathcal{F}}_t] = \lim\limits_{t \to \infty} s \left(  p Z_t(i) + \frac{1-p}{d_i} \sum\limits_{l \in N(i)} Z_t(l)\right) = \frac{s}{2}$. Further,
\begin{align*} E[(Y_{t+1}(i))^2 | {\mathcal{F}}_t] & = \left\{s(s-1)(Z_t(i)^2 + sZ_t(i) \right\} p  + \frac{1-p}{d_i}  \left\{ s(s-1)\sum\limits_{l \in N(i)} (Z_t(l))^2 + s \sum\limits_{l \in N(i)} Z_t(l) \right\}. \\
\end{align*}
This gives $\lim_{t \to \infty}E[(Y_{t+1}(i))^2 | {\mathcal{F}}_t ] = \frac{s(s+1)}{4}$. Now, substituting these in \eqref{Gamma}, we get 
\begin{equation*} 
\lim_{t \to \infty} \left( E[{\Delta M_{t+1}}^{T}\Delta M_{t+1} | {\mathcal{F}}_t] \right)_{i,i} = \frac{s(s+1)}{4} + \frac{s^2}{4} - 2s\left(\frac{s}{2}\right) \left(\frac{1}{2}\right) = \frac{s}{4}. 
\end{equation*}
Thus,
 \begin{equation*} 
 \Gamma = \frac{s}{4s^2} I_{N \times N} = \frac{1}{4s} I_{N \times N}.
 \end{equation*}
For the FTNR case, using $\A \D^{-1}=U{\Lambda}' {U}^{-1}$ and the symmetry of $\A\D^{-1}$, for $\rho > 1/2$ we get
\begin{eqnarray*} 
\Sigma &=& \int_{0}^{\infty} {(e^{-(-  \frac{\partial h}{\partial z}(\frac{1}{2}\1)- \frac{I}{2})u})}^{T}    \Gamma e^{-(-  \frac{\partial h}{\partial z}(\frac{1}{2}\1)- \frac{I}{2})u } du \\
&=& \frac{1}{4s}\int_{0}^{\infty} {(e^{-(p\A\D^{-1} + (1-p)(\A\D^{-1})^2 + \frac{I}{2} )u})^{T} (e^{-(p\A\D^{-1} + (1-p)(\A\D^{-1})^2 + \frac{I}{2} )u}) } \,du \ \nonumber\\
&=&\frac{1}{4s}\int_{0}^{\infty} U{(e^{-(p\Lambda' + (1-p)(\Lambda')^2 + \frac{I}{2} )u}){U}^{-1}{U} (e^{-(p\Lambda' + (1-p)(\Lambda')^2 + \frac{I}{2} )u}) }{U}^{-1} \,du \ \nonumber\\
&=&\frac{1}{4s} U \int_{0}^{\infty} {(e^{-(2p \Lambda'  + 2(1-p){\Lambda' }^2 + I )u})}\,du \ \ {U}^{-1} \\
& =&\frac{1}{4s} U(2p \Lambda'  + 2(1-p){\Lambda' }^2 + I) {U}^{-1} \\
& = &\frac{1}{4s}I + \frac{p}{2s} \A\D^{-1} + \frac{1-p}{2s}(\A\D^{-1})^2. \\
\end{eqnarray*}
A similar computation gives us the corresponding $\Sigma$ for the FTSR case.

\

\noindent For $\rho=\frac{1}{2}$, note that in the case of the FTNR model, since the smallest eigenvalue of the matrix $I+p\A\D^{-1} +(1-p)(\A\D^{-1})^2$ is equal to  $1/2$, we have that the eigenvalues of the matrix $I+2p\A\D^{-1} +2(1-p)(\A\D^{-1})^2$ are all non-negative. Hence,
\begin{eqnarray*} \widetilde{\Sigma} &= &\lim_{t \to \infty}  \frac{1}{\log t}\int_{0}^{\log t} {(e^{-(-  \frac{\partial h}{\partial z}(\frac{1}{2} \1)- \frac{I}{2})u})}^{T}    \Gamma e^{-(-  \frac{\partial h}{\partial z}(\frac{1}{2} \1)- \frac{I}{2})u }\,du \  \\
&=& \lim_{t \to \infty}  \frac{1}{4s \log t} U \int_{0}^{\log t} {(e^{-(2p \Lambda'  + 2(1-p){\Lambda' }^2 + I )u})}\,du  {U}^{-1}.\\
\end{eqnarray*}
Since, for $y>0$, $\lim_{t \to \infty}  \frac{1}{\log t}\int_{0}^{\log t} {e^{-yu}}\,du = \lim_{t \to \infty}\frac{1-t^{-y}}{y \log t}=0$ and since $\lim_{t \to \infty}  \frac{1}{\log t}\int_{0}^{\log t} {e^{-yu}}\,du =1$ for $y=0$, the second part of the statement of the theorem clearly holds for the FTNR case. For the FTSR case, the eigenvalues of $(2p+1)I+2(1-p)\A\D^{-1}$ are non-negative and the rest of the computation is similar.
\end{proof}
A similar result can also be obtained for the FTSNR model (albeit with a few extra assumptions than those used below), but we desist from explicitly stating it here. 

%%%%%%%%%%%%%%%%%%%%%%%%%%%%%%%%%%%%%%%%%%%%%%%%%%%%%%%%%%%%%%%%%%%%%%%%%%%%%%%%%%%%%%%%%%%%%%%%%%%%%%%%%%%%%%%%%%%%%%%%%%%%%%%%%%%%%%%%%%%%%%%%%%%%%%%%%%%%%%%%%%%%%%%%%%%%%%%%%%%%%%%%%%%%%%%%%%%%%%%%%%%%%%%%%%%%%%%%%%%%%%%%%%%%%%%%%%%%%%%             DIRECTED GRAPHS      %%%%%%%%%%%%%%%%%%%%%%%%%%%%%%%%%%%%%%%%%%%%%%%%%%%%%%%%%%%%%%%%%%%%%%%%%%%%%%%%%%%%%%%%%%%%%%%%%%%%%%%%%%%%%%%%%%%%%%%%%%%%%%%%%%%%%%%%%%%%%%%%%%%%%%%%%%%%%%%%%%%%%%%%%%%%%%%%%%%%%%%%%%%%%%%%%%%%%%%%%%%%%%%%%%%%%%%%%%%%%%%%%%%%%%%%%%%%%%%%%

\section{Friedman-Type Urns on Directed Graphs} \label{Sec:Dir}
The sampling and reinforcement dynamics described in section~\ref{Sec:prelim} has a natural  analogue on directed graphs. In this case, for each urn, balls are sampled balls from the in-neighbourhood of that urn and the reinforcement is done (to self, out-neighbours or both) depending on the configuration of the drawn sample. We define the in-neighbours and out-neighbours of the $i^{th}$ vertex as $V_i^{in} \coloneqq \{ v : v \to i \}$ and $V_i^{out} \coloneqq \{ v : i \to v \}$, where $u \to w$ denotes the existence of a directed edge from $u$ to $w$. Further, we define the in-degree of $i$ as $d_i^{in} = \vert V_i^{in} \vert$. The $(i, j)^{th}$ entry of the adjacency matrix is given by $\mathbb{I}_{\{i \to j\}}$. We assume that all the vertices have a non-zero in-degree and that there are no self-loops. 
%The rest of the process remains the same. Note that all the recursions above remain the same except that the matrix $\D$ gets replaced by $\D_{in}$ (the diagonal matrix consisting of in-degrees of each urn on the diagonal). 

We extend the convergence result in Theorem~\ref{Friedman} to the case of weakly connected directed graphs with no self-loops. Assume that the directed graph $\mathcal{G}$ consists of $r$ strongly connected components. Without loss of generality (after a renumbering of vertices, if required), assume that these components $\mathcal{G}_1,\mathcal{G}_2 \ldots \mathcal{G}_r$ are such that for any directed edge $(l,m)$ between the components, if $l \in \mathcal{G}_i$ and $m \in \mathcal{G}_{j}$ for $ 1 \leq i,j \leq r$, then $i<j$. Since the graph is weakly connected, for every component $\mathcal{G}_j$ with $j > 1$, there exists a $k < j$ such that there is at least one directed edge from a vertex in $G_k$ to a vertex in $\mathcal{G}_j$. A similar recursion for $Z_t$ can be written with $\D$ replaced by $\D_{in}$, the diagonal matrix consisting of in-degrees of each vertex/urn on the diagonal. We restrict our discussion to Friedman-type reinforcement. Similar to \eqref{h functions}, for the corresponding stochastic approximation scheme on directed graphs, we have

\begin{equation} 
h(Z_t)  =  \begin{cases} 
 \1 -Z_t((1+p)I + (1-p)\A \D_{in}^{-1}) & \text{for the FTSR model,}  \\
  \1 -Z_t(I + p\A \D_{in}^{-1}+ (1-p)(\A \D_{in}^{-1})^2 ) & \text{for the FTNR model,} \\
 \1- Z_t\left[(pI + (1-p)\A \D_{in}^{-1})(\A+I)(I+\D_{in})^{-1} +I \right]  & \text{for the FTSNR model.}  
\end{cases}
 \end{equation}
To obtain conditions for convergence, we classify cases for which the matrix $-\nabla h(Z_t)$ %(that equals $(1+p)I + (1-p)\A \D_{in}^{-1}$ for the FTSR case, $I + p\A \D_{in}^{-1}+ (1-p)(\A \D_{in}^{-1})^2$ for the FTNR case, and $(pI + (1-p)\A \D_{in}^{-1})(\A+I)(I+\D_{in})^{-1} +I$ for the FTSNR case) 
is invertible and the corresponding ODE $\dot{z} = h(z)$ has a unique stable limit point. For $ 1 \leq i \leq r$, define $\A_i$ and $\D_{in}^{(i)}$ as the adjacency matrix and the in-degree matrix restricted to $\mathcal{G}_i$. Given the above decomposition of the graph, $-\nabla h(Z_t)$ becomes an upper triangular block diagonal matrix of the form 
$$\begin{pmatrix}
    h_1(\A_1,\D_{in}^{(1)}) & * & * & \hdots & \hdots & * \\
     0 & h_2(\A_2,\D_{in}^{(2)}) & *   & * &  \hdots & * \\
      0 & 0  & h_3(\A_3,\D_{in}^{(3)}) & * & \vdots & \vdots & \\
     \vdots & \vdots & 0 & \ddots & \hdots & * \\
     0& 0 & \vdots & 0 & h_{r-1}(\A_{r-1},\D_{in}^{(r-1)}) & * &  \\
     0 &  0 & \hdots  & 0 & 0 & h_r(\A_r,\D_{in}^{(r)})
  \end{pmatrix} $$

\noindent where, for $1 \leq i \leq r$, $h_i(\A_i,\D_{in}^{(i)})$ is a matrix that is a function of $A_i$ and $\D_{in}^{(i)}$ and depends on the type of reinforcement. For instance, for the FTSR model, $h_1(\A_1,\D_{in}^{(1)}) =(1+p)I + (1-p)\A_1 ({\D_{in}^{(1)}})^{-1}$. Note that for $2 \leq i \leq r$,  $h_i(\A_i,\D_{in}^{(i)}) - I$ is a strictly (column) sub-stochastic, irreducible matrix  (because there is at least one incoming edge to the strongly connected component $\mathcal{G}_i$); hence, $h_i(\A_i,\D_{in}^{(i)})$ is invertible for $2 \leq i \leq r$. Thus, the invertibility of  $h_1(\A_1,\D_{in}^{(1)})$ is sufficient for the invertibility of $-\nabla h(Z_t)$. We now state and prove a result analogous to Theorem \ref{Friedman}.

\

\begin{theorem}  \label{Friedman_second} For the Friedman-type reinforcement scheme and for a weakly connected directed graph (with a positive in-degree for each vertex) with $r$ strongly connected components $\mathcal{G}_1,\mathcal{G}_2 \ldots \mathcal{G}_r$ which are arranged as above (so that $\mathcal{G}_1$ has no incoming edges from any other component), as $t \rightarrow \infty$,  $Z_t \rightarrow \frac{1}{2} \1$ almost surely whenever one of the following holds.
\begin{enumerate}
\item FTSR model
 \begin{enumerate}
    \item   $0<p \leq 1$. 
     \item $p=0$ and $\mathcal{G}_1$ is a directed cycle of odd length.
   
     \end{enumerate}
\item FTNR model
\begin{enumerate}
    \item   $0 < p<1$. 
     \item $p=1$ and $\mathcal{G}_1$ is a directed cycle of odd length.
   
     \end{enumerate}
\item The model is FTSNR, regardless of the value of $p$.

\end{enumerate}
\end{theorem}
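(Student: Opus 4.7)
The plan is to follow the proof of Theorem~\ref{Friedman}: by the stochastic approximation machinery, it suffices to identify $\frac{1}{2}\1$ as the unique stable zero of the ODE $\dot z = h(z)$. Verification that $\frac{1}{2}\1$ is a zero in each of the three cases reduces, exactly as before, to the identity $\1 \A \D_{in}^{-1} = \1$, which is immediate because $\A\D_{in}^{-1}$ is column stochastic (the $j$-th column of $\A$ has entries summing to $d_j^{in}$). Consequently each of the matrices $(1+p)I + (1-p)\A\D_{in}^{-1}$, $I + p\A\D_{in}^{-1} + (1-p)(\A\D_{in}^{-1})^2$, and $(pI+(1-p)\A\D_{in}^{-1})(\A+I)(I+\D_{in})^{-1} + I$ has every column summing to $2$, giving $h(\tfrac{1}{2}\1) = \0$ on the nose.

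The remaining work is invertibility of $\nabla h$: this yields uniqueness and, combined with the directed analog of Lemma~\ref{stable} (whose proof uses only linearity of $h$ and column stochasticity of $-\nabla h - I$, both of which persist verbatim), upgrades the non-positive real-part bound on eigenvalues to strict stability --- a zero real part in that lemma's estimate forces $\lambda = 0$, which invertibility excludes. By the block upper triangular decomposition of $-\nabla h$ exhibited just before the theorem, invertibility reduces to invertibility of the single top-left block $h_1(\A_1,\D_{in}^{(1)})$, on which $B_1 := \A_1(\D_{in}^{(1)})^{-1}$ is an irreducible column stochastic matrix. The case analysis now mirrors Theorem~\ref{Friedman}. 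For FTSR with $0 < p \leq 1$, I would write $h_1 = (1+p)\bigl(I + \tfrac{1-p}{1+p} B_1\bigr)$ and apply a Neumann series since $\tfrac{1-p}{1+p} \in [0,1)$. For FTNR with $0 < p < 1$, factor $I + pB_1 + (1-p)B_1^2$ into two factors of the form $I + r_k B_1$ using the quadratic formula exactly as in the undirected proof, checking $|r_k| < 1$ in both the real and the complex sub-cases. For FTSR at $p = 0$ (equivalently FTNR at $p = 1$), the hypothesis that $\mathcal{G}_1$ is a directed cycle of odd length $n$ forces $\D_{in}^{(1)} = I$ and $\A_1$ to be a cyclic permutation matrix whose spectrum is the set of $n$-th roots of unity, none of which equals $-1$ for odd $n$, so $h_1 = I + \A_1$ is invertible. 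The FTSNR case is handled by writing $h_1 - I = (pI + (1-p)B_1)(\A_1+I)(I+\D_{in}^{(1)})^{-1}$ as a product of column stochastic matrices and arguing, as in the undirected proof, that the product is irreducible and aperiodic, so $1$ is its only unit-modulus eigenvalue and $-1$ is excluded.

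The one genuinely new step, which I expect to be the main obstacle, is establishing aperiodicity in the FTSNR case at $p = 0$ when $\mathcal{G}_1$ contains no $2$-cycle: in the undirected setting, aperiodicity was supplied by a positive diagonal entry of the product, arising from the closed walk $i \to j \to i$ automatic on any connected undirected graph, but this walk need not exist in a strongly connected digraph (the directed triangle being the simplest example). My plan is to decompose $B_1(\A_1 + I)(I + \D_{in}^{(1)})^{-1} = B_1 \A_1 (I + \D_{in}^{(1)})^{-1} + B_1 (I + \D_{in}^{(1)})^{-1}$: the two summands have support on the length-$2$ walks and the length-$1$ edges of $\mathcal{G}_1$ respectively, so every closed walk of length $L \geq 2$ in $\mathcal{G}_1$ realizes as a closed walk of length $L$ in the associated digraph of the product (using only single-edge steps) and also as a closed walk of length $L - 1$ (by replacing any consecutive pair of single-edge steps by one double-edge step). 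Since $\gcd(L, L-1) = 1$ and $\mathcal{G}_1$ being strongly connected with at least two vertices and no self-loops guarantees some cycle of length $\geq 2$, this delivers aperiodicity, rules out $-1$ from the spectrum of $h_1 - I$, and completes the proof.
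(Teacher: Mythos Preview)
Your proposal is correct and follows essentially the same route as the paper: reduce to invertibility of the top-left block $h_1(\A_1,\D_{in}^{(1)})$ via the block upper triangular structure, then handle each case as in Theorem~\ref{Friedman}, with the genuinely new work being aperiodicity of $B_1\A_1 + B_1$ for FTSNR at $p=0$. Your two cosmetic variations --- arguing invertibility of $I+\A_1$ on an odd directed cycle via its spectrum of $n$-th roots of unity rather than by a direct kernel computation, and phrasing the aperiodicity argument as ``a closed walk of length $L$ yields closed walks of lengths $L$ and $L-1$ in the augmented digraph'' rather than exhibiting the two positive diagonal entries by an explicit case split on the cycle length --- encode exactly the same facts the paper establishes, just more conceptually.
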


\begin{proof}
The proofs for 1.(a) and 2.(a) are identically like before since we only require stochasticity of $\A{\D_{in}}^{-1}$ to prove these. For 1.(b) and 2.(b), we need to show that the matrix $\D_{in} +\A$ is invertible for a directed cycle of odd length. For this case, for a column vector $v=(v_1,v_2 \ldots v_N)^{T}$ satisfying $(\D_{in} +\A)v=\0$, we have $v_N+v_1=0$ and $v_i +v_{i+1}=0$ for $1 \leq i  \leq N-1$. When $N$ is odd, this set of equations is satisfied only if $v=0$. For the third part, the proof for $0 < p \leq 1$ also follows like before. For $p=0$, we make use of the structure of the matrix $-\nabla h(Z_t)$ as shown above. We first show that $\A\D_{in}^{-1}\A +\A\D_{in}^{-1}$ is aperiodic for a strongly connected graph. 

Let the length of the directed cycle starting and ending at the vertex labelled $1$ be denoted by $\gamma$ ($\gamma \geq 2$ since self-loops are not allowed). If $\gamma =2$, the matrix above is aperiodic since ($\A\D_{in}^{-1}\A +\A\D_{in}^{-1})_{1,1} \geq (\A\D_{in}^{-1}\A)_{1,1}>0$. For $\gamma \geq 3$, first note that ($\A\D_{in}^{-1}\A +\A\D_{in}^{-1})^{\gamma}_{1,1} \geq (\A\D_{in}^{-1})^{\gamma}_{1,1} >0$. Let the cycle in question be denoted by $1 \rightarrow \ldots x \rightarrow y \rightarrow 1$. Now, $(\A\D_{in}^{-1})^{\gamma-2}_{1,x} >0$ and $(\A\D_{in}^{-1}\A)_{x,1}>0$. Hence, ($\A\D_{in}^{-1}\A +\A\D_{in}^{-1})^{\gamma-1}_{1,1} \geq (\A\D_{in}^{-1})^{\gamma-2}_{1,x}(\A\D_{in}^{-1}\A)_{x,1}>0$, and we are done with this case also. We use this fact to conclude the invertibility of $h_1(\A_1,\D_{in}^{(1)})=(\A_1 (\D_{in}^{(1)})^{-1})(\A_1+I)(I+\D_{in}^{(1)})^{-1} +I$ for the FTSNR case, which further shows that $-\nabla h(Z_t)$ is also invertible for this case.

\end{proof}

Note that for the case of FTSR, when $p=0$ (or FTNR with $p=1$) the class of graphs for which the matrix $\D_{in} +\A$ is full rank is, in fact, much larger. For example, $\D_{in} +\A$ is invertible for the directed graph shown in Figure \ref{fig:odd}. So, for a graph $\mathcal{G}$ with $\mathcal{G}_1$ as in Figure \ref{fig:odd}, $Z_t \rightarrow \frac{1}{2} \1$ almost surely as $t \rightarrow \infty$.

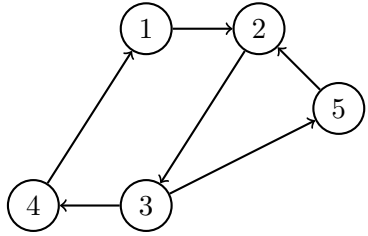
\begin{figure}[h]
\begin{center}
\begin{tikzpicture}[node distance={15mm}, thick, main/.style = {draw, circle}] 
\node[main] (1) {$1$}; 
\node[main] (2) [ right of=1] {$2$}; 
\node[main] (3) [below=2] {$3$}; 
\node[main] (4) [left of=3] {$4$}; 
\node[main] (5) [below right of=2] {$5$};

\draw[->] (1) -- (2);
\draw[->] (2) -- (3); 
\draw[->] (3) -- (4); 
\draw[->] (4) -- (1); 
\draw[->] (3)--(5);
\draw[->]  (5)--(2);

\end{tikzpicture} 
\end{center}

\caption{A strongly connected directed graph on 5 vertices containing an odd cycle $2 \to 3 \to 5 \to 2$. It can be easily verified that the corresponding matrix $\D_{in} +\A$ is invertible for this graph.}
\label{fig:odd}
\end{figure}

We leave a complete classification of the nature of limit points for the case of directed graphs for future work. In general, the limiting behaviour can be very complicated and it is very different from urns on undirected graphs. We illustrate this in the example below. 

We look at an example where $h_1(\A_1,\D_{in}^{(1)})$ (and therefore $-\nabla h(Z_t)$) is not invertible. \color{black} In order to prove a convergence result for this example, we make use of the method developed in Section~\ref{Sec:FTSR}. %The below example can be taken to be a demonstration of the generality of the arguments used there, since they can be used to prove a convergence result for the FTSR model for a specific directed graph. \color{black}

\

\begin{example} \label{ex}
Consider the graph in Figure \ref{fig:gen} with reinforcement type FTSR and $p=0$. Assume that $T_0^{i} =T_0 \ \forall  1 \leq i \leq 5$. Then, $T_t^{i} =T_t^1 \eqqcolon T_t \ \forall\  1 \leq i \leq 5,  t\geq 0$.  

\begin{figure}[h]
\begin{center}
\begin{tikzpicture}[node distance={15mm}, thick, main/.style = {draw, circle}] 
\node[main] (1) {$1$}; 
\node[main] (2) [ right of=1] {$2$}; 
\node[main] (3) [below left of=1] {$3$}; 
\node[main] (4) [ right of=3] {$4$}; 
\node[main] (5) [below right of=3] {$5$}; 

\draw[->] (1.north) to [out=25,in=155] (2.north);
\draw[->] (2.south) to [out=205,in=335] (1.south); 
\draw[->] (1) -- (3);
\draw[->] (3) -- (4); 
\draw[->] (4) -- (5); 
\draw[->] (5) -- (3); 

\end{tikzpicture} 
\end{center}
\caption{A directed graph on 5 vertices. Example~\ref{ex} discusses the FTSR model with $p=0$ on this graph. Observe that if the direction of the edge between nodes 3 and 1 is reversed, the resulting graph falls into the category of graphs that are covered under Theorem~\ref{Friedman_second}.}
\label{fig:gen}
\end{figure}
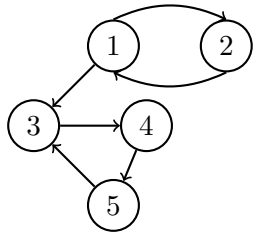
%Before we proceed to analyse the limiting behaviour of FTSR ($p=0$) on this graph, we remark that if the direction of the edge between nodes 3 and 1 is reversed, the resulting graph falls into the category of graphs that are covered under Theorem~\ref{Friedman_second}.
%
%\
%

For the graph in Figure \ref{fig:gen}, we can use neither of the stochastic approximation theorems used earlier to conclude convergence, because neither is the limit point unique nor can we write $h$ function as the gradient of some function since the graph is not regular and the adjacency matrix is also not symmetric. However, on solving for $h(z)=\0$ for this example, we get $z=(3a-1,2-3a,1-a,a,1-a)$ for $a \in \left[\frac{1}{3},\frac{2}{3}\right]$. Motivated by this, we will use arguments similar to those in Section \ref{Sec:FTSR} to conclude that $Z_t$  converges almost surely to a random vector $Z=(Z_1,Z_2\ldots Z_5)$, where $Z_1=3Z_\infty-1$, $Z_2=2-3Z_\infty$, $Z_3=1-Z_\infty$, $Z_4=Z_\infty$ and $Z_5=1-Z_\infty$, where $Z_\infty$ is a random variable supported on  $\left[\frac{1}{3},\frac{2}{3}\right]$.

It suffices to show that $Z_t$ admits an almost sure limit and that $Z_t(1)+Z_t(2) \xrightarrow{a.s.} 1, Z_t(3)+Z_t(4) \xrightarrow{a.s.} 1,Z_t(4)+Z_t(5) \xrightarrow{a.s.} 1$, and $Z_t(1)+3Z_t(3) \xrightarrow{a.s.} 2$, which is equivalent to showing  $Z_t M \xrightarrow{a.s.} (1,1,1,1,1)$ where

\begin{equation*}M=
\begin{pmatrix}
  
  1&0&0&\frac{1}{2}&0 \\
  
  1&0&0&0&0   \\
  0&1&0&\frac{3}{2}&0 \\
  0&1&1&0&1 \\
  0&0&1&0&1 \\
   
  \end{pmatrix}.
\end{equation*}

Consider the stochastic matrix  $\A\D_{in}^{-1}$. Note that $\vv =(1,1,0,0,0)^\top$ is an eigenvector of $\A\D_{in}^{-1}$ corresponding to the eigenvalue $1$. Let $\tilde{Z_t} =Z_t \vv =Z_t\A\D_{in}^{-1} \vv$. Then, recalling \ref{FTSRrecur}, we have
\begin{align}\label{dilda_Z_T}
\tilde{Z}_{t+1} &= Z_{t+1} \vv \nonumber \\
&= \left( \left(1-\frac{Cs}{T_{t+1}}\right) Z_t + \frac{C}{T_{t+1}}\chi_{t+1}\right) \vv \nonumber \\
&= \tilde{Z_{t}} +\frac{C}{T_{t+1}}\left(\chi_{t+1} \vv-s\tilde{Z_{t}}\right)\nonumber \\
&= \tilde{Z_{t}} +\frac{C}{T_{t+1}}\left((\chi_{t+1}-E[\chi_{t+1} | \FF_t]) \vv + E[\chi_{t+1} | \FF_t] \vv -s\tilde{Z_{t}}\right)\nonumber \\
&= \tilde{Z_{t}} +\frac{C}{T_{t+1}}\left((\chi_{t+1}-E[\chi_{t+1} | \FF_t]) \vv + s(\1-Z_t\A\D_{in}^{-1}) \vv-s\tilde{Z_{t}}\right)\nonumber \\
&= \tilde{Z_{t}} +\frac{C}{T_{t+1}}\left(\Delta M_t + s(\1\vv- 2\tilde{Z_{t}})\right),
\end{align}

where $\Delta M_t= (\chi_{t+1}-E[\chi_{t+1} | \FF_t])\vv$ is the martingale difference, and we have used $E[\chi_{t+1} | \FF_t]=s(\1-Z_t\A\D_{in}^{-1})$. Using stochastic approximation theory, we get $Z_t \vv \xrightarrow{a.s.} 1$ (that is, $Z_t(1) +Z_t(2) \xrightarrow{a.s.} 1$). Hence, showing $Z_t M \xrightarrow{a.s.} \1$ is equivalent to showing $Z_tQ \xrightarrow{a.s.} 0$, where $Q=M-\vv \1$. It can be verified that for this example, $0$ is a simple eigenvalue of the matrix $I+\A\D_{in}^{-1}$, and that this matrix is diagonalizable, so we can compute matrices $P$ and $\Lambda$ such that  $I+\A\D_{in}^{-1} = P \Lambda P^{-1}$. Like before, we have numbered vertices of the graph in such a way that the first diagonal element of $\Lambda$ is $0$. Recall from the proof of  Lemma~\ref{M_N} that in the case of  FTSR, when $p=0$, for a matrix $M$, if we have that $e_1^{T}e_1P^{-1}M=0$, we can conclude that $Var(Z_{t}M) \sim \frac{1}{t^{\epsilon}} $ for some $\epsilon > 0$. \color{black} If we also have that $\1M=0$, then we can additionally conclude that $E[Z_t M] \sim \frac{1}{t^{\epsilon^{\prime}}} $ for some $\epsilon^{\prime} > 0$. Now, for this example we have 
\begin{equation*}Q=
\begin{pmatrix}
  
  0&-1&-1&\frac{-1}{2}&-1 \\
  
  0&-1&-1&-1&-1  \\
  0&1&0&\frac{3}{2}&0 \\
  0&1&1&0&1 \\
  0&0&1&0&1 \\
   \end{pmatrix}, \quad  I+\A\D_{in}^{-1}=
  \begin{pmatrix}
  1&1&\frac{1}{2}&0&0 \\
  1&1&0&0&0   \\
  0&0&1&1&0 \\
  0&0&0&1&1 \\
  0&0&\frac{1}{2}&0&1 \\
   \end{pmatrix},
\end{equation*} 

\noindent and 
\begin{equation*}
 e_1^{T}e_1P^{-1}Q=
  \begin{pmatrix}
  1&0&0&0&0 \\
  0&0&0&0&0   \\
  0&0&0&0&0 \\
  0&0&0&0&0 \\
  0&0&0&0&0 \\
   
  \end{pmatrix}
  \begin{pmatrix}
  
  -\frac{1}{2}&\frac{1}{2}&\frac{1}{6}&-\frac{1}{6}&\frac{1}{6} \\
  * & * & * & * & * \\
 \vdots  & \vdots & \vdots & \vdots & \vdots  \\
  %&&&& \\
  * & * & * & * & * \\
  \end{pmatrix}
  \begin{pmatrix}
  0&-1&-1&\frac{-1}{2}&-1 \\
  
  0&-1&-1&-1&-1  \\
  0&1&0&\frac{3}{2}&0 \\
  0&1&1&0&1 \\
  0&0&1&0&1 \\
   \end{pmatrix}
   =\0_{N \times N}.
 \end{equation*}

Clearly $\1Q=\0$. So, $Z_tQ \xrightarrow{\mathcal{L}^2} \0$. Also, note that using \eqref{dilda_Z_T} we have

\begin{eqnarray*}
E[\tilde{Z}_{t+1}-1] &=& E[\tilde{Z}_t] - 1 + \frac{2Cs}{T_{t+1}}  (1 - E[\tilde{Z}_t]) \\\nonumber
%&=& \left( 1 - \frac{2Cs}{T_{t+1}} \right) E[\bar{Z}_t - 1/2] \\
&=& \prod_{k=0}^t \left( 1 - \frac{2Cs}{T_{k+1}} \right)(\tilde{Z}_0-1) \\ \nonumber
& \sim & \frac{1}{t^2} (\tilde{Z}_0-1).
\end{eqnarray*}

Hence,

\begin{equation}\label{rate_of_dilda_expectation}
\left|E[\tilde{Z}_{t}-1]\right| \sim  \frac{1}{t^2} \left(\left|\tilde{Z}_0-1\right|\right).
\end{equation}

We now show that $Z_t$ attains an almost sure limit. Indeed, we have

\begin{eqnarray*} \sum_{t=0}^{\infty} E[\|E[Z_{t+1} | \FF_t]-Z_t\|] & = & Cs \sum_{t=0}^{\infty} \frac{ E[\|\1-Z_t(I+\A \D^{-1})\|]}{T_{t+1}}\\
&=& Cs \sum_{t=0}^{\infty} \frac{ E[\|\1-\frac{\tilde{Z}_{t}}{2}\1(I+\A \D^{-1})-(Z_t-\frac{\tilde{Z}_{t}}{2}\1)(I+\A \D^{-1})\|]}{T_{t+1}}\\
&\leq & Cs \sum_{t=0}^{\infty} \left\{ \frac{ E[\|\1-\tilde{Z}_{t}\1\|+  E[\| (Z_t- \frac{\tilde{Z}_{t}}{2}\1)(I+\A \D^{-1})\|]}{T_{t+1}} \right\} \\
&\leq & Cs  \left\{\|1\|\left[ \sum_{t=0}^{\infty} \frac{ \sqrt{Var(\tilde{Z}_{t})}}{T_{t+1}}  +\sum_{t=0}^{\infty} \frac{ |E[\tilde{Z}_{t}-1]|}{T_{t+1}} \right]  \right. \\
&& +   \sum_{t=0}^{\infty} \frac{ \sqrt{\Tr(Var((\frac{\tilde{Z}_{t}}{2}\1-Z_t)(I+\A \D^{-1})))}}{T_{t+1}}\\
&&+ \left.\sum_{t=0}^{\infty} \frac{ \| E[(\frac{\tilde{Z}_{t}}{2}\1-Z_t)(I+\A \D^{-1})]\|}{T_{t+1}} \right\},
\end{eqnarray*}

where we have skipped some of the intermediate steps because they follow like before. Now, note that $(\frac{\tilde{Z}_{t}}{2}\1-Z_t)(I+\A \D^{-1}) =Z_tQ^{\prime}$, where $Q^{\prime}= (\frac{v}{2}\1-I)(I+\A \D^{-1})$. We have

\begin{equation*}
Q^{\prime}=
  \begin{pmatrix}
  0&0&-\frac{1}{2}&-1&-1 \\
  0&0&-1&-1&-1  \\
  0&0&1&1&0 \\
  0&0&0&1&1 \\
  0&0&\frac{1}{2}&0&1 \\
  \end{pmatrix}
 \end{equation*}
 
and  
\begin{equation*}
 e_1^{T}e_1P^{-1}Q^{\prime}=
  \begin{pmatrix}
  1&0&0&0&0 \\
  0&0&0&0&0   \\
  0&0&0&0&0 \\
  0&0&0&0&0 \\
  0&0&0&0&0 \\
   \end{pmatrix}
  \begin{pmatrix}
  
  -\frac{1}{2}&\frac{1}{2}&\frac{1}{6}&-\frac{1}{6}&\frac{1}{6} \\
  * & * & * & * & * \\
 \vdots  & \vdots & \vdots & \vdots & \vdots  \\
  %&&&& \\
  * & * & * & * & * \\
  \end{pmatrix}
  \begin{pmatrix}
  0&0&-\frac{1}{2}&-1&-1 \\
  0&0&-1&-1&-1  \\
  0&0&1&1&0 \\
  0&0&0&1&1 \\
  0&0&\frac{1}{2}&0&1 \\
  \end{pmatrix}
   =\0_{N \times N}.
 \end{equation*}

Clearly $\1 Q^{\prime} =0$. So, 

 $$\sum_{t=0}^{\infty} \frac{ \| E[(\frac{\tilde{Z}_{t}}{2}\1-Z_t)(I+\A \D^{-1})]\|}{T_{t+1}} < \infty \text{ and } \sum_{t=0}^{\infty} \frac{ \sqrt{\Tr(Var((\frac{\tilde{Z}_{t}}{2}\1-Z_t)(I+\A \D^{-1})))}}{T_{t+1}} < \infty.  $$
 
Also, the matrix $v\1$ satisfies $e_1^{T}e_1P^{-1}v\1 = \0_{N \times N}$. So,  $\sum_{t=0}^{\infty} \frac{ \sqrt{Var(\tilde{Z}_{t})}}{T_{t+1}} < \infty$. Combining all this with  \eqref{rate_of_dilda_expectation}, we have $\sum_{t=0}^{\infty} \frac{ |E[\tilde{Z}_{t}-1]|}{T_{t+1}}  < \infty$. So, $Z_t$ admits an almost sure limit. The rest of the arguments are identical to those in the proof of Theorem~\ref{syncFTSR}. Hence, we have that  $Z_tQ \xrightarrow{a.s.} \0$. 

%Therefore, we have proved that the limit points for this example are of the form given above.

%\color{red} Is $0$ a simple eigenvalue of $I+\A\D_{in}^{-1}$? Also, the Lemma only gives convergence in $L^2$.\color{black}
\end{example}

%%%%%%%%%%%%%%%%%%%%%%%%%%%%%%%%%%%%%%%%%%%%%%%%%%%%%%%%%%%%%%%%%%%%%%%%%%%%%%%%%%%%%%%%%%%%%%%%%%%%%%%%%%%%%%%%%%%%%%%%%%%%%%%%%%%%%%%%%%%%%%%%%%%%%%%%%%%%%%%%%%%%%%%%%%%%%%%%%%%%%%%%%%%%%%%%%%%%%%%%%%%%%%%%%%%%%%%%%%%%%%%%%%%%%%%%%%%%%%%%%%%%%%%%%%%%%%%%%%%%%%%%%%%%%%%%%%%%%%%%%%%%%%%%%%%%%%%%%%%%%%%%%%%%%%%%%%%%%%%%%%         Appendix  %%%%%%%%%%%%%%%%%%%%%%%%%%%%%%%%%%%%%%%%%%%%%%%%%%%%%%%%%%%%%%%%%%%%%%%%%%%%%%%%%%%%%%%%%%%%%%%%%%%%%%%%%%%%%%%%%%%%%%%%%%%%%%%%%%%%%%%%%%%%%%%%%%%%%%%%%%%%%%%%%%%%%%%%%%%%%%%%%%%%%%%%%%%%%%%%%%%%%%%%%%%%%%%%%%%%%%%%%%%%%%%%%%%%%%%%%%%%%%%%%%%%%%%%%%%%%%%%%%%%%%%%%%%%%%%%%%%%%%%%%%%%%%%%%%%%%%%%%%%%%%%%%%%%%%

\begin{appendix}

\section{Stochastic Approximation Theory}\label{appn}

%\begin{appendix}
%\section{Appendix}\label{appn}

%\subsection{Stochastic Approximation Theory}

Stochastic approximation has been used extensively in recent times to analyse the behaviour of urn processes (for instance, see \cite{MR3780393,laruelle2017addendum}). A recursion of the form
  \begin{equation}\label{basicSA}
 x_{t+1} = x_t + a_{t+1} \left( h(x_t) +  \Delta M_{t+1} +\epsilon_{t+1} \right), \qquad \forall t \geq 0
\end{equation}
  is called a stochastic approximation scheme if it satisfies the following conditions.
\begin{enumerate}
\item $\sum_{t\geq 1} a_t = \infty$ and $\sum_{t\geq 1} a_t^2 < \infty$.
\item $h: \R^d \to \R^d$ is a Lipschitz function. 
\item There exists a constant $\delta >0$, such that  $E\left [\|\Delta M_{t+1}\|^2 \vert \FF_t\right] \leq \delta (1+\|x_t\|^2)$, almost surely  $\forall t \geq 0$,  where $\FF_t = {\sigma (x_0, M_1, \ldots, M_t)}$.
\item $\sup_{t\geq 0} \|x_t\| < \infty$, almost surely.
\item $\{ \epsilon_t \}_{t\geq 1}$ is a  bounded sequence such that $\epsilon_t \to 0$, almost surely as $t\to \infty$.
\end{enumerate}	 
The main result (see Chapter 2 of \cite{borkar_stochastic_2008}) implies that the iterates of the recursion for $x_t \in \mathbb{R}^d$ satisfying  conditions (1)-(5)  converge almost surely to the stable limit points of the solutions of an ordinary differential equation given by $\dot{x}_t = h(x_t)$. Recall that $x^*$ is a stable limit point of the O.D.E. $\dot{x}_t = h(x_t)$ if the real parts of all the eigenvalues of the Jacobian $\frac{\partial h}{\partial z}(x^*)$ are non-positive. 

\

\noindent Further, suppose there exists a differentiable function $f: \mathbb{R}^d \to \mathbb{R}$ such that $h = -\nabla f$. The stochastic approximation scheme can then be written as 
\begin{equation} \label{SA2}
x_{t+1} = x_t - a_{t+1} ( \nabla f(x_t) + \beta_t),
\end{equation}
where $\{\beta_t \}_{t \geq 0}$ is an $\mathbb{R}^{d}$- valued stochastic process. This recursion is also known as the stochastic gradient descent. We now state the main convergence theorem from \cite{MR3315611}.  Let $S$ be the set of stationary points of $f(\cdot)$, i.e. $ S = \{ x \in \mathbb{R}^{d} : \nabla f(x) = 0 \}$. Sequence $\{\gamma_t \}_{t \geq 0}$ is defined by $\gamma _ 0 = 0$ and $\gamma_t = \sum_{i=0} ^{t-1} a_i$. 
For $ t \in (0, \infty)$ and $n \geq 0$, $\tilde {a}(n, t)$ is the integer defined as $\tilde{a}(n,t) = \max\{ k \geq n : \gamma_k - \gamma_n \leq t \}$. Then,

\begin{theorem} (Theorem 2.1 \cite{MR3315611}) \label{Tadic_SA} Suppose the following conditions hold. 
\begin{enumerate}
\item $\lim_{t \to \infty} a_t = 0$ and  $\sum_{t = 0} ^{\infty} a_t = \infty$.

\item There exists a real number $r \in (1, \infty)$ such that on $\{ \sup_{t \geq 0} \left|\left| x_t \right|\right| < \infty \}$,
\begin{equation*} 
P \left(\limsup_{n \to \infty} \max \limits_{ n \leq k < \tilde{a}(n,1)} \Big \| \sum_{i = n} ^{k} a_i {\gamma_i}^r \beta_i \Big \| < \infty \right) = 1. 
\end{equation*}
 \item For any compact set $Q \subset \mathbb{R}^{d}$ and any $c \in f(Q)$, there exist real numbers $\delta_{Q, c} \in (0,1], $
$M_{Q, c} \in [1, \infty)$ such that 
\begin{equation*} 
\left|f(x) - c \right| \leq M_{Q, c} {\left|\left|\nabla f(x) \right|\right|} ^{\mu_{Q, c}} 
\end{equation*}
for all $x \in Q$ satisfying $\left|f(x) - c \right| \leq \delta_{Q, c}.$
\end{enumerate}
Then, $x^* = \lim_{t \to \infty} x_t$ exists and satisfies $\nabla f(x^*) = \0 $ with probability $1$ on $\{ \sup_{t \geq 0}\| x_t  \| < \infty \}$.
\end{theorem}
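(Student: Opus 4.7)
The plan is to use $f$ as a Lyapunov function along the iterates and to combine an approximate descent inequality for $f(x_t)$ with the local Łojasiewicz-type gradient estimate in condition 3 to pin down a unique limit. Throughout, I would work on the event $E \coloneqq \{\sup_t\|x_t\|<\infty\}$ and fix a (random) compact set $Q$ containing the whole trajectory; on $Q$, $\nabla f$ is Lipschitz. A one-step Taylor expansion of $f$ along \eqref{SA2} gives
\begin{equation*}
f(x_{t+1}) = f(x_t) - a_{t+1}\|\nabla f(x_t)\|^2 - a_{t+1}\langle \nabla f(x_t),\beta_t\rangle + R_{t+1},
\end{equation*}
where $R_{t+1}$ is $O(a_{t+1}^2(\|\nabla f(x_t)\|^2+\|\beta_t\|^2))$ uniformly on $Q$.

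Next, I would telescope this identity over blocks $[n,\tilde a(n,1))$ of unit length in the $\gamma$-time scale and use condition 2 to absorb the cross term $\sum a_i\langle\nabla f(x_i),\beta_i\rangle$ into a vanishing error. The mechanism is Abel summation: writing $\sum_{i=n}^k a_i\beta_i$ as a partial-sum transform of $\sum_{i=n}^k a_i\gamma_i^r\beta_i$ against the bounded-variation weights $\gamma_i^{-r}$, the hypothesis in condition 2 together with $\gamma_i\to\infty$ yields
\begin{equation*}
\max_{n\le k<\tilde a(n,1)}\Big\|\sum_{i=n}^k a_i\beta_i\Big\| \longrightarrow 0 \quad\text{a.s.\ on }E.
\end{equation*}
Combined with a standard almost-supermartingale argument, this implies that $f(x_t)$ converges to some (random) limit $c$ and that the oscillation of $x_t$ over each $\gamma$-unit block tends to zero, so the limit set $L(\omega)$ of $\{x_t\}$ is a connected compact subset of $Q$ contained in $\{\nabla f=0\}\cap\{f=c\}$.

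Now I would upgrade ``connected limit set'' to ``single limit point'' using condition 3 with this $c$ and $Q$. The inequality $|f(x)-c|\le M\|\nabla f(x)\|^{\mu}$, rearranged as $\|\nabla f(x)\|\ge M^{-1/\mu}|f(x)-c|^{1/\mu}$, combined with the telescoped descent $f(x_t)-f(x_{t+1})\gtrsim a_{t+1}\|\nabla f(x_t)\|^2-\eta_{t+1}$ (with $\eta_t$ summable by the previous step), is exactly the Kurdyka–Łojasiewicz setup. The standard trick is to consider the concave function $\varphi(s)=s^{1-1/(2\mu\wedge 1)}$ (or a suitable $\mu$-dependent desingularizer), and to show that $\varphi(f(x_t)-c)$ decreases by at least a constant multiple of $a_{t+1}\|\nabla f(x_t)\|$ per step (modulo the error just bounded); summing telescopes to
\begin{equation*}
\sum_{t\ge 0} a_{t+1}\|\nabla f(x_t)\|<\infty \quad\text{a.s.\ on }E,
\end{equation*}
which together with the recursion and the noise bound gives $\sum \|x_{t+1}-x_t\|<\infty$. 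Completeness then yields $x^\star=\lim_t x_t$, and $\nabla f(x^\star)=0$ follows from continuity and $L(\omega)\subset\{\nabla f=0\}$.

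The main obstacle is the noise control under the unusually weak hypothesis 2: $\beta_t$ is neither assumed centred nor a martingale difference, only the weighted partial sums $\sum a_i\gamma_i^r\beta_i$ are required to be bounded on blocks of $\gamma$-length one. The delicate balance is to run the Kurdyka–Łojasiewicz chaining at a rate fast enough that the trajectory cannot ``hop'' between distinct connected components of $\{\nabla f=0\}\cap\{f=c\}$ before the desingularized potential has decreased, while keeping the noise term at each scale strictly smaller than a prescribed fraction of the descent gain $a_{t+1}\|\nabla f(x_t)\|^2$. Implementing this carefully—typically via an induction on entry times into shrinking neighbourhoods of the level set $\{f=c\}$—is the technical heart of the argument; everything else follows the classical Lyapunov/ODE template for stochastic approximation as in \cite{borkar_stochastic_2008}.
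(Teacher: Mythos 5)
The first thing to note is that the paper does not prove this statement at all: it is quoted verbatim as Theorem 2.1 of \cite{MR3315611} and invoked as a black box in Lemma~\ref{assumptions} and Theorems~\ref{Friedman2} and following, so there is no in-paper proof to compare yours against. Judged against the actual argument in the cited reference, your sketch identifies the right architecture: a Lyapunov descent inequality for $f(x_t)$, a summation-by-parts step converting the hypothesis on the weighted partial sums $\sum_i a_i\gamma_i^r\beta_i$ (with $\gamma_i^{-r}\to 0$) into smallness of $\sum_i a_i\beta_i$ over blocks of unit $\gamma$-length, and a \L{}ojasiewicz-type desingularization to upgrade ``connected limit set inside $\{\nabla f=0\}$'' to a single limit point. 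That is indeed how Tadi\'c proceeds.

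As a proof, however, the proposal has genuine gaps, some of which you acknowledge. First, the cross term $\sum_i a_i\langle\nabla f(x_i),\beta_i\rangle$ is not controlled by the block bound on $\sum_i a_i\beta_i$ alone: since $\nabla f(x_i)$ moves with $i$, one needs a second Abel summation against the increments $\nabla f(x_{i+1})-\nabla f(x_i)$, which are themselves of order $a_{i+1}(\|\nabla f(x_i)\|+\|\beta_i\|)$, and closing this loop with no centering or moment assumption on $\beta_t$ is the technical core of the theorem; you name it as ``the main obstacle'' but do not execute it. Second, the proposed desingularizer $\varphi(s)=s^{1-1/(2\mu\wedge 1)}$ is not the right function: for $|f(x)-c|\le M\|\nabla f(x)\|^{\mu}$, i.e.\ $\|\nabla f\|\gtrsim |f-c|^{1/\mu}$, the standard choice is $\varphi(s)=s^{1-1/\mu}$ with $\mu\in(1,2]$, whereas your exponent degenerates to $0$ whenever $\mu\ge 1/2$. (Incidentally, the statement as transcribed in the paper quantifies $\delta_{Q,c}$ and $M_{Q,c}$ but not the exponent $\mu_{Q,c}$ appearing in the inequality --- a transcription slip inherited from the source --- and your sketch silently carries this over.) None of this makes the strategy wrong, but what you have written is a roadmap to the proof in \cite{MR3315611} rather than a proof; for the purposes of this paper the correct move is simply to cite the result, as the authors do.
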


\end{appendix}

\section*{Acknowledgment}The first author was supported by IISER Mohali doctoral fellowship. The second author was supported in part by DST-INSPIRE fellowship and SERB-MATRICS grant.

\bibliographystyle{unsrt}

\bibliography{References}

\begin{thebibliography}{10}

\bibitem{MR3780393}
Nabil Lasmar, C\'{e}cile Mailler, and Olfa Selmi.
\newblock Multiple drawing multi-colour urns by stochastic approximation.
\newblock {\em J. Appl. Probab.}, 55(1):254--281, 2018.

\bibitem{MR3654806}
Antar Bandyopadhyay and Debleena Thacker.
\newblock P\'{o}lya urn schemes with infinitely many colors.
\newblock {\em Bernoulli}, 23(4B):3243--3267, 2017.

\bibitem{MR3666709}
Markus Kuba and Hosam~M. Mahmoud.
\newblock Two-color balanced affine urn models with multiple drawings.
\newblock {\em Adv. in Appl. Math.}, 90:1--26, 2017.

\bibitem{MR3915423}
Aguech Rafik, Lasmar Nabil, and Selmi Olfa.
\newblock A generalized urn with multiple drawing and random addition.
\newblock {\em Ann. Inst. Statist. Math.}, 71(2):389--408, 2019.

\bibitem{MR2203815}
May-Ru Chen and Ching-Zong Wei.
\newblock A new urn model.
\newblock {\em J. Appl. Probab.}, 42(4):964--976, 2005.

\bibitem{MR3126664}
Markus Kuba, Hosam Mahmoud, and Alois Panholzer.
\newblock Analysis of a generalized {F}riedman's urn with multiple drawings.
\newblock {\em Discrete Appl. Math.}, 161(18):2968--2984, 2013.

\bibitem{MR3217785}
Paolo Dai~Pra, Pierre-Yves Louis, and Ida~G. Minelli.
\newblock Synchronization via interacting reinforcement.
\newblock {\em J. Appl. Probab.}, 51(2):556--568, 2014.

\bibitem{crimaldi2016fluctuation}
Irene Crimaldi, Paolo Dai~Pra, and Ida~Germana Minelli.
\newblock Fluctuation theorems for synchronization of interacting p{\'o}lya’s
  urns.
\newblock {\em Stochastic processes and their applications}, 126(3):930--947,
  2016.

\bibitem{aletti2017interacting}
Giacomo Aletti and Andrea Ghiglietti.
\newblock Interacting generalized {F}riedman’s urn systems.
\newblock {\em Stochastic Processes and their Applications}, 127(8):2650--2678,
  2017.

\bibitem{sahasrabudhe2016synchronization}
Neeraja Sahasrabudhe.
\newblock Synchronization and fluctuation theorems for interacting {F}riedman
  urns.
\newblock {\em Journal of Applied Probability}, 53(4):1221--1239, 2016.

\bibitem{kaur2023interacting}
Gursharn Kaur and Neeraja Sahasrabudhe.
\newblock Interacting urns on a finite directed graph.
\newblock {\em Journal of Applied Probability}, 60(1):166--188, 2023.

\bibitem{MR3269167}
Jun Chen and Cyrille Lucas.
\newblock A generalized {P}\'{o}lya's urn with graph based interactions:
  convergence at linearity.
\newblock {\em Electron. Commun. Probab.}, 19:no. 67, 13, 2014.

\bibitem{MR3346459}
Michel Bena\"{\i}m, Itai Benjamini, Jun Chen, and Yuri Lima.
\newblock A generalized {P}\'{o}lya's urn with graph based interactions.
\newblock {\em Random Structures Algorithms}, 46(4):614--634, 2015.

\bibitem{MR2079914}
Anna~Maria Paganoni and Piercesare Secchi.
\newblock Interacting reinforced-urn systems.
\newblock {\em Adv. in Appl. Probab.}, 36(3):791--804, 2004.

\bibitem{crimaldi2019synchronization}
Irene Crimaldi, Paolo Dai~Pra, Pierre-Yves Louis, and Ida~G Minelli.
\newblock Synchronization and functional central limit theorems for interacting
  reinforced random walks.
\newblock {\em Stochastic processes and their applications}, 129(1):70--101,
  2019.

\bibitem{10.1214/17-AAP1296}
Giacomo Aletti, Irene Crimaldi, and Andrea Ghiglietti.
\newblock {Synchronization of reinforced stochastic processes with a
  network-based interaction}.
\newblock {\em The Annals of Applied Probability}, 27(6):3787 -- 3844, 2017.

\bibitem{10.3150/18-BEJ1092}
Giacomo Aletti, Irene Crimaldi, and Andrea Ghiglietti.
\newblock {Networks of reinforced stochastic processes: Asymptotics for the
  empirical means}.
\newblock {\em Bernoulli}, 25(4B):3339 -- 3378, 2019.

\bibitem{MR1918746}
Xiao-Dong Zhang and Jiong-Sheng Li.
\newblock The {L}aplacian spectrum of a mixed graph.
\newblock {\em Linear Algebra Appl.}, 353:11--20, 2002.

\bibitem{MR3315611}
Vladislav~B. Tadi\'{c}.
\newblock Convergence and convergence rate of stochastic gradient search in the
  case of multiple and non-isolated extrema.
\newblock {\em Stochastic Process. Appl.}, 125(5):1715--1755, 2015.

\bibitem{godsil2001}
Chris Godsil and Gordon Royle.
\newblock {\em The Laplacian of a Graph}, pages 279--306.
\newblock Springer New York, New York, NY, 2001.

\bibitem{Metivier+1982}
Michel Métivier.
\newblock {\em Semimartingales: A Course on Stochastic Processes}.
\newblock De Gruyter, Berlin, New York, 1982.

\bibitem{MR3582813}
Li-Xin Zhang.
\newblock Central limit theorems of a recursive stochastic algorithm with
  applications to adaptive designs.
\newblock {\em Ann. Appl. Probab.}, 26(6):3630--3658, 2016.

\bibitem{laruelle2017addendum}
Sophie Laruelle and Gilles Pag{\`e}s.
\newblock Addendum and corrigendum to ``randomized urn models revisited using
  stochastic approximation".
\newblock {\em The Annals of Applied Probability}, pages 1296--1298, 2017.

\bibitem{borkar_stochastic_2008}
Vivek~S. Borkar.
\newblock {\em Stochastic approximation: a dynamical systems viewpoint}.
\newblock Cambridge University Press ; Hindustan Book Agency, Cambridge, UK :
  New York : New Delhi, 2008.
\newblock OCLC: ocn231580915.

\end{thebibliography}

\end{document}